\newtheorem{prop}{Proposition}
\newtheorem{lem}{Lemma}
\newtheorem{theor}{Theorem}
\newtheorem{quest}{Question}
\theoremstyle{definition}
\newtheorem{de}{Definition}
\newtheorem{ex}{Example}
\theoremstyle{remark}
\newtheorem {re}{Remark}
\DeclareMathOperator{\Aut}{Aut}
\def\Ker{{\rm Ker}\,}
\def\HH{{\mathbb H}}
\def\GA{{\mathbb G}_a}
\def\VV{{\mathbb V}}
\def\GG{{\mathbb G}}
\def\CC{{\mathbb C}}
\def\KK{{\mathbb K}}
\def\LL{{\mathbb L}}
\def\TT{{\mathbb T}}
\def\ZZ{{\mathbb Z}}
\begin{document}
\date{}
\title{On rigidity of trinomial hypersurfaces and factorial trinomial varieties}
\author{Sergey Gaifullin}
\address{Lomonosov Moscow State University, Faculty of Mechanics and Mathematics, Department of Higher Algebra, Leninskie Gory 1, Moscow, 119991 Russia; \linebreak and \linebreak
National Research University Higher School of Economics, Faculty of Computer Science, Kochnovskiy Proezd 3, Moscow, 125319 Russia}
\email{sgayf@yandex.ru}

\subjclass[2010]{Primary 14R20,14J50;\  Secondary 13A50, 14L30}

\keywords{Affine variety, locally nilpotent derivation, graded algebra, torus action, trinomial}

\thanks{The author was supported by the Foundation for the Advancement of Theoretical Physics and Mathematics ``BASIS''}

\maketitle

\begin{abstract}
Trinomial varieties are affine varieties given by some special system of equations consisting of polynomials with three terms. Such varieties are total coordinate spaces of normal rational varieties with torus action of complexity one. For an affine variety $X$ we consider the subgroup $\mathrm{SAut}(X)$ of the automorphism group generated by all algebraic subgroups isomorphic to the additive group of the ground field. An affine variety $X$ is rigid if $\mathrm{SAut}(X)$ is trivial. In opposite an affine variety is flexible if $\mathrm{SAut}(X)$ acts transitively on the regular locus.  Arzhantsev proved a criterium for a factorial trinomial hypersurface to be rigid. We give two generalizations of Arzhantsev's result: a criterium for an arbitrary trinomial hypersurface to be rigid and a criterium for a factorial trinomial variety to be rigid. Also a sufficient condition for a trinomial hypersurface to be flexible is obtained. 
\end{abstract}

\section{Introduction}

Let $\KK$ be an algebraically closed field of characteristic zero and $\GA$ be its additive group. Assume $X$ is an affine algebraic variety over $\KK$. One of approaches to investigate the group of regular automorphisms of $X$ is to study $\GA$-actions on $X$. 

Let $A$ be a commutative associative algebra over $\KK$.
A linear mapping $\partial\colon A\rightarrow A$ is called a {\it derivation} if it satisfies the Leibniz rule $\partial(ab)=a\partial(b)+b\partial(a)$ for all $a,b \in A$.
A~derivation is called {\it locally nilpotent} (LND) if for any $a\in A$ there is a positive integer $n$ such that $\partial^n(a)=0$.
Exponential mapping $\partial\mapsto\{\exp(t\partial)\mid t\in\KK\}$ gives a correspondence between LNDs and algebraic
subgroups in $\mathrm{Aut}(A)$ isomorphic to $\GA$. This correspondence provides the technique for investigation of $\GA$-actions.

There are two opposite situations. If $X$ does not admit non-trivial $\GG_a$-action, the variety $X$ is called {\it rigid}. A lot of examples of rigid varieties are given in \cite{CM, FM, FMJ}, see also \cite[Chapter 10]{Fr}. 
A rigid variety has the unique maximal torus in its automorphism group~\cite{AG}. This fact sometimes allows to describe all automorphisms of $X$.  For example if $X$ is a rigid trinomial hypersurface the automorphism group of $X$ is a finite extension of the maximal torus.  
If for every regular point $x\in X$ the tangent space $T_{x}X$ is spanned by tangent vectors to 
$\GG_a$-orbits for various regular $\GG_a$-actions, then $X$ is called {\it flexible}. Flexible varieties were investigated in \cite{AFKKZ}. It was proved that flexibility is equivalent to transitivity and infinitely transitivity of the group of special automorphism on the set of regular points of $X$, see Theorem~\ref{aaa}.
Flexibility of some classes of affine varieties is proved in \cite{AFKKZ, AKZ, GSh, Pe}. In some sense flexibility of a variety means that there are a lot of $\GG_a$-actions on $X$. That is an opposite situation to rigidity of $X$.  We call {\it intermediate} such varieties that are neither rigid nor flexible.

In this paper we investigate the class of {\it trinomial varieties}. These are varieties given by systems of polynomial equations of the form 
\begin{equation}\label{eeq}
T_{01}^{l_{01}}\ldots T_{0n_0}^{l_{0n_0}}+T_{11}^{l_{11}}\ldots T_{1n_1}^{l_{1n_1}}+T_{21}^{l_{21}}\ldots T_{2n_2}^{l_{2n_2}}=0, n_0\geq 0, n_1,n_2\geq 1,
\end{equation}
see Definition \ref{trv} and \cite[Construction~1.1]{HW}. Every trinomial variety admits an algebraic torus action of complexity one, i.e. the codimension of a generic orbit equals one. The class of trinomial varieties is interesting since every normal, rational variety $X$ with only constant invertible functions, finitely generated divisor class group and an  algebraic torus action of complexity one can be obtained as a quotient of a trinomial variety via action of a diagonasible group, see \cite[Corollary~1.9]{HW}. This trinomial variety is the total coordinate space of $X$. More information on total coordinate spaces and Cox rings one can find in \cite{ADHL}.

The simplest case of a trinomial variety is a {\it trinomial hypersurface} given by a single equation of the form (\ref{eeq}) in $\KK^{n_0+n_1+n_2}$. A trinomial hypersurface is factorial if and only if one of the following holds
\begin{itemize} 
\item $n_0=0$ and $\gcd(l_{i1},\ldots,l_{in_i})=1$ for  $i=1,2$  
\item $n_0\geq 1$ and the numbers $d_i=\gcd(l_{i1},\ldots,l_{in_i})$ are pairwise coprime. 
\end{itemize}
Arzhantsev~\cite{Ar} gives a criterium for a factorial trinomial hypersurface $X$ with $n_0\geq 1$ to be rigid. He proves that $X$ is rigid if and only if all $l_{ij}\geq 2$. It is easy to see, that for arbitrary trinomial hypersurface $X$ if there is $l_{ij}=1$, then $X$ is not rigid. But the condition $l_{ij}>1$ for all  $i$ and $j$ is not sufficient for a non-factorial trinomial hypersurface to be rigid. We prove a criterium for an arbitrary trinomial hypersurface to be rigid, see Theorem~\ref{rt}.  Every hypersurface, which is not rigid and for which $l_{ij}>1$ for all $i$ and $j$, has the following form
\begin{equation}\label{fff}
\VV\left(T_{01}^2T_{02}^{2m_{02}}\ldots T_{0n_0}^{2m_{0n_0}}+T_{11}^2T_{12}^{2m_{12}}\ldots T_{1n_1}^{2m_{1n_1}}+T_{21}^{l_{21}}\ldots T_{2n_2}^{l_{2n_2}}\right).
\end{equation}
Note that we do not use Arzhantsev's result and thus reprove it as a particular case of Theorem~\ref{rt}. In \cite{GZ} and \cite{Z} all finely homogeneous (i.e. homogeneous with respect to the finest graduation such that all $T_{ij}$ are homogeneous) LNDs of  the algebra of regular functions on a trinomial hypersurface with $n_0\geq 1$ are described.  This also gives a reproving of the result~\cite{Ar} by methods similar to methods of the present paper. But these results do not imply Theorem~\ref{rt} since in non-factorial case existence of an LND does not imply existence of a finely homogeneous LND.

In Section~\ref{sec} we investigate rigidity of trinomial varieties. We give the second generalization of Arzhantsev's result to the case of factorial trinomial varieties. We prove that a factorial trinomial variety is not rigid if and only if in each monomial except at worst one for trinomial varieties of Type 1 and except at worst two for trinomial varieties of Type 2 there is a pair $(i,j)$ such that $l_{ij}=1$, see Theorem~\ref{ridfac}.

In the last two sections we investigate nonrigid trinomial hypersurfaces. A trinomial hypersurface $X$ admits an action of algebraic torus $T$ of complexity one. For nonrigid trinomial hypersurfaces we can explicitly construct some $T$-homogeneous LNDs, see Lemmas \ref{nenul} and \ref{dve}. This allows to prove a sufficient condition of trinomial hypersurface to be flexible, see Theorem~\ref{fltr}. For two classes of trinomial hypersurfaces we prove that they are intermediate, see Propositions~\ref{m} and~\ref{mm}. 

The author is grateful to Ivan Arzhantsev, Roman Budylin, Alexander Perepechko, and Yulia Zaitseva for useful discussions.

\section{Derivations}

Let $F$ be an abelian group.
An algebra $A$ is called {\it $F$-graded} if $$A=\bigoplus_{f\in F}A_f,$$ and $A_fA_g\subset A_{f+g}$.
A derivation $\partial\colon A\rightarrow A$ is called {\it $F$-homogeneous of degree $f_0\in F$} if for every $a\in A_f$, we have $\partial(a)\in A_{f+f_0}$.

Let A be a finitely generated $\ZZ$-graded algebra. Let $\partial$ be a derivation of $A$. It is easy to see, that there exists a decomposition $\partial=\sum_{i=l}^k\partial_i$ into a sum of homogeneous derivations, $\deg \partial_i=i$.  
The following lemma can be found in \cite{Re}, see also \cite{FZ}.
\begin{lem}\label{fl}
Let $\partial$ be an LND and $\partial=\sum_{i=l}^k\partial_i$, where $\partial_i$ is a homogeneous derivation of degree $i$. Then the extreme summands  $\partial_l$ and $\partial_k$ are LND.
\end{lem}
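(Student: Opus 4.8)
The plan is to use the standard trick of looking at the "leading term" behaviour of $\partial$ with respect to the $\ZZ$-grading. First I would fix $a \in A$ and decompose it into homogeneous components $a = \sum_j a_j$ with $a_j \in A_j$. Since $\partial = \sum_{i=l}^k \partial_i$ and each $\partial_i$ raises degree by exactly $i$, applying $\partial^n$ and collecting the contributions that land in the lowest possible degree shows that the component of $\partial^n(a)$ of degree $(\min_j j) + nl$ is exactly $\partial_l^n(a_{j_0})$, where $a_{j_0}$ is the nonzero homogeneous component of $a$ of minimal degree. Because $\partial$ is an LND, $\partial^n(a) = 0$ for $n \gg 0$, so in particular this lowest-degree component vanishes, i.e. $\partial_l^n(a_{j_0}) = 0$. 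Running this over all $a$ (equivalently, over all homogeneous $a_{j_0}$, since homogeneous elements span $A$) gives that $\partial_l$ is locally nilpotent on a spanning set; I would then remark that local nilpotency on homogeneous elements extends to all of $A$ because $A$ is finitely generated (so one can bound the required $n$ uniformly on generators and use the Leibniz rule), or simply because the set of elements killed by some power of a fixed derivation is closed under addition when the derivation is homogeneous. The argument for $\partial_k$ is identical after replacing "minimal degree" by "maximal degree" and "lowest-degree component" by "highest-degree component".

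The one genuinely careful point — and the main obstacle — is verifying that the extreme homogeneous component of $\partial^n(a)$ really is $\partial_l^n$ (resp. $\partial_k^n$) applied to the extreme component of $a$, with no cancellation or extra cross terms interfering. This is a bookkeeping claim: expanding $\partial^n = \bigl(\sum_i \partial_i\bigr)^n$ as a sum of words $\partial_{i_n}\cdots\partial_{i_1}$, each such word shifts degree by $i_1 + \cdots + i_n \ge nl$, with equality only for the word $\partial_l^n$; so the degree-$(j_0 + nl)$ part of $\partial^n(a)$ receives a contribution only from $\partial_l^n(a_{j_0})$, and there is exactly one such contribution, hence no cancellation. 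I would state this as the key computation and not belabour the indexing.

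Finally, to conclude that local nilpotency of $\partial_l$ on homogeneous elements implies $\partial_l$ is an LND: take homogeneous generators $f_1, \dots, f_s$ of $A$, choose $N$ with $\partial_l^N(f_t) = 0$ for all $t$, and observe that by the Leibniz rule $\partial_l^{(s-1)(N-1)+1}$ kills every monomial in the $f_t$, hence every element of $A$. This is routine and I would only sketch it. Altogether the proof is short; the only thing worth writing out in full is the degree-shift computation identifying the extreme component of $\partial^n(a)$.
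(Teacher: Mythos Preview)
The paper does not actually prove this lemma; it merely cites \cite{Re} and \cite{FZ}. Your argument is correct and is the standard one appearing in those references: identify the lowest (resp.\ highest) homogeneous component of $\partial^n(a)$ as $\partial_l^n$ (resp.\ $\partial_k^n$) applied to the extreme homogeneous component of $a$, and use vanishing of $\partial^n(a)$ for large $n$.

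One minor remark on your last paragraph: it is unnecessary, and the bound you wrote is not quite right. Once you know $\partial_l$ is locally nilpotent on every homogeneous element you are already done, because any $a\in A$ is by definition a \emph{finite} sum of homogeneous components $a_j$; choosing $n_j$ with $\partial_l^{n_j}(a_j)=0$ and setting $n=\max_j n_j$ gives $\partial_l^n(a)=0$. No appeal to finite generation or the Leibniz rule is needed. (Incidentally, the exponent $(s-1)(N-1)+1$ you quote would not kill an arbitrary monomial in the $f_t$: the required exponent grows with the total degree of the monomial, so there is no uniform bound of that shape. This does not matter, since local nilpotency asks only for an $n$ depending on the element.) You already noted the simpler route in passing; just drop the redundant Leibniz argument.
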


\begin{re}
Further when we wright $\partial=\sum_{i=l}^k\partial_i$, we assume that $\partial_l\neq 0$ and $\partial_k\neq 0$.
\end{re}

Lemma \ref{fl} implies the following.

\begin{lem}\label{flz}
If $A$ admits a nonzero LND, then $A$ admits a nonzero $\ZZ$-homogeneous LND. 
\end{lem}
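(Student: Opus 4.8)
The statement follows immediately from Lemma \ref{fl} by the following argument. Suppose $A$ admits a nonzero LND $\partial$. Since $A$ is finitely generated and $\ZZ$-graded, we may write $\partial=\sum_{i=l}^k\partial_i$ as a sum of $\ZZ$-homogeneous derivations with $\partial_l\neq 0$ and $\partial_k\neq 0$, as discussed in the paragraph preceding Lemma \ref{fl}. By Lemma \ref{fl}, the extreme summand $\partial_l$ (equivalently $\partial_k$) is again an LND. Since $\partial_l\neq 0$ by our convention on the indexing, $\partial_l$ is a nonzero $\ZZ$-homogeneous LND of $A$, which is exactly what we need.

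The only point that requires a word of justification is the existence of the homogeneous decomposition $\partial=\sum_{i=l}^k\partial_i$ itself, together with the fact that each component $\partial_i$ is a \emph{derivation} (not merely a homogeneous linear map). I would spell this out: for a homogeneous element $a\in A_f$, write $\partial(a)=\sum_g \partial(a)_g$ in terms of its homogeneous components, and \emph{define} $\partial_i(a)$ to be $\partial(a)_{f+i}$, extending by linearity to all of $A$. Finiteness of the index range follows from finite generation of $A$: the generators are spread over finitely many degrees, so only finitely many shifts $i$ can occur. That each $\partial_i$ satisfies the Leibniz rule is a direct check on products of homogeneous elements: comparing the degree-$(f+g+i)$ components on both sides of $\partial(ab)=a\partial(b)+b\partial(a)$ for $a\in A_f$, $b\in A_g$ gives precisely $\partial_i(ab)=a\partial_i(b)+b\partial_i(a)$.

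There is essentially no obstacle here; the lemma is a formal consequence of Lemma \ref{fl}, whose proof (the genuinely nontrivial statement, about the LND property of the extreme components) is cited from \cite{Re} and \cite{FZ} and not reproved. The whole content of Lemma \ref{flz} is the observation that homogenization cannot kill a nonzero derivation, since at least the top and bottom components survive by definition.
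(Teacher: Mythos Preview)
Your argument is correct and matches the paper's approach exactly: the paper simply states that Lemma~\ref{fl} implies Lemma~\ref{flz}, and your proposal spells out precisely why. The additional justification you give for the existence and finiteness of the homogeneous decomposition, and for the fact that each $\partial_i$ is a derivation, is accurate and more detailed than what the paper provides.
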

Let $F\cong\ZZ^n$. Assume $A$ is a finitely generated $F$-graded algebra. Applying the result of Lemma~\ref{flz} $n$ times we obtain the following lemma.
\begin{lem}\label{flza}
 An $F$-graded algebra $A$ admits an LND if and only if $A$ admits an $F$-homogeneous LND. 
\end{lem}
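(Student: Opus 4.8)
The plan is to prove the two directions separately, with the substantive content lying in a single direction. The direction "$F$-homogeneous LND $\Rightarrow$ LND" is immediate, since an $F$-homogeneous LND is in particular an LND. For the converse, I would argue by induction on $n$, where $F\cong\ZZ^n$, using Lemma \ref{flz} as the engine of the induction step and carefully tracking the grading structure that survives at each stage.

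First I would set up the base case. When $n=1$ the statement is precisely Lemma \ref{flz}: if $A$ admits a nonzero LND, it admits a nonzero $\ZZ$-homogeneous one. For the inductive step, suppose the result holds for $\ZZ^{n-1}$-graded algebras, and let $A=\bigoplus_{f\in\ZZ^n}A_f$ be $\ZZ^n$-graded with a nonzero LND $\partial$. Write $\ZZ^n=\ZZ^{n-1}\oplus\ZZ$ and regard $A$ first as a $\ZZ$-graded algebra via the last coordinate, i.e. $A=\bigoplus_{k\in\ZZ}\left(\bigoplus_{f'\in\ZZ^{n-1}}A_{(f',k)}\right)$. By Lemma \ref{flz} (applied via the decomposition $\partial=\sum_{i=l}^{k}\partial_i$ into $\ZZ$-homogeneous parts and Lemma \ref{fl}, which guarantees the extreme parts are LND), the algebra $A$ admits a nonzero LND $\partial'$ that is homogeneous with respect to this single $\ZZ$-grading. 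The key point to verify is that $\partial'$, being homogeneous for the last-coordinate grading, respects the refinement: each graded piece $\bigoplus_{f'\in\ZZ^{n-1}}A_{(f',k)}$ is itself $\ZZ^{n-1}$-graded, and $\partial'$ maps the whole algebra compatibly, so $A$ together with $\partial'$ can be viewed as a $\ZZ^{n-1}$-graded algebra (via the first $n-1$ coordinates, forgetting nothing because homogeneity in the last coordinate is already secured) carrying a nonzero LND. Applying the inductive hypothesis yields a nonzero LND that is $\ZZ^{n-1}$-homogeneous, and one checks that the process preserves homogeneity in the last coordinate as well, so the resulting derivation is $\ZZ^n=F$-homogeneous.

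The main obstacle, and the step deserving the most care, is the bookkeeping in the induction step: one must ensure that applying Lemma \ref{flz} with respect to one $\ZZ$-factor does not destroy homogeneity already achieved with respect to the others. Concretely, when we extract the extreme homogeneous component $\partial_k$ of an LND via Lemma \ref{fl}, we need this component to remain homogeneous for the gradings handled in earlier steps. This follows because if $\partial$ is homogeneous of some degree $e\in\ZZ^{n-1}$ for a coarser grading, then in the decomposition $\partial=\sum_i\partial_i$ by a further independent $\ZZ$-grading, each $\partial_i$ is automatically still homogeneous of degree $e$ for the coarser grading — the two grading directions are independent, so refining one does not disturb homogeneity in the other. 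Once this compatibility is spelled out, the induction closes and the lemma follows. I would present this as a short paragraph rather than a fully formal induction, since the paper already signals ("Applying the result of Lemma~\ref{flz} $n$ times") that a routine iteration is intended.
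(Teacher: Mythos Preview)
Your proposal is correct and follows exactly the route indicated in the paper, which merely says ``Applying the result of Lemma~\ref{flz} $n$ times we obtain the following lemma.'' You have simply spelled out the bookkeeping that makes this iteration legitimate --- namely that taking an extreme $\ZZ$-homogeneous component with respect to one coordinate preserves homogeneity already achieved in the other coordinates --- which is precisely the content the paper leaves to the reader.
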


Let $A=\KK[X]$ be the algebra of regular functions on an affine algebraic variety $X$. The group $\mathfrak{X}(T)$ of characters of $n$-dimensional torus $T=(\KK^\times)^n$ is isomorphic to $\ZZ^n$. Then $\ZZ^n$-gradings on $\KK[X]$ are in bijection to regular $T$-actions on $X$. 
If we have a $\mathfrak{X}(T)$-homogeneous derivation on $\KK[X]$, we call it a {\it $T$-homogeneous} derivation.

Let $F$ and $S$ be abelian groups. Assume $A=\bigoplus_{f\in F}A_f$ is an $F$-graded algebra. Let $\pi\colon F\rightarrow S$ be a homomorphism. 
Then $A$ can be considered as $S$-graded algebra by the following way  
$$A=\bigoplus_{s\in S}A_s, \qquad\text{where}\ A_s=\bigoplus_{\pi(f)=s}A_f.$$ 

The proof of the following lemma can be found in \cite{G}.

\begin{lem}  \label{grub}
Assume $\partial$ is an $F$-homogeneous derivation of degree $f_0$. Then $\partial$ is an $S$-homogeneous derivation of degree $s_0=\pi(f_0)$.
\end{lem}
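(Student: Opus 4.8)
The plan is simply to unwind the two grading definitions and use linearity of $\partial$. First I would fix $s\in S$ and take an arbitrary $a\in A_s$. By the definition of the $S$-grading induced by $\pi$, we have $A_s=\bigoplus_{\pi(f)=s}A_f$, so $a$ can be written as a finite sum $a=\sum_{f}a_f$ with $a_f\in A_f$ and each index $f$ satisfying $\pi(f)=s$; finiteness of this sum is automatic since $a$ lies in $A=\bigoplus_{f\in F}A_f$.

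Next I would apply $\partial$ and use that it is additive: $\partial(a)=\sum_f\partial(a_f)$. Since $\partial$ is $F$-homogeneous of degree $f_0$, each term satisfies $\partial(a_f)\in A_{f+f_0}$. Because $\pi$ is a group homomorphism, $\pi(f+f_0)=\pi(f)+\pi(f_0)=s+s_0$, so $A_{f+f_0}$ is one of the summands of $A_{s+s_0}=\bigoplus_{\pi(g)=s+s_0}A_g$. Hence $\partial(a_f)\in A_{s+s_0}$ for every $f$, and therefore $\partial(a)=\sum_f\partial(a_f)\in A_{s+s_0}$. As $a\in A_s$ was arbitrary, this is precisely the assertion that $\partial$ is $S$-homogeneous of degree $s_0=\pi(f_0)$.

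There is no real obstacle in this argument; it is a direct verification. The only point requiring a moment's care is the bookkeeping of the direct-sum decompositions --- that the expression $a=\sum_f a_f$ is finite and that the $S$-component $A_{s+s_0}$ genuinely contains each $A_{f+f_0}$ with $\pi(f)=s$ --- but both follow immediately from the definitions of $F$-grading and of the pushed-forward $S$-grading. This lemma is the elementary tool that lets one transport homogeneity of a derivation along quotients of the grading group, and it is exactly what is used (together with Lemma~\ref{flza}) to pass between finely homogeneous LNDs and $T$-homogeneous LNDs for the various tori $T$ acting on a trinomial variety.
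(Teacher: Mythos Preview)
Your proof is correct; it is the straightforward direct verification from the definitions. The paper does not include its own proof of this lemma but simply refers to~\cite{G}, so there is nothing further to compare.
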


An affine algebraic variety $X$ is called {\it rigid} if the algebra $\KK[X]$ admits no nonzero LNDs.
An affine algebraic variety $X$ is called {\it flexible} if for every regular point $x\in X^{\mathrm{reg}}$, the tangent space $T_{x}X$ is spanned by tangent vectors to 
$\GA$-orbits for various regular $\GA$-actions.
The subgroup of $\Aut(X)$ generated by all algebraic subgroups isomorphic to $\GA$ is called {\it subgroup of special automorphisms}. We denote it by $\mathrm{SAut}(X)$.

Let a group $G$ act on a set $X$. This action is called {\it $m$-transitive} if for every two $m$-tuples $(a_1,\ldots, a_m)$ and $(b_1, \ldots, b_m)$, where $a_i\neq a_j$ and $b_i\neq b_j$ if $i\neq j$, there is an element $g$ in $G$ such that for all $i$ we have $g\cdot a_i=b_i$.

If an action is $m$-transitive for every positive integer $m$, then it is called {\it infinitely transitive}.

One of the main results on flexible varieties is the following.
\begin{theor}\label{aaa}\cite[Theorem~0.1]{AFKKZ}
For an irreducible affine variety $X$ of dimension $\geq 2$, the following
conditions are equivalent.

(i) The group $\mathrm{SAut}(X)$ acts transitively on $X^{\mathrm{reg}}$,

(ii) The group $\mathrm{SAut}(X)$ acts infinitely transitively on $X^{\mathrm{reg}}$,

(iii) The variety $X$ is flexible.
\end{theor}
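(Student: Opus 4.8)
The plan is to prove the cycle (ii) $\Rightarrow$ (i) $\Rightarrow$ (iii) $\Rightarrow$ (i) $\Rightarrow$ (ii). The implication (ii) $\Rightarrow$ (i) is immediate, since $1$-transitivity is transitivity. The real work is in upgrading transitivity to flexibility, and then flexibility back to infinite transitivity.

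For the equivalence (i) $\Leftrightarrow$ (iii), fix $x\in X^{\mathrm{reg}}$ and let $V_x\subseteq T_xX$ be the linear span of the velocity vectors at $x$ of all regular $\GA$-actions on $X$; flexibility means $V_x=T_xX$ for every such $x$. First I would check that the orbit $O_x=\mathrm{SAut}(X)\cdot x$ is a locally closed smooth subvariety of $X^{\mathrm{reg}}$ with $T_xO_x=V_x$: since $\mathrm{SAut}(X)$ is generated by the one-parameter unipotent subgroups $\exp(t\partial)$, moving along products of such subgroups and differentiating produces exactly the vectors of $V_x$, and a standard ind-group orbit argument shows nothing more. Hence $O_x$ is open in $X^{\mathrm{reg}}$ iff $V_x=T_xX$. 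If (iii) holds, every orbit is open; as $X$ is irreducible, $X^{\mathrm{reg}}$ is connected, forcing a single orbit, which is (i). Conversely (i) makes the unique orbit all of $X^{\mathrm{reg}}$, hence open, hence $V_x=T_xX$ everywhere, i.e. (iii).

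For (i) $\Rightarrow$ (ii) I would induct on $m$, showing $m$-transitivity on $X^{\mathrm{reg}}$; the base case $m=1$ is (i). Given pairwise distinct tuples $(a_1,\dots,a_m)$, $(b_1,\dots,b_m)$, apply $(m-1)$-transitivity to arrange $a_i=b_i$ for $i<m$; it then suffices to move $a_m$ to $b_m$ by the stabilizer $H=\mathrm{SAut}(X)_{a_1,\dots,a_{m-1}}$. The key device is the \emph{replica} construction: if $\partial$ is an LND and $h\in\Ker\partial$, then $h\partial$ is again an LND and $\exp(t\,h\partial)$ fixes the zero locus of $h$ pointwise. Since $X$ is affine, one can choose, for suitable $\GA$-actions $\partial$, functions $h\in\Ker\partial$ vanishing at $a_1,\dots,a_{m-1}$ but with $\exp(t\,h\partial)$ not fixing $a_m$; the resulting $\GA$-subgroups lie in $H$ and move $a_m$ in prescribed tangent directions. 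Combining finitely many such replicas and feeding in the flexibility at $a_m$ established above, one gets $T_{a_m}(H\cdot a_m)=T_{a_m}X$, so $H\cdot a_m$ is open in $X^{\mathrm{reg}}$; since $\dim X\geq 2$, the set $X^{\mathrm{reg}}\setminus\{a_1,\dots,a_{m-1}\}$ is still irreducible and contains $b_m$, so a connectedness argument puts $b_m\in H\cdot a_m$, completing the induction.

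The main obstacle is the construction inside (i) $\Rightarrow$ (ii): one must simultaneously (a) find $h\in\Ker\partial$ with the prescribed vanishing at $a_1,\dots,a_{m-1}$ while $\exp(t\,h\partial)$ still moves $a_m$, and (b) have enough genuinely different $\GA$-actions $\partial$ so that the associated replicas span $T_{a_m}X$. Neither is automatic: replicas of a single $\partial$ only move $a_m$ within one $\partial$-orbit, so one really needs the flexibility input (many independent actions) together with the fact that $a_1,\dots,a_{m-1}$ lie in general position relative to the orbit foliations — a fact that has to be extracted from transitivity and homogeneity of the $\mathrm{SAut}(X)$-action. Making this simultaneous separation precise, along with the ind-group and constructibility bookkeeping in the orbit arguments, is the technical heart; the replica trick and the reduction to a tangent-space spanning statement are the organising ideas.
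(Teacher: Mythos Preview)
The paper does not prove this theorem at all: it is quoted verbatim from \cite[Theorem~0.1]{AFKKZ} as a background result, with no argument supplied. So there is nothing in the paper to compare your proposal against.

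That said, your outline is a fair sketch of the strategy actually used in \cite{AFKKZ}. The equivalence (i) $\Leftrightarrow$ (iii) via the identification $T_x(\mathrm{SAut}(X)\cdot x)=V_x$ and the open-orbit/connectedness argument is correct in spirit, and the replica trick (replacing an LND $\partial$ by $h\partial$ with $h\in\Ker\partial$ vanishing at prescribed points) is indeed the engine behind (i) $\Rightarrow$ (ii). You also correctly locate the genuine technical content: proving that $\mathrm{SAut}(X)$-orbits are locally closed requires ind-group machinery that is not formal, and the inductive step needs care to ensure that after killing $a_1,\dots,a_{m-1}$ the remaining replicas still span $T_{a_m}X$. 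In \cite{AFKKZ} this is handled by first showing that a finite \emph{saturated} family of LNDs already acts transitively (and hence with open orbits), and then observing that saturation is preserved when passing to replicas vanishing on a finite set; your sketch gestures at this but would need that intermediate notion to be made rigorous.
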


\begin{de}
The subring $\mathrm{ML}(X)\subset \KK[X]$, which is equal to the intersection of kernels of all LNDs of $\KK[X]$, is called the {\it Makar-Limanov invariant} of~$X$.  
\end{de}

It is easy to see, that $X$ is rigid if and only if $\mathrm{ML}(X)= \KK[X]$. From the other hand if $X$ is flexible, then $\mathrm{ML}(X)=\KK$. But the condition $\mathrm{ML}(X)=\KK$ does not imply flexibility of $X$, see \cite[$\S$ 4.2]{Li}.

\section{$m$-suspensions}\label{tre}
In this section we elaborate technique, which we use in the next sections. The main concept of this section is $m$-suspension, defined in~\cite{G}.

\begin{de}
Fix a positive integer number $m$. Let $X$ be an affine variety. Given a nonconstant regular function $f\in\KK[X]$ and positive integers $k_1,\ldots,k_m$ we define a new affine variety 
$$Y=\mathrm{Susp}(X,f,k_1,\ldots, k_m)=\VV\left(y_1^{k_1}y_2^{k_2}\ldots y_m^{k_m}-f(x)\right)\subset \KK^m\times X,$$
called an {\it $m$-suspension} over $X$ with weights $k_1,\ldots,k_m$.
\end{de}

We have a natural linear action of $m$-dimensional algebraic torus $(\KK^\times)^m$ on $m$-dimensional affine space with coordinates $y_1,\ldots,y_m$. The stabilizer $\HH$ of the monomial $y_1^{k_1}y_2^{k_2}\ldots y_m^{k_m}$ is isomorphic to the direct product of the ($m-1$)-dimensional torus $\TT$ and a finite group $\ZZ_{\gcd(k_1,\ldots,k_m)}$. We have an effective action of $\HH$ on $Y=\mathrm{Susp}(X,f,k_1,\ldots, k_m)$. 

The variety $Y$ can be reducible. For example, let $X$ be a line, then $\KK[X]=\KK[x]$. Consider the $m$-suspension  $Y=\mathrm{Susp}(X,x^2,2,\ldots, 2)$. It is given in $\KK^{m+1}$ by the unique equation $y_1^2\ldots y_m^2=x^2$. Hence $Y$ is reducible. 
During this section we assume that $Y$ is irreducible.

\begin{lem}\cite[Lemma~3.4]{G}\label{edin}
Let $\partial$ be a $\TT$-homogeneous LND on $\KK[Y]$. Then there is $i\in\{1,2,\ldots,m\}$ such that for all $j\neq i $ we have $\partial(y_j)=0$.
\end{lem}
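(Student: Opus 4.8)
The plan is to analyze a $\TT$-homogeneous LND $\partial$ on $\KK[Y]$ using the structure of the defining equation $y_1^{k_1}\cdots y_m^{k_m}=f(x)$, where $\TT$ is the $(m-1)$-dimensional subtorus of $(\KK^\times)^m$ fixing the monomial $y_1^{k_1}\cdots y_m^{k_m}$. Since each $y_j$ is a $\TT$-semiinvariant, the values $\partial(y_j)$ lie in the weight spaces of $\KK[Y]$; the key point is that the characters by which the $y_j$ act span (together with the trivial character) a configuration in $\mathfrak{X}(\TT)$ where the relation $\sum k_j \chi_j = 0$ is, up to scaling, the only one. First I would set up coordinates: let $\chi_j\in\mathfrak{X}(\TT)$ be the weight of $y_j$, so that $\sum_j k_j\chi_j=0$ and any $m-1$ of the $\chi_j$ form a basis of a finite-index subgroup. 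The weight of $\partial$ is some $\delta\in\mathfrak{X}(\TT)$, and $\partial(y_j)$ is a $\TT$-homogeneous element of weight $\delta+\chi_j$.

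The next step is to apply $\partial$ to the defining relation $y_1^{k_1}\cdots y_m^{k_m}=f(x)$. Using the Leibniz rule, $\sum_j k_j y_1^{k_1}\cdots y_j^{k_j-1}\cdots y_m^{k_m}\,\partial(y_j)=\partial(f)$, where the right-hand side is the pullback of a function on $X$, hence has $\TT$-weight $0$. The term indexed by $j$ on the left has $\TT$-weight $\bigl(\sum_i k_i\chi_i\bigr)-\chi_j+(\delta+\chi_j)=\delta$. So the whole left-hand side is $\TT$-homogeneous of weight $\delta$, forcing $\partial(f)$ to have weight $\delta$; since $\partial(f)$ has weight $0$, either $\partial(f)=0$ or $\delta=0$. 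I would then argue that $\partial(f)=0$ in any case — because $f$ is, up to a unit, equal to the monomial $y_1^{k_1}\cdots y_m^{k_m}$, and a homogeneous LND cannot be nonzero on a monomial whose factors it does not kill without violating local nilpotence; more directly, one shows the image of $\partial$ restricted to the subalgebra generated by the $y_j$ is controlled, and $f\in\Ker\partial$ follows from standard facts on LNDs and the relation above. (This mirrors the structure in \cite[Lemma~3.4]{G}, which I may simply cite.)

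Granting $\partial(f)=0$, the relation becomes $\sum_j k_j y_1^{k_1}\cdots y_j^{k_j-1}\cdots y_m^{k_m}\,\partial(y_j)=0$ in the integral domain $\KK[Y]$. Now I would localize: invert all the $y_j$, so that in $\KK[Y]_{y_1\cdots y_m}$ we may divide by the monomial $y_1^{k_1}\cdots y_m^{k_m}$ and obtain $\sum_j k_j\,\partial(y_j)/y_j=0$. Suppose for contradiction that $\partial(y_a)\neq 0$ and $\partial(y_b)\neq 0$ for two distinct indices $a\neq b$. The terms $k_j\partial(y_j)/y_j$ have pairwise distinct $\TT$-weights $\delta+\chi_j-\chi_j$... — more carefully, I track weights in the localization, where the $y_j$ become invertible: the weight of $\partial(y_j)/y_j$ is $\delta$, so weights alone do not separate the terms, and I instead use the $\ZZ^m$-grading (the full $(\KK^\times)^m$-action on $Y$ is \emph{not} available, but the monomial grading by degrees in the $y_j$ is). Comparing degrees in $y_a$ versus $y_b$ of the monomials appearing in $\partial(y_a)$ and $\partial(y_b)$, and using that $\partial$ being LND forces $\partial(y_j)$ to involve $y_j$ only to bounded degree, I derive that the two nonzero terms cannot cancel against each other or against the remaining terms, giving the contradiction. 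Hence at most one $\partial(y_i)$ is nonzero, which is the claim.

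The main obstacle I anticipate is the final cancellation argument: once weights in $\TT$ no longer distinguish the summands, one must exploit the finer $\ZZ^m$-monomial structure of $\KK[Y]$ together with local nilpotence of $\partial$ to rule out cancellation between two genuinely nonzero terms $\partial(y_a)$ and $\partial(y_b)$. Establishing that $f\in\Ker\partial$ rigorously (rather than by citation) would be the second delicate point, since $f$ need not itself be a $y_j$; the cleanest route is to note $f=y_1^{k_1}\cdots y_m^{k_m}$ on $Y$ and invoke that a $\TT$-homogeneous LND annihilating the relation must annihilate this product, reducing to the behavior of $\partial$ on the $y_j$ — which is exactly what Lemma~\ref{edin} as stated in \cite{G} packages, so in the write-up I would lean on that reference.
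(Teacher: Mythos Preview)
The paper does not prove this lemma; it is quoted from \cite{G}. Nevertheless, your sketch contains two genuine errors that make it unrecoverable as written.

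First, the assertion that $\partial(f)$ ``is the pullback of a function on $X$, hence has $\TT$-weight $0$'' is false. The function $f$ lies in $\KK[X]\subset\KK[Y]$, but $\partial$ is a derivation of $\KK[Y]$ and need not preserve the subring $\KK[X]$; all you know is that $\partial(f)\in\KK[Y]_\delta$. So there is no dichotomy ``$\partial(f)=0$ or $\delta=0$'', and in fact $\partial(f)$ is typically nonzero --- for the LND of Lemma~\ref{prodol} one has $\delta(f)=\partial(f)\,y_2^{k_2}\cdots y_m^{k_m}\neq 0$. The middle of your argument therefore collapses.

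Second, the appeal to a ``finer $\ZZ^m$-monomial structure'' on $\KK[Y]$ is illusory: the relation $y_1^{k_1}\cdots y_m^{k_m}-f$ is \emph{not} homogeneous for the $\ZZ^m$-grading by exponents in the $y_j$ (the monomial sits in multidegree $(k_1,\dots,k_m)$ while $f$ sits in $(0,\dots,0)$), so no such grading descends to $\KK[Y]$. The only quotients of this grading that survive are those killing $(k_1,\dots,k_m)$, i.e.\ exactly the $\TT$-grading you already used --- and you correctly observed that it cannot separate the terms $\partial(y_j)/y_j$.

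The argument the surrounding text is set up for (and almost certainly the one in \cite{G}) avoids both issues. Fix any pair $i\neq j$ and pass to the $\ZZ$-grading $\mathfrak{h}_{ij}$ introduced immediately after the lemma: $\deg y_i=-k_j$, $\deg y_j=k_i$, all other generators in degree $0$. This is a coarsening of the $\TT$-grading (it comes from a one-parameter subgroup of $\TT$), so $\partial$ is $\mathfrak{h}_{ij}$-homogeneous of some degree $d$. Any homogeneous element of $\KK[Y]$ of strictly positive $\mathfrak{h}_{ij}$-degree is divisible by $y_j$, and any of strictly negative degree is divisible by $y_i$ (check on monomial representatives; the relation has degree $0$). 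If $d\geq 0$ then $\deg\partial(y_j)=d+k_i>0$, so $y_j\mid\partial(y_j)$ and hence $\partial(y_j)=0$ by \cite[Principle~5]{Fr}; if $d\leq 0$ then symmetrically $\partial(y_i)=0$. Thus for every pair $i\neq j$ at least one of $\partial(y_i),\partial(y_j)$ vanishes, and the lemma follows.
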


Consider the field $\LL_j=\overline{\KK(y_j)}$, which is the algebraic closure of $\KK(y_j)$. If there is a fields embedding  $\KK\subset\LL$ and $Z$ is an affine algebraic variety over $\KK$, we denote by
$Z(\LL)$ the affine algebraic variety over $\LL$ given by the same equations as $Z$. Then we have 
$$Y(\LL_j)=\VV\left(y_1^{k_1}\ldots y_m^{k_m}-f\right)\subset \LL_j^{m-1}\times X(\LL_j).$$ 
Since $y_j$ is a constant now, $Y(\LL_j)$ is a $(m-1)$-suspension over $X(\LL_j)$.

Denote by $Y_j$ the affine algebraic variety over $\LL_j$ given by 
$$\VV\left(y_1^{k_1}\ldots y_{j-1}^{k_{j-1}}y_{j+1}^{k_{j+1}}\ldots y_m^{k_m}-f\right)\subset \LL_j^{m-1}\times X(\LL_j). $$
It is easy to see, that $Y_j\cong Y(\LL_j)$.

\begin{re}
The variety $Y_j$ can be reducible.
\end{re}

The $\TT$-action on $Y$ corresponds to a $\ZZ^{m-1}$-grading of $\KK[Y]$. Often it is convenient to consider $\ZZ$ -gradings, which are its coarsenings. Let us denote by $\mathfrak{h}_{ij}$ the $\ZZ$-grading given by $\deg(y_i)~=~-~k_j$, $\deg(y_j)~=~k_i$. Degrees of all other $y_n$ and degrees of all $g\in\KK[X]$ are equal to zero.

\begin{lem}\cite[Lemma~3.7]{G}\label{osn}
If $Y_i$ is a rigid variety, then for any nonzero $\TT$-homogeneous LND $\partial\colon\KK[Y]\rightarrow\KK[Y]$ we have 

1) $\partial(y_i)\neq 0$, 

2) $\forall j\neq i, \partial(y_j)=0$, 

3) $\deg(\partial)>0$ with respect to $\ZZ$-grading $\mathfrak{h}_{ij}$.
\end{lem}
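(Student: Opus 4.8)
The plan is to analyze a nonzero $\TT$-homogeneous LND $\partial$ on $\KK[Y]$ via the structure of $Y$ as an $m$-suspension over $X$, using the hypothesis that $Y_i$ is rigid. First I would recall that by Lemma~\ref{edin} there is some index $i_0\in\{1,\dots,m\}$ with $\partial(y_j)=0$ for all $j\neq i_0$; the content of the present lemma is that under the rigidity assumption on $Y_i$ we must have $i_0=i$, and moreover $\partial(y_i)\neq 0$ and the degree with respect to $\mathfrak{h}_{i i_0}$ (equivalently $\mathfrak{h}_{ij}$ for $j\neq i$) is strictly positive. So the argument splits naturally into showing part (1), which simultaneously forces $i_0=i$, and then deriving parts (2) and (3) as consequences.

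For part (1), I would argue by contradiction: suppose $\partial(y_i)=0$. Then $y_i$ lies in the kernel of $\partial$, so $\partial$ descends to an LND of the localization $\KK[Y]_{y_i}$ and in fact restricts to an LND on the subalgebra obtained by inverting $y_i$, which upon base change to $\LL_i=\overline{\KK(y_i)}$ gives a derivation of $\KK[Y(\LL_i)]=\KK[Y_i]$. The key point is that extending scalars from $\KK$ to the field $\LL_i$ preserves the property of being a nonzero LND: local nilpotence is checked on algebra generators, and a nonzero derivation killing $y_i$ but not identically zero on $\KK[Y]$ remains nonzero after inverting $y_i$ and tensoring up, since the remaining generators $y_j$ ($j\neq i$) and the coordinates on $X$ still generate $\KK[Y_i]$ over $\LL_i$. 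This would produce a nonzero LND on the rigid variety $Y_i$, a contradiction. Hence $\partial(y_i)\neq 0$. Combined with Lemma~\ref{edin}, since $\partial$ does not kill $y_i$ the index $i_0$ supplied by that lemma must equal $i$, which is exactly part (2): $\partial(y_j)=0$ for all $j\neq i$.

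For part (3), I would use the relation $y_1^{k_1}\cdots y_m^{k_m}=f$ in $\KK[Y]$, where $f\in\KK[X]$. Applying $\partial$ and using parts (1)–(2) (so only the $y_i$ factor contributes on the left, and $\partial(f)=0$ since $f$ is a polynomial in the $X$-coordinates all of which $\partial$ might move — here I need that $\partial(f)=0$; actually this follows because $f$ together with the $y_j$ for $j \ne i$ generates the kernel or because one computes $\deg_{\mathfrak h_{ij}} f = 0$ and matches degrees), one gets $k_i\, y_i^{k_i-1}\big(\prod_{j\neq i}y_j^{k_j}\big)\,\partial(y_i)=\partial(f)$. Matching $\mathfrak{h}_{ij}$-degrees: $y_i$ has degree $-k_j$, each $y_n$ for $n\neq i,j$ has degree $0$, and $f$ as well as all elements of $\KK[X]$ have degree $0$, so $\deg_{\mathfrak h_{ij}}\partial(y_i) = \deg_{\mathfrak h_{ij}}(f) - (k_i-1)(-k_j) - k_j\cdot[\text{coefficient of }y_j] $; being careful, the left side has degree $(k_i-1)(-k_j)+k_i\cdot(\text{deg of }y_j\text{ if }j\text{ appears})$ and one concludes $\deg_{\mathfrak h_{ij}}\partial = \deg_{\mathfrak h_{ij}}\partial(y_i) - \deg_{\mathfrak h_{ij}}(y_i) = \deg_{\mathfrak h_{ij}}\partial(y_i)+k_j>0$ provided $\deg_{\mathfrak h_{ij}}\partial(y_i)\geq -k_j+1$, which holds because $\partial(y_i)$ is a nonzero polynomial and the most negative $\mathfrak h_{ij}$-degree achievable is $-k_j$ (attained only by $y_i$ itself, i.e. $\partial(y_i)\in\KK y_i$ would be needed), and if $\partial(y_i)\in\KK^\times y_i$ then $\partial$ would not be locally nilpotent. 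The main obstacle I anticipate is precisely this last degree bookkeeping: carefully justifying that $\partial(f)=0$ (or that its $\mathfrak h_{ij}$-degree forces the inequality) and ruling out $\partial(y_i)\in\KK^\times y_i$ cleanly, rather than the more conceptual base-change step in part (1), which is routine once one observes scalar extension to $\LL_i$ preserves nonzero LNDs.
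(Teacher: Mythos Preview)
The paper does not prove this lemma here; it is cited from \cite[Lemma~3.7]{G}, so there is no in-paper proof to compare against. I will therefore assess your argument on its own merits, using the surrounding text (especially the proof of Lemma~\ref{tool}, which uses the same ideas) as a guide.

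Your arguments for parts (1) and (2) are correct. If $\partial(y_i)=0$ then $\partial$ is $\KK[y_i]$-linear, hence extends $\KK(y_i)$-linearly to the localization and then $\LL_i$-linearly to $\LL_i\otimes_{\KK(y_i)}\KK[Y]_{y_i}\cong \LL_i[Y_i]$; since $Y$ is assumed irreducible, $\KK[Y]$ is a domain and these maps are injective, so the extension is a nonzero LND on $\LL_i[Y_i]$, contradicting rigidity of $Y_i$. Combining (1) with Lemma~\ref{edin} gives (2) immediately, as you say.

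Your argument for part (3), however, contains a genuine error. You claim that ``the most negative $\mathfrak h_{ij}$-degree achievable is $-k_j$, attained only by $y_i$ itself.'' This is false: $y_i^2$ has degree $-2k_j$, and more generally $y_i^N$ has degree $-Nk_j$, so there is no lower bound on degrees in $\KK[Y]$. Consequently your inequality $\deg_{\mathfrak h_{ij}}\partial(y_i)\geq -k_j+1$ is not justified, and the detour through the relation $\prod y_n^{k_n}=f$ and the question of whether $\partial(f)=0$ is both unnecessary and unresolved in your sketch.

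The clean argument (exactly the one the paper uses in the proof of Lemma~\ref{tool}) runs as follows. Suppose $\deg_{\mathfrak h_{ij}}(\partial)\leq 0$. Then $\deg_{\mathfrak h_{ij}}(\partial(y_i))=\deg_{\mathfrak h_{ij}}(y_i)+\deg_{\mathfrak h_{ij}}(\partial)\leq -k_j<0$. Any homogeneous element of $\KK[Y]$ of strictly negative $\mathfrak h_{ij}$-degree lifts to a homogeneous polynomial in $\KK[X][y_1,\dots,y_m]$ of the same negative degree (the defining relation is homogeneous of degree $0$), and every monomial of negative degree must contain $y_i$; hence $y_i\mid\partial(y_i)$ in $\KK[Y]$. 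By \cite[Principle~5]{Fr} this forces $\partial(y_i)=0$, contradicting part (1). Therefore $\deg_{\mathfrak h_{ij}}(\partial)>0$. Note that this uses the full strength of Principle~5 (if $a\mid\partial(a)$ then $\partial(a)=0$), not merely the special case $\partial(y_i)\in\KK^\times y_i$ that you invoke.
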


\begin{lem}\label{ridsus}
Let $1\leq i\neq j\leq m$. Assume $Y_i$ and $Y_j$ are rigid varieties. Then $Y$ is rigid.
\end{lem}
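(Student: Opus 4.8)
The plan is to derive a contradiction from the assumption that $Y$ admits a nonzero LND. By Lemma~\ref{flz} applied to the $\ZZ^{m-1}$-grading coming from the $\TT$-action (or more directly by Lemma~\ref{flza} applied to the torus $\TT$), if $Y$ is not rigid then $\KK[Y]$ admits a nonzero $\TT$-homogeneous LND $\partial$. Since $Y_i$ is rigid, Lemma~\ref{osn} applies with this index $i$: we get $\partial(y_i)\neq 0$, $\partial(y_j)=0$ for all $j\neq i$ (in particular for our fixed $j$), and $\deg\partial>0$ with respect to the $\ZZ$-grading $\mathfrak{h}_{ij}$. Symmetrically, since $Y_j$ is also rigid, Lemma~\ref{osn} applies with index $j$: it yields $\partial(y_j)\neq 0$. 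But $j\neq i$, so the second conclusion of Lemma~\ref{osn} applied at index $i$ already tells us $\partial(y_j)=0$, contradicting $\partial(y_j)\neq 0$.

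So the core of the argument is just playing the two instances of Lemma~\ref{osn} against each other. Concretely, the steps in order are: (1) assume for contradiction that $Y$ is not rigid; (2) invoke Lemma~\ref{flza} (iterating Lemma~\ref{flz} over the $m-1$ coordinates of the grading group of the $\TT$-action) to produce a nonzero $\TT$-homogeneous LND $\partial$ on $\KK[Y]$; (3) apply Lemma~\ref{osn} with the rigid variety $Y_i$ to obtain $\partial(y_k)=0$ for every $k\neq i$; (4) apply Lemma~\ref{osn} with the rigid variety $Y_j$ to obtain $\partial(y_i)=0$ for every index $\neq j$ and $\partial(y_j)\neq 0$; (5) observe that $\partial(y_j)=0$ from step (3) and $\partial(y_j)\neq 0$ from step (4) are incompatible, completing the proof.

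The only point that needs a little care is checking that Lemma~\ref{osn} is genuinely applicable, i.e. that $\partial$ is $\TT$-homogeneous in the precise sense required there and that $Y$ is irreducible (which we have assumed throughout the section, and which is needed for the $m$-suspension machinery of Lemmas~\ref{edin} and~\ref{osn}). One should also note that Lemma~\ref{osn} as stated is about $\TT$-homogeneous LNDs, and the passage from an arbitrary LND to a $\TT$-homogeneous one is exactly what Lemma~\ref{flza} provides, since the $\TT$-action corresponds to a $\ZZ^{m-1}$-grading of $\KK[Y]$; no finer homogeneity is needed. I do not anticipate a serious obstacle here: once the two symmetric applications of Lemma~\ref{osn} are set up, the contradiction is immediate, and the statement follows. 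If one wanted, the third conclusion of Lemma~\ref{osn} (the sign of $\deg\partial$ with respect to $\mathfrak{h}_{ij}$) gives an alternative route to the contradiction — applying it at index $i$ forces $\deg\partial>0$ and at index $j$ forces $\deg\partial<0$ for the same grading $\mathfrak{h}_{ij}$ (up to the evident sign convention $\mathfrak{h}_{ij}=-\mathfrak{h}_{ji}$) — but the argument via $\partial(y_j)$ is the cleanest.
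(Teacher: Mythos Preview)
Your proof is correct and follows essentially the same approach as the paper: apply Lemma~\ref{osn} at both indices $i$ and $j$ to force any nonzero $\TT$-homogeneous LND to satisfy the incompatible conditions $\partial(y_j)=0$ and $\partial(y_j)\neq 0$, then invoke Lemma~\ref{flza} to conclude rigidity. The paper's proof is simply a two-line compression of exactly this argument.
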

\begin{proof}
By Lemma~\ref{osn}, we obtain that any $\TT$-homogeneous LND on $\KK[Y]$ is zero. So, by Lemma~\ref{flza} the variety $Y$ is rigid.  
\end{proof}

\begin{lem}\label{odddd}
Let $Y=\mathrm{Susp}(X,f, k_1=2, k_2=a, k_3,\ldots, k_m)$, where $a$ is an odd number. Suppose $Y_1$ is rigid. Let $\partial$ be a $\TT$-homogeneous LND of $\KK[Y]$. Then for every $g\in\KK[X]$ we have 
$y_1\mid\partial(g)$.
\end{lem}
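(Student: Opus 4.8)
The plan is to combine Lemma~\ref{osn} with a divisibility argument exploiting the fact that $k_1=2$. Since $Y_1$ is rigid, Lemma~\ref{osn} applies: for the $\TT$-homogeneous LND $\partial$ we have $\partial(y_1)\neq 0$, $\partial(y_j)=0$ for all $j\neq 1$, and $\deg(\partial)>0$ with respect to the $\ZZ$-grading $\mathfrak{h}_{12}$ (so in particular with respect to $\mathfrak{h}_{1j}$ for every $j\neq 1$, by the homogeneity and the relation between these gradings; alternatively one reruns the argument of Lemma~\ref{osn} with $j$ in place of $2$). I would fix the defining relation $R=y_1^2 y_2^{k_2}\cdots y_m^{k_m}-f(x)=0$ and apply $\partial$ to it inside $\KK[Y]$. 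Because $\partial$ annihilates every $y_j$ with $j\neq 1$, we get
\begin{equation*}
2y_1 y_2^{k_2}\cdots y_m^{k_m}\,\partial(y_1) \;=\; \partial(f) \;=\;\sum_{g}\frac{\partial f}{\partial g}\,\partial(g),
\end{equation*}
read in $\KK[Y]$; more precisely, lifting to $\KK[y_1,\dots,y_m]\otimes\KK[X]$ and working modulo $R$, the left-hand side is divisible by $y_1$.

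Next I would argue that $y_1$ is a prime element of $\KK[Y]$, or at least that reduction modulo $y_1$ is well behaved: setting $y_1=0$ in $\KK[Y]$ yields $\KK[Y]/(y_1)$, on which $R$ becomes $-f(x)=0$ inside $\KK[y_2,\dots,y_m]\otimes\KK[X]/(f)$ — here I should check $y_1$ is prime using irreducibility of $Y$ together with the monomial shape of the relation (the hyperplane $\{y_1=0\}\cap Y$ being irreducible, or appealing to the standard fact that for $\mathrm{Susp}$ with these weights the coordinate $y_1$ generates a prime ideal when $Y$ is irreducible). Granting that, the displayed identity shows $y_1\mid\partial(f)$ in $\KK[Y]$. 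The point is then to propagate this to arbitrary $g\in\KK[X]$: the function $f$ is a specific non-constant element, and I want $y_1\mid\partial(g)$ for all $g$. Here the oddness of $a=k_2$ should enter — I expect one reduces modulo $y_1$ and uses that in $\KK[Y]/(y_1)\cong \KK[y_2,\dots,y_m,X]/(f)$ the image $\bar\partial$ of $\partial$ is a derivation (possibly the zero derivation, or an LND after renormalizing) whose interaction with the odd exponent forces it to kill the $y_2$-direction, and then a degree/parity count in the grading $\mathfrak{h}_{12}$ pins down $\bar\partial(\bar g)=0$ for all $g\in\KK[X]$, which is exactly $y_1\mid\partial(g)$.

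The main obstacle I anticipate is precisely this last propagation step: getting from "$y_1$ divides $\partial(f)$" to "$y_1$ divides $\partial(g)$ for every $g\in\KK[X]$". The naive induction on monomials in $\KK[X]$ via the Leibniz rule does not immediately work because $\partial(g_1 g_2)=g_1\partial(g_2)+g_2\partial(g_1)$ only gives divisibility of a sum. I would therefore argue at the level of the quotient derivation on $\KK[Y]/(y_1)$: show that this induced derivation $\bar\partial$ must vanish on the subalgebra $\KK[X]\subset\KK[Y]/(y_1)$, using that $\bar\partial$ is again locally nilpotent (extreme homogeneous components survive, Lemmas~\ref{fl}–\ref{flza}), that $\deg_{\mathfrak{h}_{12}}\partial>0$ forces $\bar\partial(y_2)$ to be non-negatively graded, and that the surviving relation $f=0$ together with the odd weight $a$ obstructs any nonzero such $\bar\partial$ on the $X$-coordinates — the parity of $a$ is what rules out the degenerate case that would otherwise allow $\bar\partial$ to act nontrivially along $X$ (this is the analogue, in the quotient, of the rigidity input that fails when $a$ is even, cf. the exceptional form~\eqref{fff}). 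Once $\bar\partial|_{\KK[X]}=0$ is established, the conclusion $y_1\mid\partial(g)$ for all $g\in\KK[X]$ is immediate.
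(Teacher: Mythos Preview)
Your plan has a genuine gap at its central step: the ``induced derivation $\bar\partial$ on $\KK[Y]/(y_1)$'' does not exist. For $\partial$ to descend to the quotient you need $\partial\bigl((y_1)\bigr)\subset(y_1)$, i.e.\ $y_1\mid\partial(y_1)$. But Lemma~\ref{osn} gives $\partial(y_1)\neq 0$, and for any LND the condition $y_1\mid\partial(y_1)$ forces $\partial(y_1)=0$ (this is exactly \cite[Principle~5]{Fr}, which the paper itself invokes). So the ideal $(y_1)$ is \emph{never} $\partial$-stable when $\partial\neq 0$, and there is no $\bar\partial$ on $\KK[Y]/(y_1)$ to analyse. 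Your identity $2y_1y_2^{k_2}\cdots y_m^{k_m}\,\partial(y_1)=\partial(f)$ correctly yields $y_1\mid\partial(f)$, but the propagation to all $g\in\KK[X]$ cannot proceed via a quotient derivation. (There is also a secondary issue: $y_1$ need not be prime in $\KK[Y]$, since $\KK[Y]/(y_1)\cong\KK[X][y_2,\dots,y_m]/(f)$ and no irreducibility of $f$ is assumed.)

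The paper's argument avoids quotients entirely and is a short parity computation in the grading $\mathfrak{h}_{12}$ (where $\deg y_1=-a$, $\deg y_2=2$, everything else has degree~$0$). From Lemma~\ref{osn} one has $\deg\partial>0$, and if $\deg\partial<a$ then $\deg\partial(y_1)<0$ forces $y_1\mid\partial(y_1)$, a contradiction; if $\deg\partial>a$ then every image has positive degree, hence is divisible by $y_2$, and one may replace $\partial$ by $\partial/y_2$. Iterating, take $\delta=\partial/y_2^{k}$ with $k$ maximal; the preceding dichotomy applied to $\delta$ forces $\deg\delta=a$. Now for $g\in\KK[X]$ one has $\deg\delta(g)=a$, and a monomial of $\mathfrak{h}_{12}$-degree $a$ satisfies $-ap_1+2p_2=a$, so $2p_2=a(p_1+1)$; since $a$ is odd, $p_1+1$ is even and $p_1\geq 1$. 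Hence $y_1\mid\delta(g)$, and therefore $y_1\mid\partial(g)=y_2^{k}\delta(g)$. This is where the oddness of $a$ enters concretely, as a lattice/parity obstruction, not through any rigidity of a quotient.
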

\begin{proof}
Consider the grading $\mathfrak{h}_{12}$. Since $\partial$ is $\TT$-homogeneous, it is $\mathfrak{h}_{12}$-homogeneous. By Lemma~\ref{osn} we have $\deg \partial>0$, 
$\partial(y_1)\neq 0$, and 
$\partial(y_2)=0$. 
If $\deg \partial<a$, then $\deg(\partial(y_1))<0$. Hence $y_1\mid \partial(y_1)$. This gives a contradiction with $\partial(y_1)\neq 0$, see~\cite[Principle~5]{Fr}.
If $\deg \partial>a$, then $\partial=y_2\gamma$, where $\gamma$ is a $\mathfrak{h}_{12}$-homogeneous LND of $\KK[Y]$. 
Let us change $\partial$ to $\delta=\frac{\partial}{y_2^{k}}$ for maximal possible~$k$. Then $\deg\delta=a$. Since $\deg g=0$, we have $\deg(\delta(g))=a$. 
Since $a$ is odd, we obtain $y_1\mid \delta(g)$.
\end{proof}

\begin{lem}\label{tool}
Let 
$$Y=\mathrm{Susp}(X,f,k_1, k_2,\ldots,k_m)\ \text{and}\  Z=\mathrm{Susp}(X,f,ck_1, k_2,\ldots,k_m)$$ 
for some positive integer $c$.
Suppose $Z$ is rigid. Then for all $ j\geq 2$ we have $y_j\in\mathrm{ML}(Y)$.

\end{lem}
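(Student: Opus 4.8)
The plan is to relate LNDs on $\KK[Y]$ to LNDs on $\KK[Z]$ by exploiting that $Z$ is the $m$-suspension obtained from $Y$ by ``blowing up'' the first weight, and then use the rigidity of $Z$ to kill the action on $\KK[X]$. First I would reduce to $\TT$-homogeneous LNDs: by Lemma~\ref{flza} it suffices to prove that every $\TT$-homogeneous LND $\partial$ on $\KK[Y]$ annihilates each $y_j$ with $j\geq 2$ (since $\mathrm{ML}(Y)$ is the intersection of kernels of all LNDs, and every LND is built from homogeneous ones whose kernels contain the common kernel; more precisely, a function lies in $\mathrm{ML}(Y)$ as soon as it is killed by every $\TT$-homogeneous LND, because the extreme homogeneous components of an arbitrary LND are again LNDs). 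Fix such a $\partial$. By Lemma~\ref{edin} there is an index $i$ so that $\partial(y_\ell)=0$ for all $\ell\neq i$; if $i\neq 1$ we are already done for every $j\neq i$, so the only case requiring work is $i=1$, and we must still show $\partial(y_j)=0$ when $j\neq 1$ — which Lemma~\ref{edin} already gives — so in fact the content is entirely about showing $\partial(y_j)=0$ for $j\ge 2$, i.e.\ ruling out $i=j$ for each $j\ge2$; equivalently showing that $i=1$ is forced (or $\partial=0$).

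The key step is to pass from $Y$ to $Z$. Consider the ring homomorphism $\varphi\colon\KK[Z]\to\KK[Y]$ sending $y_1\mapsto y_1^{?}$ — more carefully, I would instead go the other way: there is a natural finite surjective morphism, or at least a rational map, coming from the substitution $y_1\mapsto y_1^c$ (or a cyclic cover), that identifies $\KK[Y]$ with a subring of (a localization of) $\KK[Z]$, compatibly with the torus $\TT$ and with $\KK[X]$. Suppose, for contradiction, that $\partial(y_j)\neq 0$ for some $j\ge2$; then by Lemma~\ref{edin} the distinguished index is $i=j$, so $\partial(y_1)=0$ and $\partial(y_\ell)=0$ for all $\ell\neq j$. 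A $\TT$-homogeneous LND of $\KK[Y]$ killing $y_1,\ldots,\widehat{y_j},\ldots,y_m$ acts only on the ``$(y_j,x)$-part'' and, because $y_1$ is $\partial$-constant, it extends to a $\TT$-homogeneous LND of $\KK[Z]$: indeed the defining equation $y_1^{ck_1}y_2^{k_2}\cdots y_m^{k_m}=f$ of $Z$ differs from that of $Y$ only in the exponent of the $\partial$-constant variable $y_1$, so the formula for $\partial$ on generators (with $\partial(y_1)=0$) still satisfies the Leibniz rule modulo this new relation, and local nilpotency is preserved since $y_1$ is central and constant. This produces a nonzero LND on $\KK[Z]$, contradicting rigidity of $Z$. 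Hence $\partial(y_j)=0$ for all $j\ge2$, and therefore $y_j\in\mathrm{ML}(Y)$.

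The main obstacle I anticipate is making the passage $\KK[Y]\rightsquigarrow\KK[Z]$ rigorous: one must check that an LND of $\KK[Y]$ with $\partial(y_1)=0$ genuinely lifts to a well-defined LND of $\KK[Z]$ (not merely of a localization), i.e.\ that the image lands in $\KK[Z]$ and that the relation of $Z$ is respected — this is where irreducibility of both $Y$ and $Z$, and the fact that only the $\partial$-constant variable's exponent changes, are used. A secondary technical point is the reduction from arbitrary LNDs to $\TT$-homogeneous ones for the purpose of computing $\mathrm{ML}(Y)$: one invokes Lemma~\ref{flza} together with the observation that if every $\TT$-homogeneous LND kills $y_j$ then so does every LND (its nonzero homogeneous components are LNDs by Lemma~\ref{fl}, applied coordinate-by-coordinate on $\ZZ^{m-1}$ as in the proof of Lemma~\ref{flza}, and each such component kills $y_j$). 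Once these two points are settled, the contradiction with rigidity of $Z$ closes the argument.
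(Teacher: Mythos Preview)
Your core idea---when $\partial(y_1)=0$, extend $\partial$ to $\KK[Z]$ via $\KK[Z]=\KK[Y][w]/(w^c-y_1)$ with $\partial(w)=0$ and contradict rigidity of $Z$---is exactly the engine the paper uses, and your worry about this step is misplaced: it is routine once $\partial(y_1)=0$.

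The genuine gap is your reduction from arbitrary LNDs to $\TT$-homogeneous ones. You claim that ``its nonzero homogeneous components are LNDs by Lemma~\ref{fl}, applied coordinate-by-coordinate.'' This is false: Lemma~\ref{fl} only guarantees that the \emph{extreme} homogeneous components are LNDs, not all of them. Iterating over coordinates of $\ZZ^{m-1}$ gives you only that the components sitting at vertices of the Newton polytope of $\delta$ are LNDs; interior components need not be. So knowing that every $\TT$-homogeneous LND kills $y_j$ does \emph{not} directly imply that every LND kills $y_j$, and your argument does not close.

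The paper avoids this by working with the single $\ZZ$-grading $\mathfrak{h}_{1j}$ (where $\deg y_1=-k_j$, $\deg y_j=k_1$, all else zero) rather than the full $\TT$-grading, and by proving something stronger than ``$\partial(y_j)=0$'': it shows every nonzero $\mathfrak{h}_{1j}$-homogeneous LND has \emph{strictly positive degree}. Indeed, if $\deg\partial\le 0$ then $\deg(\partial(y_1))<0$, forcing $y_1\mid\partial(y_1)$ and hence $\partial(y_1)=0$, so one lifts to $Z$ as above. Now for an arbitrary LND $\delta=\sum_{i=l}^k\delta_i$, only the bottom piece $\delta_l$ is needed: it is an LND by Lemma~\ref{fl}, hence $l>0$, hence \emph{every} $\delta_i$ has degree $i\ge l>0$. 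Then each $\delta_i(y_j)$ is homogeneous of positive degree, so divisible by $y_j$; thus $y_j\mid\delta(y_j)$ and $\delta(y_j)=0$. The passage through a positivity statement on degrees, rather than a vanishing statement on $y_j$, is precisely what lets one transfer information from the single extreme component $\delta_l$ to all of $\delta$.
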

\begin{proof}
Let us fix $2\leq j\leq m$. Consider the $\ZZ$-grading $\mathfrak{h}_{1j}$ of $\KK[Y]$. Let $\partial$ be an $\mathfrak{h}_{1j}$-homogeneous LND of $\KK[Y]$. 

Suppose $\deg \partial\leq 0$. 
Then $\deg(\partial(y_1))<0$. Therefore, $y_1\mid \partial(y_1)$. Hence $\partial(y_1)=0$, see~\cite[Principle~5]{Fr}. Let us put $y_1=w^c$. Then $\partial$ induces a nonzero LND on 
$$\{w^{ck_1}y_2^{k_2}\ldots y_m^{k_m}=f\}\cong Z,$$ 
a contradiction. Therefore, $\deg \partial> 0$.

Now let $\delta$ be a nonzero LND of $\KK[Y]$. Consider $\delta=\sum_{i=l}^k\delta_i$ the decomposition into homogeneous summands. Then $\delta_l$ is a nonzero LND of degree $l$.  Therefore, $l> 0$. Then $y_j\mid \delta(y_j)$. Since $\delta$ is an LND, we obtain $\delta(y_j)=0$. So, $y_j\in\mathrm{ML}(Y)$. 
\end{proof}

\begin{lem}\label{prodol}
Let $X$ be a nonrigid affine variety. Then 
$$Y=\mathrm{Susp}(X,f,1,k_2,\ldots,k_m)$$
 is not rigid.
\end{lem}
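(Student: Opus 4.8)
The plan is to transfer a given nonzero LND on $\KK[X]$ to the suspension $Y$ using the variable $y_1$, which plays the role of a ``free'' variable because its weight is $1$. Concretely, write $A=\KK[X]$, pick a nonzero LND $\partial_0$ on $A$, and set $g=f$. On $\KK[Y]=A[y_1,\ldots,y_m]/(y_1 y_2^{k_2}\cdots y_m^{k_m}-f)$ I would like to define a derivation $\partial$ by declaring $\partial(y_i)=0$ for $i\geq 2$ and $\partial(y_1)=-y_2^{k_2}\cdots y_m^{k_m}\,\partial_0(f)$ on the coordinate ring of $\KK^m\times X$, together with $\partial|_A=(y_2^{k_2}\cdots y_m^{k_m})^{N}\partial_0$ for a suitable exponent, chosen so that $\partial$ annihilates the defining polynomial $h=y_1y_2^{k_2}\cdots y_m^{k_m}-f$ and hence descends to $\KK[Y]$. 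The cleanest way to arrange this is to first build the derivation on $\KK[\KK^m\times X]$ and check $\partial(h)=0$ directly: $\partial(h)=\partial(y_1)\cdot y_2^{k_2}\cdots y_m^{k_m}+y_1\cdot 0-\partial(f)$, and one picks $\partial(y_1)$ and the multiplier on $\partial_0$ so that these two terms cancel. Because $y_1$ appears to the first power, no extra multiplication by $y_2^{k_2}\cdots y_m^{k_m}$ is forced on the $A$-part beyond what is needed to clear denominators, so the construction is genuinely simpler than in the general weighted case.

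The key steps, in order, are: (1) fix a nonzero LND $\partial_0$ of $A=\KK[X]$, which exists since $X$ is nonrigid; (2) on the polynomial ring $\KK[X][y_1,\ldots,y_m]$ define the derivation $\partial$ by $\partial|_A = (y_2^{k_2}\cdots y_m^{k_m})\,\partial_0$ (extended as a derivation over $A$), $\partial(y_1)=\partial_0(f)$, and $\partial(y_i)=0$ for $2\le i\le m$, then verify $\partial(y_1 y_2^{k_2}\cdots y_m^{k_m}-f)=(y_2^{k_2}\cdots y_m^{k_m})\partial_0(f)-(y_2^{k_2}\cdots y_m^{k_m})\partial_0(f)=0$, so $\partial$ induces a derivation of $\KK[Y]$; (3) check $\partial$ is locally nilpotent: on generators $y_i$ ($i\ge 2$) it is zero, on $y_1$ we get $\partial(y_1)=\partial_0(f)\in A$ and then $\partial^2(y_1)=(y_2^{k_2}\cdots y_m^{k_m})\partial_0^2(f)$, and iterating, $\partial^{n+1}(y_1)=(y_2^{k_2}\cdots y_m^{k_m})^{n}\partial_0^{n+1}(f)=0$ once $n$ exceeds the nilpotency index of $\partial_0$ at $f$; on an element $a\in A$ we have $\partial(a)=(y_2^{k_2}\cdots y_m^{k_m})\partial_0(a)$ and similarly $\partial^{n}(a)=(y_2^{k_2}\cdots y_m^{k_m})^{n}\partial_0^{n}(a)$, which vanishes for large $n$. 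Since $\KK[Y]$ is generated by $A$ together with $y_1,\ldots,y_m$ and local nilpotence on a generating set suffices (see \cite[Principle~8]{Fr} or the standard fact that the set of elements killed by a power of $\partial$ is a subalgebra), $\partial$ is an LND. (4) Finally, $\partial$ is nonzero: $\partial(y_1)=\partial_0(f)$, and if this were zero we would replace $f$; more robustly, since $\partial_0\ne 0$ there is $a\in A$ with $\partial_0(a)\ne 0$, and then $\partial(a)=(y_2^{k_2}\cdots y_m^{k_m})\partial_0(a)\ne 0$ in $\KK[Y]$ because $y_2^{k_2}\cdots y_m^{k_m}$ is a nonzerodivisor modulo $h$ (as $Y$ is irreducible and this monomial is not identically zero on $Y$). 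Hence $Y$ is not rigid.

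The main obstacle I anticipate is point (4), ensuring $\partial\neq 0$ as a derivation of the quotient ring $\KK[Y]$ rather than merely of the polynomial ring: one must know that $y_2^{k_2}\cdots y_m^{k_m}\partial_0(a)$ does not lie in the ideal $(h)$, equivalently that this element is nonzero in $\KK[Y]$. This follows from irreducibility of $Y$ (assumed throughout the section) together with the observation that $h$ is irreducible or at least that the monomial $y_2^{k_2}\cdots y_m^{k_m}$ is not a zero divisor in $\KK[Y]$; a short argument comparing the $y_1$-degrees, or noting that $\KK[Y]\cong A[y_2,\ldots,y_m]$ after using $h$ to eliminate $y_1$ when $m\geq 2$ — wait, that elimination is not quite right since $y_1$ appears with the monomial coefficient — so the cleanest justification is: $\KK[Y]$ contains $A$ as a subring (the composite $A\hookrightarrow \KK[\KK^m\times X]\twoheadrightarrow\KK[Y]$ is injective because $h$ involves $y_1$), the element $y_2^{k_2}\cdots y_m^{k_m}$ is not a zero divisor on $\KK[Y]$ since $Y$ is irreducible and this function does not vanish identically, and $\partial_0(a)\in A\setminus\{0\}$; therefore the product is nonzero. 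The remaining steps are routine verifications using the Leibniz rule and the definition of $Y$.
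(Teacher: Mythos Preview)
Your construction is exactly the one in the paper: the derivation $\delta$ there is defined by $\delta(y_1)=\partial(f)$, $\delta(y_j)=0$ for $j\ge 2$, and $\delta(g)=\partial(g)\,y_2^{k_2}\cdots y_m^{k_m}$ for $g\in\KK[X]$, after which the paper simply asserts that $\delta$ is a nonzero LND. Your steps (2)--(4) supply the verifications the paper omits, and your care in (4) about nonvanishing in the quotient is well placed; note that your opening paragraph's tentative formula $\partial(y_1)=-y_2^{k_2}\cdots y_m^{k_m}\partial_0(f)$ with an extra power on the $A$-part is off, but you correct it in the ``key steps'' paragraph to the right choice.
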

\begin{proof}
Since $X$ is not rigid, there exists a nonzero LND $\partial$ of $\KK[X]$. Let us show that there exists a nonzero LND $\delta$ of 
$$\KK[X][y_1,\ldots,y_m]/(y_1y_2^{k_2}\ldots y_m^{k_m}-f)=\KK[Y].$$
Define 
\begin{equation*}
\delta(y_1)=\partial(f),
\end{equation*}
\begin{equation*}
\delta(y_j)=0, \qquad j\geq 2,
\end{equation*}
\begin{equation*}
\delta(g)=\partial(g)y_2^{k_2}\ldots y_m^{k_m}, \qquad g\in\KK[X].
\end{equation*} 
It is easy to see that $\delta$ is a nonzero LND of $\KK[Y]$.
\end{proof}

\begin{lem}\label{nod}
 Assume $Y=\mathrm{Susp}(X,f,k_1,\ldots,k_m)$ is not rigid. Let $d=\gcd(k_1, \ldots, k_m)$. Then $Z=\mathrm{Susp}(X,f,d)$ is not rigid.
\end{lem}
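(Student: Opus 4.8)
Here is how I would approach the proof.

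Since $Y$ is not rigid, Lemma~\ref{flza} supplies a nonzero $\TT$-homogeneous LND $\partial$ on $\KK[Y]$, and Lemma~\ref{edin} gives an index $i$ with $\partial(y_j)=0$ for all $j\ne i$; after renumbering I may take $i=1$. Write $k_j=d\,a_j$, so that $\gcd(a_1,\dots,a_m)=1$. As $y_2,\dots,y_m$ lie in $\ker\partial$, the derivation $\partial$ extends to the localization $R:=\KK[Y][y_2^{-1},\dots,y_m^{-1}]$, and this localization is exactly the coordinate ring of the one-variable suspension $\mathrm{Susp}(X\times(\KK^\times)^{m-1},\,h,\,k_1)$ with $h=f\prod_{j\ge2}y_j^{-k_j}$. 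Because $d\mid k_j$ for every $j\ge 2$, we have $h=f\,u^{d}$ with the unit $u=\prod_{j\ge2}y_j^{-a_j}$, so $\mathrm{Spec}\,R$ carries a nonzero LND.

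Next I would record two elementary facts. For any affine variety $A$, nonconstant $g\in\KK[A]$ and positive integers $b,c$ there is an isomorphism $\mathrm{Susp}(A,g,bc)\cong\mathrm{Susp}(\mathrm{Susp}(A,g,b),w,c)$ (both sides have coordinate ring $\KK[A][w]/(w^{bc}-g)$). And an affine domain $C$ is rigid if and only if $C[t_1^{\pm1},\dots,t_r^{\pm1}]$ is rigid: every unit lies in the kernel of every LND (see \cite[Principle~5]{Fr}), so any LND of the Laurent extension kills the $t_l$, and a $\ZZ^r$-homogeneous component of it (Lemma~\ref{fl}, used $r$ times) is a monomial multiple of an LND of $C$. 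Substituting $w\mapsto wu$ in the relation $w^{d}=f\,u^{d}$ identifies $\mathrm{Susp}(X\times(\KK^\times)^{m-1},f\,u^{d},d)$ with $Z\times(\KK^\times)^{m-1}$, where $Z=\mathrm{Susp}(X,f,d)$; combined with the first fact this identifies $\mathrm{Spec}\,R$ with $\mathrm{Susp}(Z,z,a_1)\times(\KK^\times)^{m-1}$, $z$ being the coordinate of $Z$. Hence this variety is not rigid, and by the second fact $\mathrm{Susp}(Z,z,a_1)$ is not rigid. The whole statement is thereby reduced to the assertion: if $\mathrm{Susp}(Z,z,a)$ is not rigid, then $Z$ is not rigid. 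For $a=1$ this is immediate from $\mathrm{Susp}(Z,z,1)\cong Z$, which already disposes of every case in which the index $i$ of Lemma~\ref{edin} satisfies $k_i=d$.

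The case $a>1$ is the crux, and it genuinely occurs (for instance $\mathrm{Susp}(\AA^1,x,2,3)\cong\AA^{2}$ is not rigid while there $d=1$), so it cannot be avoided. I would treat it by induction on $m$ in contrapositive form: assuming $Z$ rigid I want $Y$ rigid, and by Lemma~\ref{ridsus} it is enough that $Y_1$ and $Y_2$ be rigid. For $j\in\{1,2\}$ the variety $Y_j$ is a suspension over $X(\LL_j)$ with the $m-1$ weights $\{k_l\}_{l\ne j}$ — the unit $y_j^{-k_j}$ being absorbed by a root extraction in the algebraically closed field $\LL_j$ — and its $\gcd$ is a multiple $d_j$ of $d$; by the inductive hypothesis over $\LL_j$ it suffices that $\mathrm{Susp}(X(\LL_j),f,d_j)\cong\mathrm{Susp}(Z(\LL_j),z,d_j/d)$ be rigid. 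Rigidity is insensitive to ground field extension (an LND of $\KK[Z]\otimes_\KK\LL$ is defined over a finitely generated subring of $\LL$ and, specialized at a $\KK$-point, gives an LND of $\KK[Z]$), so $Z(\LL_j)$ is rigid, and one is reduced, over $\LL_j$, to the one-variable cyclic suspension $\mathrm{Susp}(Z(\LL_j),z,d_j/d)$. Thus the induction closes precisely once the reduced assertion is proved for a single suspension variable; this is the genuine obstacle, and I expect it to demand a direct analysis of an LND on $\KK[Z][v]/(v^{a}-z)$ — producing, after inverting a suitable element of its kernel, a nonzero LND on a Laurent extension of $\KK[Z]$ — rather than any further formal manipulation.
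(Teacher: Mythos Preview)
Your proposal is not a proof: you yourself isolate the ``crux'' --- the assertion that non-rigidity of $\mathrm{Susp}(Z,z,a)$ forces non-rigidity of $Z$ --- and then leave it unproven. The induction sketched in your third paragraph does not close this gap either, since the inductive step again reduces to exactly the same unproven one-variable assertion (now with $a=d_j/d$ over the field $\LL_j$). So the argument is circular: everything has been reduced to a statement that is never established.

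There is also a slip in your second paragraph. After the substitution $w\mapsto wu$, the inner $d$-suspension is indeed identified with $Z\times(\KK^\times)^{m-1}$, but under that isomorphism the coordinate $w$ corresponds to $zu$, not to $z$. Hence $\mathrm{Spec}\,R$ is $\mathrm{Susp}\bigl(Z\times(\KK^\times)^{m-1},\,zu,\,a_1\bigr)$, and pulling the torus factor outside to obtain $\mathrm{Susp}(Z,z,a_1)\times(\KK^\times)^{m-1}$ would require $u=\prod_{j\ge 2}y_j^{-a_j}$ to be an $a_1$-th power in the Laurent ring, i.e.\ $a_1\mid a_j$ for every $j\ge 2$; since $\gcd(a_1,\dots,a_m)=1$ this forces $a_1=1$, which is precisely the trivial case.

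The paper's argument is direct and sidesteps all of this machinery. The key observation is that $\KK[Z]$ already sits inside $\KK[Y]$ as the $\TT$-invariant subring: setting $h=y_1^{k_1/d}\cdots y_m^{k_m/d}$ one has $\KK[Y]_0=\KK[X][h]\cong\KK[X][z]/(z^d-f)=\KK[Z]$, and every graded piece $\KK[Y]_\alpha$ is a free $\KK[Y]_0$-module of rank one generated by a monomial $y^{s(\alpha)}$. Given a nonzero $\TT$-homogeneous LND $\partial$ of degree $\beta$, one simply sets $\delta(F)=\partial(F)/y^{s(\beta)}$ for $F\in\KK[Y]_0$; the Leibniz rule is immediate, and local nilpotence follows from the degree function $\nu_\partial$. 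No use of Lemma~\ref{edin}, no localization, and no induction are needed.
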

\begin{proof}
Since $Y$ is not rigid, by Lemma~\ref{flza} there is a nonzero $\TT$-homogeneous LND $\partial$ of $\KK[Y]$. 

Let us denote 
$$h=y_1^{\frac{k_1}{d}}\ldots y_m^{\frac{k_m}{d}}.$$

Consider $\mathfrak{X}(\TT)$-grading of $\KK[Y]$. We have 
$$\KK[Y]_0=\KK[Y]^{\TT}=\KK[X][h]\cong\KK[X][z]/(z^d-f).$$
If 
$$\deg(y_1^{a_1}\ldots y_m^{a_m})=\deg(y_1^{b_1}\ldots y_m^{b_m}),$$ 
then there is $p\in \ZZ$ such that
$$
\forall i: a_i-b_i=p\frac{k_i}{d}.
$$ 
Therefore, for every $\alpha\in\mathfrak{X}(\TT)$ there exist nonnegative integers 
$s_1(\alpha), \ldots, s_m(\alpha)$ such that 
$$\KK[Y]_{\alpha}=y_1^{s_1(\alpha)}\ldots y_m^{s_m(\alpha)}\KK[Y]_0.$$

Let $\beta=\deg\partial\in\mathfrak{X}(\TT)$.
Let us define $\delta\colon \KK[Y]_0\rightarrow \KK[Y]_0$ by the rule
$$ \delta(F)=\frac{\partial(F)}{y_1^{s_1(\beta)}\ldots y_m^{s_m(\beta)}}, \qquad F\in\KK[Y]_0. $$

We have

$$
\delta(FG)=\frac{\partial(FG)}{y_1^{s_1(\beta)}\ldots y_m^{s_m(\beta)}}=\frac{\partial(F)G+F\partial(G)}{y_1^{s_1(\beta)}\ldots y_m^{s_m(\beta)}}=\delta(F)G+F\delta(G).
$$

Hence, $\delta$ is a derivation of $\KK[Y]_0$.

Define $\nu_\partial\colon \KK[Y]\setminus\{0\}\rightarrow \ZZ_{\geq 0}$, where $\ZZ_{\geq 0}$ is the set of nonnegative integers. For nonzero $g\in\KK[Y]$, let $l=\nu_\partial(g)+1$ be the least integer number such that $\partial^l(g)=0$. 
By \cite[Proposition~1.9]{Fr}, $\nu_\partial$ is a degree function on $\KK[Y]$. Therefore,
$$
\nu_\partial(F)=\nu_\partial(\partial(F))+1>\nu_\partial(\partial(F))\geq \nu_\partial\left(\frac{\partial(F)}{y_1^{s_1(\beta)}\ldots y_m^{s_m(\beta)}}\right)=\nu_\partial(\delta(F)).
$$
This implies that $\delta$ is an LND of $\KK[Y]_0\cong\KK[Z]$. That is $Z$ is not rigid.
\end{proof}

\section{Rigid trinomial hypersurfaces}\label{rhs}

Let us consider an affine space $\KK^n$. We fix a partition $n=n_0+n_1+n_2$, where $n_0$ is a nonnegative integer and $n_1,n_2$ are positive integers. The coordinates on $\KK^n$ we denote by $T_{ij}$, $i\in\{0,1,2\}$, $1\leq j\leq n_i$.

\begin{de}
A {\it trinomial hypersurface} is a subvariety $X$ in $\KK^n$ given by the unique equation
$$T_{01}^{l_{01}}T_{02}^{l_{02}}\ldots T_{0n_0}^{l_{0n_0}}+T_{11}^{l_{11}}T_{12}^{l_{12}}\ldots T_{1n_1}^{l_{1n_1}}+T_{21}^{l_{21}}T_{22}^{l_{22}}\ldots T_{2n_2}^{l_{2n_2}}=0,$$
where $l_{ij}$ are positive integers.
If $n_0=0$, we obtain the variety 
$$\VV\left(1+T_{11}^{l_{11}}\ldots T_{1n_1}^{l_{1n_1}}+T_{21}^{l_{21}}\ldots T_{2n_2}^{l_{2n_2}}\right).$$ 
We call such a variety a {\it trinomial hypersurface with free term}. 
If $n_0\neq 0$ then $X$ is a {\it trinomial hypersurface without free term}. 
\end{de}

By \cite[Theorem~1.2(i)]{HW} every trinomial hypersurface is irreducible and normal.

We denote the monomial $T_{i1}^{l_{i1}}T_{i2}^{l_{i2}}\ldots T_{in_i}^{l_{in_i}}$ by $T_i^{l_i}$. So, trinomial hypersurface has the form $\VV\left(T_0^{l_0}+T_1^{l_1}+T_2^{l_2}\right).$

If $n_i=1$ and $l_{i1}=1$, then $X\cong \KK^{n-1}$. 
If for all $i$ we have $l_{i1}n_i>1$, then $X$ is factorial if and only if the numbers 
$$d_i = \gcd(l_{i1} , . . . , l_{in_i} ),\qquad i = 0, 1, 2$$ 
are pairwise coprime, see \cite[Theorem~1.1 (ii)]{HH}. A factorial trinomial hypersurface without free term is rigid if and only if all $l_{ij}\geq 2$, see \cite[Theorem~1]{Ar}.

The following theorem gives a criterium for an arbitrary trinomial hypersurface to be rigid. It is a generalization of  \cite[Theorem~1]{Ar}.

\begin{theor}\label{rt}
A trinomial hypersurface $X=\VV\left(T_0^{l_0}+T_1^{l_1}+T_2^{l_2}\right)$ is not rigid if and only if one of the following conditions holds

1) there exist $i\in\{0, 1, 2\}$ and $a\in \{1, 2, \ldots, n_i\}$ such that $l_{ia}=1$,

2) $n_0\neq 0$ and there exist $i\neq j\in\{0, 1, 2\}$ and $a\in \{1, 2, \ldots, n_i\}$, $b\in \{1, 2, \ldots, n_j\}$ such that $l_{ia}=l_{jb}=2$ and for all 
$u\in \{1, 2, \ldots, n_i\}$, $v\in \{1, 2, \ldots, n_j\}$ the numbers $l_{iu}$ and $l_{jv}$ are even.
\end{theor}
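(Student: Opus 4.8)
The plan is to split the argument into the ``if'' direction (constructing LNDs) and the ``only if'' direction (ruling out LNDs), and to reduce everything to the $m$-suspension machinery of Section~\ref{tre}. First observe that a trinomial hypersurface $X=\VV(T_0^{l_0}+T_1^{l_1}+T_2^{l_2})$ can be written in several ways as an $m$-suspension. Indeed, after moving $T_2^{l_2}$ to the right-hand side we get $T_0^{l_0}+T_1^{l_1}=-T_2^{l_2}$; grouping the first two monomials, $X\cong\mathrm{Susp}(X',f,l_{21},\dots,l_{2n_2})$ where $X'$ is the affine space with coordinates the $T_{0j}$ and $T_{1j}$ (so that $\KK[X']$ is a polynomial ring) and $f=-(T_0^{l_0}+T_1^{l_1})$. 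Here $X'$ is a \emph{smaller} trinomial hypersurface with free term once we divide through appropriately, or literally an affine space if $n_0=0$. The ``fibre'' varieties $Y_j$ attached to this suspension (over $\LL_j=\overline{\KK(T_{2j})}$) are themselves trinomial-type varieties of one lower ``level'', so the whole analysis becomes an induction on $n_0+n_1+n_2$ via Lemmas~\ref{ridsus}, \ref{odddd}, \ref{tool}, \ref{prodol}, \ref{nod}. One should do this symmetrically: group $\{0,1\}$, $\{0,2\}$, and $\{1,2\}$ in turn, and use whichever grouping is convenient.

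For the ``if'' direction: if condition (1) holds, say $l_{ia}=1$, then using the grouping that isolates the block containing $T_{ia}$ on the left-hand side, $X$ is a $1$-suspension $\mathrm{Susp}(\cdot,\cdot,1,k_2,\dots)$ after extracting $T_{ia}$, OR one of the two ``complementary'' blocks is a polynomial ring so $X'$ is nonrigid; then Lemma~\ref{prodol} (applied possibly after Lemma~\ref{nod} to reduce the weight-vector to its gcd, which is $1$) immediately produces a nonzero LND. If condition (2) holds --- $n_0\ne0$, $l_{ia}=l_{jb}=2$ and all exponents in the $i$-th and $j$-th blocks are even --- I would group the blocks $i$ and $j$ on the left and the remaining block on the right; writing the left-hand side as a sum of two even-powered monomials $T_i^{l_i}+T_j^{l_j}$, one can factor each as a square times the other monomial's square, i.e. the left side has the shape $P^2+Q^2$, and this is exactly the situation of equation~(\ref{fff}). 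A direct construction of an LND killing the right-hand block and acting on $P,Q$ like a rotation-type derivation of $\KK[P,Q]/(\text{nothing})$—concretely, the standard LND of $\KK[u,v,w]/(uv-w^2)$ or of $\KK[u,v]$ with $u^2+v^2$ lifted through the monomial structure—gives a nonzero LND. This is where the explicit $\GA$-action of Lemma~\ref{nenul} (or \ref{dve}) in the later sections is built, and I would invoke that construction here.

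For the ``only if'' direction, which I expect to be the main obstacle: assume all $l_{ij}\ge2$, and if $n_0\ne0$ assume in addition that no two blocks are simultaneously all-even with a $2$ in each; I must show $X$ is rigid. By Lemma~\ref{flza} it suffices to kill all $T$-homogeneous LNDs, where $T$ is the complexity-one torus. The strategy is: pick a grouping $\{i,j\}$ of two blocks and view $X$ as a suspension; by induction the fibre varieties $Y_i,Y_j$ (which are trinomial hypersurfaces of smaller size, or become rigid by a base case) are rigid, so Lemma~\ref{osn} forces any $T$-homogeneous LND $\partial$ to satisfy $\partial(T_{ij'})=0$ for all $j'$ in the third block (the one not in $\{i,j\}$ that appears among the $y$'s) and to have controlled sign of degree with respect to $\mathfrak{h}$-gradings. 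Running this over the three possible groupings, and combining with Lemma~\ref{tool} (which kills the $y_j$'s when a related suspension with scaled weight is rigid) and Lemma~\ref{odddd} (which forces $y_1\mid\partial(g)$ when one weight is $2$ and another is odd), one pins down $\partial(T_{ij})$ on enough generators to force $\partial=0$. The delicate point is the ``even'' obstruction: if all exponents in two blocks are even, the gcd-reduction of Lemma~\ref{nod} lands on a weight $2$, and the $m=2$ base case $\VV(u^2+v^2+(\text{rigid monomial}))$ is \emph{not} rigid, so the induction genuinely needs the hypothesis of condition~(2) failing in order to always produce a rigid fibre; conversely when condition~(2) does fail, in every grouping at least one fibre $Y_i$ has gcd-reduced weight $\ge3$ or odd, hence (by the base-case analysis plus Lemma~\ref{odddd}) is rigid, closing the induction. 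Setting up the base cases cleanly — the smallest trinomial hypersurfaces, e.g. $n_i=1$ with small exponents, including the non-factorial $\VV(u^{2a}+v^{2b}+w^{2c})$-type surfaces — and checking the case bookkeeping across the three groupings is the technical heart; the factorial case of \cite{Ar} is recovered as the sub-case where condition~(2) is automatically vacuous because pairwise-coprime gcds cannot both be even.
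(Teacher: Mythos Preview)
Your overall architecture matches the paper: the ``if'' direction is done by explicit LNDs (the paper writes them out rather than invoking Lemma~\ref{prodol}, but your route works for condition~(1), and the LND for condition~(2) is exactly the $\delta_{i\pm}$ of Lemma~\ref{dve}), and the ``only if'' direction is an induction on $n$ using the suspension lemmas, principally Lemma~\ref{ridsus}. So the skeleton is right.

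The genuine gap is in the ``only if'' direction. You correctly sense that the induction can stall, but you misidentify both where and how. The paper's inductive step is: view $X$ as an $n_2$-suspension, form $X_j$ by deleting $T_{2j}$, and note that if \emph{two} of the $X_j$ are rigid then Lemma~\ref{ridsus} finishes. Condition~(1) never reappears in $X_j$ (all $l_{ij}\ge2$ survive), so the only obstruction is that condition~(2) might hold for $X_j$ even though it fails for $X$; tracking when this can happen for all but one $j$ in each of the three groupings forces $X$ into one of two specific shapes:
\[
\VV(x^2+y^b+z^2w^c)\ (b\neq2,\ 2\nmid c),\qquad \VV(x^2+y^2v^b+z^2w^c)\ (2\nmid b>1,\ 2\nmid c>1).
\]
These are the real base cases (along with the classical $\VV(x^a+y^b+1)$ from \cite{FM} and $\VV(x^a+y^b+z^c)$ from \cite{KZ}), and they are \emph{not} handled by any of the suspension lemmas you name. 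Each requires a bespoke argument: one shows via Lemma~\ref{odddd} that $z\mid\partial(x),\,z\mid\partial(y)$ for any $\TT$-homogeneous LND, then passes to $\overline{\KK(w)}$ and introduces a further auxiliary $\ZZ$-grading $(\deg x=\deg z=d,\ \deg y=2)$ to squeeze out a contradiction against the curve/surface base cases. Your proposal does not isolate these shapes and does not indicate this grading trick.

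Two further points. First, Lemmas~\ref{tool} and~\ref{nod} are not used in this proof at all; they appear later for Theorem~\ref{ridfac} and Propositions~\ref{m},~\ref{mm}. Your plan to invoke them here is a red herring. Second, your closing claim that ``when condition~(2) fails, in every grouping at least one fibre $Y_i$ has gcd-reduced weight $\ge3$ or odd'' is false as stated: for instance with $l_{21}=2,\ l_{22}=3$ the gcd is $1$, and Lemma~\ref{nod} then lands on an affine space, which is useless for proving rigidity. The induction does not close via gcd-reduction; it closes because the bad cases are finitely many explicit varieties that you must treat by hand.
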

\begin{proof}
If condition 1) or 2) holds, we can find a nonzero LND of $\KK[X]$.
In case 1) we can assume $i=1, a=1$. Then there is the following derivation $\delta$ on $\KK[X]$, see \cite{Ar}:

$$
\delta(T_{11})=\frac{\partial T_2}{\partial T_{21}},\qquad \delta(T_{21})=-\frac{\partial T_1}{\partial T_{11}},\qquad
\delta(T_{pq})=0 \ \text{if}\  (1,1)\neq(p,q)\neq(2,1).
$$
It is easy to check that $\delta(T_0^{l_0}+T_1^{l_1}+T_2^{l_2})=0$ and $\delta$ is an LND.

In case 2) we can assume $i=0, a=1$, $j=1, b=1$. Denote for all $u$ and $v$ 
$$l_{0u}=2m_{0u},\qquad l_{1v}=2m_{1v}.$$
Then $X$ is given by the equation
$$
T_{01}^2T_{02}^{2m_{02}}\ldots T_{0n_0}^{2m_{0n_0}}+T_{11}^2T_{12}^{2m_{12}}\ldots T_{1n_1}^{2m_{1n_1}}+T_{21}^{l_{21}}T_{22}^{l_{22}}\ldots T_{2n_2}^{l_{2n_2}}=0.
$$
For $k=0,1$ let us denote $\sqrt{T_{k}^{l_k}}=T_{k1}T_{k2}^{m_{k2}}\ldots T_{kn_k}^{m_{kn_k}}$. 

The variety $X$ is isomorphic to the variety 
$$
Y=\VV\left(T_{0}^{l_0}-T_{1}^{l_1}-T_2^{l_2}\right).
$$
There is the following derivation $\delta$ of $\KK[Y]$:
\begin{equation*}
\delta(T_{01})=\frac{\partial T_2^{l_2}}{\partial T_{21}}\cdot\frac{\sqrt{T_1^{l_1}}}{T_{11}}, 
\qquad\qquad
\delta(T_{11})=\frac{\partial T_2^{l_2}}{\partial T_{21}}\cdot\frac{\sqrt{T_0^{l_0}}}{T_{01}},
\end{equation*}
\begin{equation*}
\delta(T_{21})=2\frac{\sqrt{T_0^{l_0}}}{T_{01}}\cdot\frac{\sqrt{T_1^{l_1}}}{T_{11}}\left(\sqrt{T_0^{l_0}}-\sqrt{T_1^{l_1}}\right),
\qquad\qquad
\delta(T_{ij})=0\  \text{if}\   j\neq 1.
\end{equation*}
One can check that $\delta\left(T_0^{l_0}-T_1^{l_1}-T_2^{l_2}\right)=0$.
Therefore, $\delta$ is a derivation of $\KK[Y]$.
It is easy to see that 
$$\delta\left(\sqrt{T_0^{l_0}}-\sqrt{T_1^{l_1}}\right)=0.$$ 
This implies $\delta^2(T_{21})=0$. Therefore, 
$$\delta^{l_{21}+1}(T_{01})=\delta^{l_{21}+1}(T_{11})=0.$$ 
That is $\delta$ is an LND.

Now let us assume that $X$ is a trinomial hypersurface, for which conditions 1) and 2) do not hold. 
Let us prove that $X$ is rigid by induction on n.
Base of induction consists of 4 particular cases.

{\it Case 1.} $X\cong\VV\left(x^a+y^b+1\right)$. Since the condition 1) does not hold, we have $a, b>1$. It is proven in \cite[Section~2]{FM} that $X$ is rigid. 

{\it Case 2.} $X\cong\VV\left(x^a+y^b+z^c\right)$. Since conditions 1) and 2) do not hold, we have $a, b, c>1$ and there is no a pair among them consisting of two 2. But \cite[Lemma~4]{KZ} states that in this conditions $X$ is rigid.

{\it Case 3.} $X\cong\VV\left(x^2+y^b+z^2w^c\right), b\neq  2, 2\nmid c.$ It is easy to see, that $X$ is a 2-suspension over an affine plane 
$X\cong\mathrm{Susp}(\KK^2,-(x^2+y^b),2,c)$. Case 2 provides rigidity of $\VV\left(x^2+y^b+w^c\right)$. Therefore, Lemma~\ref{odddd} implies that for every $\TT$-homogeneous LND $\delta$ of $\KK[X]$ we have $z\mid \delta(x)$ and $z\mid\delta(y)$.

By Lemma~\ref{osn}, $\delta(w)=0$. Let us consider $\LL=\overline{\KK(w)}$. Then $\delta$ induces an LND $\partial$ of $\LL[Y]=\LL[x,y,z]/(x^2+y^d+z^2)$. We have $z\mid \partial(x)$ and  $z\mid \partial(y)$. 

Let us consider the following $\ZZ$-grading on $\LL[Y]$ 
$$
\deg x=\deg z=d, \deg y =2.
$$
Consider the decomposition of $\partial$ into the homogeneous summands $\partial=\sum_{i=l}^k \partial_i$. Since $z$ is homogeneous, we obtain $z\mid \partial_l(x)$ and $z\mid \partial_l(y)$. By Lemma~\ref{fl}, the derivation $\partial_l$ is locally nilpotent.

If $d\mid l$, then $d\nmid (l+2)$. Therefore, $y\mid \partial_l(y)$. Hence, $\partial_l(y)=0$. Consider $\mathbb{F}=\overline{\LL(y)}$. Then $\partial_l$ induces an LND of 
$\mathbb{F}[x,z]/(x^2+z^2+1)$. But Case 1 implies that there are no nonzero LNDs on $\mathbb{F}[x,z]/(x^2+z^2+1)$. Therefore, $\partial_l$ is zero.

If $d\nmid l$, then $d\nmid (l+d)$. Therefore, $y\mid \partial_l(z)$. Hence either $\partial_l(y)=0$ or $\partial_l(z)=0$. In both cases by the same arguments as above $\partial_l$ is zero.

It is easy to see, that $\partial_l=0$ implies $\delta=0$. That is $X$ is rigid.

{\it Case 4.} $X\cong\VV\left(x^2+y^2v^b+z^2w^c\right), 2\nmid b>1, 2\nmid c>1.$ 
Let us consider the following $\ZZ^2$-grading of $\KK[X]$ 
$$
\deg x=(0,0), \deg y=(-b,0), \deg v=(2,0), \deg z=(0,-c), \deg w = (0,2).
$$
Suppose $X$ is not rigid. Then by Lemma~\ref{flza} there exists a $\ZZ^2$-homogeneous derivation $\partial$.
Again we can consider $X$ as 2-suspension over an affine space 
$$X\cong\mathrm{Susp}(\KK^3,-(x^2+y^2v^b),2,c).$$ 
Case 3 provides rigidity of $\VV\left(x^2+y^2v^b+w^c\right)$. Therefore $\partial(w)=0$ and Lemma~\ref{odddd} implies that $z\mid\partial(y)$ and $z\mid\partial(x)$. Analogically $\partial(v)=0$, $y\mid\partial(z)$ and $y\mid\partial(x)$. Hence either $\partial(z)=0$ or $\partial(y)=0$. Considering the field $\overline{\KK(v,w)}$ we obtain the same situation as in Case 3. So we obtain a contradiction, which implies that $X$ is rigid.

{\it Inductive step}. Let $X$ be a a trinomial hypersurface, for which conditions 1) and 2) do not hold. Suppose $X$ is not rigid. Note that $X$ is an $n_2$-suspension over $\KK^{n_0+n_1}$. Let us consider the varieties 
$$X_j=\VV\left(T_0^{l_0}+T_1^{l_1}+\frac{T_2^{l_2}}{T_{2j}^{l_{2j}}}\right),\qquad j=1,2,\ldots, n_2.$$ 
Since the condition 1) does not hold for $X$, it does not hold for all $X_j$. If for some $X_j$ the condition 2) does not hold, then by induction hypothesis $X_j$ is rigid. By Lemma~\ref{ridsus} if for some $i\neq j$ the varieties $X_i$ and $X_j$ are rigid, then $X$ is rigid.
Since $X$ is not rigid, there is $i\in\{1,2,\ldots,n_2\}$ such that for all $j\neq i$, for $X_j$ the condition 2) holds. We have analogical situation for the other monomials. It is easy to see that this is possible if and only if $X$ is isomorphic to varieties from Cases 3 or 4.
Theorem~\ref{rt} is proved.
\end{proof}

\begin{ex}
In \cite[Section~9.1]{FMJ} the following trinomial variety is considered
$$
X=\VV\left(t^dx^a+y^b+z^c\right).
$$
Theorem~\ref{rt} implies that $X$ is not rigid if and only if one of the following conditions hold

1) $1\in\{a, b, c, d\}$;

2) $X\cong\VV\left(t^dx^a+y^2+z^2\right)$;

3)  $X\cong\VV\left(t^{2m}x^2+y^2+z^c\right)$.

This coincides with the assertions of \cite[Theorem~9.1]{FMJ} and \cite[Remark~9.2]{FMJ}.
\end{ex}

In \cite{AG} the automorphism group of a rigid trinomial hypersurface without free term is described. So, using Theorem~\ref{rt} we obtain description of the automorphism group of some trinomial hypersurfaces.

\begin{ex}
Let $X=\VV\left(x^6y^3+z^6u^3+v^6w^3\right)$. Then $X$ is not factorial and it is rigid by Theorem~\ref{rt}. Therefore, we can apply \cite[Theorem~5.5]{AG} to describe automorphisms of $X$. We have 
$$
\mathrm{Aut}(X)\cong \mathrm{S}_3\rightthreetimes (\left(\mathbb{Z}/3\mathbb{Z}\right)\times\left(\mathbb{Z}/3\mathbb{Z}\right)\times T),
$$
where $S_3$ permutes the monomials, $T$ is the 4-dimensional torus acting by 
$$
(t_1, t_2, t_3, t_4)\cdot (x, y, z, u, v, w)=(t_1 t_4 x, t_1^{-2}y, t_2t_4 z, t_2^{-2} u, t_3t_4 v, t_3^{-2} w),
$$
the first copy of $\mathbb{Z}/3\mathbb{Z}$ acts by multiplying $u$ by $\sqrt[3]{1}$ and the second one acts by multiplying $w$ by $\sqrt[3]{1}$.
\end{ex}

\section{Trinomial varieties}
\label{sec}

Let us remind \cite[Construction~1.1]{HW}.
Fix integers $r,n>0$, $m\geq 0$, and $q\in\{0,1\}$. And fix a partition 
$$n=n_q+\ldots+n_r,\qquad n_i>0.$$ 
For each $i=q,\ldots, r$ fix a tuple $l_i=(l_{i1}, l_{i2},\ldots, l_{in_i})$ of positive integers and define a monomial
$$
T_i^{l_i}=T_{i1}^{l_{i1}}\ldots T_{in_i}^{l_{in_i}}\in\KK\left[T_{ij},S_k\mid q\leq i\leq r, 1\leq j\leq n_i, 1\leq k\leq m \right].
$$
We write $\KK[T_{ij} , S_k ]$ for the above polynomial ring. Now we define a ring $R(A)$ for some input data $A$.

{\it Type 1.}  $q=1$, $A=(a_1,\ldots, a_r)$, $a_j\in\KK$, if $i\neq j$, then $a_i\neq a_j$.
Set $I = \{1,...,r-1\}$ and for every $i \in I$ let us define a polynomial
$$
g_i=T_i^{l_i}-T_{i+1}^{l_{i+1}}-(a_{i+1}-a_i)\in\KK[T_{ij},S_k].
$$

{\it Type 2.} $q=0$,
$$
A=
\begin{pmatrix}
a_{10}&a_{11}&a_{12}&\ldots&a_{1r}\\
a_{20}&a_{21}&a_{22}&\ldots&a_{2r}
\end{pmatrix}
$$
is a $2\times (r+1)$-matrix with pairwise linearly independent columns. 
Set $I = \{0,...,r-2\}$ and define for every $i \in I$ a polynomial
$$
g_i=\det
\begin{pmatrix}
T_i^{l_i}&T_{i+1}^{l_{i+1}}&T_{i+2}^{l_{i+2}}\\
a_{0i}&a_{0i+1}&a_{0i+2}\\
a_{1i}&a_{1i+1}&a_{1i+2}
\end{pmatrix}
\in\KK[T_{ij},S_k].
$$

For both Types we define $R(A)=\KK[T_{ij},S_k]/(g_i\mid i\in I)$.

\begin{de}\label{trv}
The variety $X=\mathrm{Spec}(R(A))$ for some $A$ we call a {\it trinomial variety}.
\end{de}

By \cite[Theorem~1.2(i)]{HW} every trinomial variety is irreducible and normal.
We need the following fact.

\begin{prop}\cite[Theorem~1.2($\mathrm{iv}$)]{HW}\label{faktor}
Suppose $r \geq 2$ and $n_il_{ij} > 1$ for all $i, j$. Then 

(a) in case of Type 1, $R(A)$ is factorial if and only if one has $\gcd(l_{i1},\ldots, l_{in_i}) = 1$ for $i = 1,\ldots,r$;

(b) in case of Type 2, $R(A)$ is factorial if and only if the numbers $d_i := \gcd(l_{i1},\ldots , l_{in_i} )$ are pairwise coprime.
\end{prop}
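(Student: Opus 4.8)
By \cite[Theorem~1.2(i)]{HW} the algebra $R(A)$ is a normal Noetherian domain, and a normal Noetherian domain is a unique factorisation domain if and only if its divisor class group vanishes; so the whole statement reduces to computing $\operatorname{Cl}(X)$ for $X:=\spec R(A)$. First I would dispose of the inessential data: the variables $S_1,\dots,S_m$ only adjoin a polynomial ring and do not influence factoriality, so one may take $m=0$; and the hypotheses $r\ge 2$ and $n_il_{ij}>1$ serve precisely to exclude the degenerate situations in which some $g_i$ is linear in one of the $T_{ij}$ — there one eliminates that variable and reduces, by induction on the number of variables, to a polynomial ring or to an $R(A')$ of the same type with fewer variables. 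So from now on $X$ is a genuinely nondegenerate trinomial variety.

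To compute $\operatorname{Cl}(X)$ I would work on the open set $X^{\circ}=X\setminus\bigcup_{i,j}\VV(T_{ij})$ where all coordinates are invertible. Using $\OO(X)^{\times}=\KK^{\times}$ and the localisation exact sequence for class groups,
$$\OO(X^{\circ})^{\times}/\KK^{\times}\ \xrightarrow{\ \operatorname{div}\ }\ \bigoplus_{E}\ZZ\cdot E\ \longrightarrow\ \operatorname{Cl}(X)\ \longrightarrow\ \operatorname{Cl}(X^{\circ})\ \longrightarrow\ 0,$$
with $E$ running over the prime divisors of $X$ contained in some $\VV(T_{ij})$. The subtorus $\widehat T\subseteq(\KK^{\times})^{n}$ stabilising the monomials $T_i^{l_i}$ acts on $X^{\circ}$ with one-dimensional (rational) quotient — this is the complexity-one structure — and from this one checks that $\operatorname{Cl}(X^{\circ})=0$ and that $\OO(X^{\circ})^{\times}/\KK^{\times}$ is free abelian, generated by the classes of the $T_{ij}$ together with the additional root functions that appear exactly when some $d_i:=\gcd(l_{i1},\dots,l_{in_i})$ exceeds $1$. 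On the other side, putting $T_{ij}=0$ in the defining equations annihilates the monomial $T_i^{l_i}$, and inspecting the surviving two-term relations shows that $\VV(T_{ij})\cap X$ breaks into a definite number of prime components, controlled by the $d_i$, along each of which $T_{ij}$ vanishes to an order dictated by the $l_{ij}$. Feeding this into the sequence presents $\operatorname{Cl}(X)$ as the cokernel of an explicit integral matrix built from the exponents $l_{ij}$ (and, in Type~2, from the columns of $A$).

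The final step is a Smith normal form computation with that matrix: its cokernel is trivial exactly when all $d_i=1$ in the Type~1 case, and exactly when the $d_i$ are pairwise coprime in the Type~2 case. For the implication ``factorial $\Rightarrow$ the $\gcd$-condition'' it is cleaner to produce the obstruction directly: if a prime $\ell$ divides $d_i$ (Type~1) or divides two of the $d_i$ (Type~2), then $T_i^{l_i}=u^{\ell}$ with $u=\prod_j T_{ij}^{l_{ij}/\ell}$ a monomial, and combining this with the remaining relations one exhibits a height-one prime of $X$ which is not principal, i.e.\ a nonzero $\ell$-torsion class in $\operatorname{Cl}(X)$. I expect the main obstacle to be the middle paragraph: one must pin down with care (i) the unit group $\OO(X^{\circ})^{\times}/\KK^{\times}$ — delicate, since it is the disconnectedness of the generic fibres of $X^{\circ}$ over the quotient curve that produces the extra root units, and it is through them that the $d_i$ enter the picture — and (ii) the exact prime decomposition of each $\VV(T_{ij})\cap X$ with the vanishing orders of the $T_{ij}$; once these are settled the remaining lattice bookkeeping is mechanical.
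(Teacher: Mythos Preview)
The paper does not prove this proposition at all: it is stated with the attribution \cite[Theorem~1.2(iv)]{HW} and used as a black box, so there is no proof in the paper to compare your proposal against.

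That said, your outline is a reasonable sketch of how such a result is typically established (and is in the spirit of the arguments in \cite{HH,HW}): reduce factoriality of a normal domain to vanishing of $\operatorname{Cl}(X)$, localise away from the coordinate hyperplanes to a torus-like open set $X^{\circ}$, and present $\operatorname{Cl}(X)$ as the cokernel of the divisor map from $\OO(X^{\circ})^{\times}/\KK^{\times}$ into the free group on the prime components of $X\setminus X^{\circ}$. The two places you flag as delicate are indeed the substantive work: identifying the extra ``root'' units on $X^{\circ}$ coming from $d_i>1$, and decomposing each $\VV(T_{ij})\cap X$ into its primes with the correct multiplicities. Your proposal is honest that these steps are not yet carried out; as written it is a plan rather than a proof, but the plan is sound and matches the literature's approach.
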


Now let us give a criterium for a factorial trinomial variety to be rigid.

\begin{theor}\label{ridfac}
Let $X$ be a trinomial variety of Type 1 or a factorial trinomial variety of Type~2.
Then  $X$ is not rigid if and only if one of the following holds:

1) $m\neq 0$,

2) In case of Type 1 there is $b\in\{1,\ldots r\}$ such that for each $i\in\{1,\ldots r\}\setminus\{b\}$ there is $j(i)\in\{1,\ldots n_i\}$ such that $l_{ij(i)}=1$,

3) In case of Type 2 there are $b$ and $c$ in $\{0,\ldots r\}$ such that for each $ i\in\{0,\ldots r\}\setminus\{b,c\}$ there is $j(i)\in\{1,\ldots n_i\}$ such that $l_{ij(i)}=1$.
\end{theor}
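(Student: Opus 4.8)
The plan is to prove the two implications of the criterion separately, bootstrapping from the hypersurface criterion (Theorem~\ref{rt}) by means of the suspension lemmas of Section~\ref{tre}.

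\emph{Sufficiency.} Since the variables $S_k$ do not occur in the relations $g_i$, one has $X\cong X_0\times\AA^m$ with $X_0$ the trinomial variety having the same matrix $A$ but $m=0$; hence for $m\neq0$ the derivation $\partial/\partial S_1$ is a nonzero LND, so $X$ is not rigid. Assume now $m=0$. Permuting the groups of $T$-variables does not change $X$, because $(g_i\mid i\in I)$ only records that $(T_q^{l_q},\dots,T_r^{l_r})$ lies in a fixed line (Type~1) or plane (Type~2); so I may list the exceptional group $b$ (Type~1), resp. the groups $b,c$ (Type~2), first and realize $X$ as an iterated suspension: $Y_0=\AA^{n_b}$, resp. $Y_0=\AA^{n_b+n_c}$, and $Y_s=\mathrm{Susp}(Y_{s-1},f_s,(l_{i_sj})_j)$ for $s=1,\dots,N$ with $Y_N=X$, where $f_s\in\KK[Y_{s-1}]$ is nonconstant (for Type~1 it is $T_b^{l_b}$ plus a constant, for Type~2 a nonzero combination of $T_b^{l_b}$ and $T_c^{l_c}$) and each $Y_s$ is again a trinomial variety, in particular irreducible by \cite[Theorem~1.2(i)]{HW}. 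If condition (2), resp. (3), holds, every group $i_s$ occurring in the tower has a variable of exponent $1$; placing that variable first, each step reads $Y_s=\mathrm{Susp}(Y_{s-1},f_s,1,l_{i_s2},\dots)$. Since $Y_0$ is an affine space, hence not rigid, Lemma~\ref{prodol} applied $N$ times shows that $X$ is not rigid.

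\emph{Necessity.} One must show that if $m=0$ and neither (2) nor (3) holds --- equivalently, at least two of the groups (Type~1), resp. at least three (factorial Type~2), are \emph{heavy}, meaning all their exponents are $\geq2$ --- then $X$ is rigid. I would argue by induction on $n=\sum_i n_i$. If $r=2$, then $X$ is a trinomial hypersurface, with free term in Type~1 ($n_0=0$) and without free term in Type~2 ($n_0>0$); all relevant exponents being $\geq2$, condition~1 of Theorem~\ref{rt} fails, while condition~2 is vacuous for Type~1 and, for Type~2, would force two of the numbers $d_i=\gcd(l_{i1},\dots,l_{in_i})$ to be even, contradicting their pairwise coprimality (Proposition~\ref{faktor}); so $X$ is rigid. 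If all $n_i=1$, then $X$ is an affine curve (Type~1) or an affine surface (Type~2): in the curve case a nontrivial $\GA$-action would force $X\cong\AA^1$, and the relations would then yield an identity $p(t)^{l_{i1}}-p_1(t)^{l_{11}}=\mathrm{const}$ in $\KK[t]$ with exponents $\geq2$, impossible by the Mason--Stothers theorem, so $X$ is rigid (the surface subcase is discussed below). For the inductive step $r\geq3$ with some $n_b\geq2$, write $X=\mathrm{Susp}(X',f,l_{b1},\dots,l_{bn_b})$, where $X'$ is obtained by deleting group $b$ (after a permutation I take $b$ to be an end group, so $X'$ is again of the same type). By Lemma~\ref{nod}, rigidity of the collapsed variety $\mathrm{Susp}(X',f,d_b)$ with $d_b=\gcd(l_{b1},\dots,l_{bn_b})$ implies rigidity of $X$; and $\mathrm{Susp}(X',f,d_b)$ is a trinomial variety of the same type with strictly fewer variables, with all $d_i$ (and hence factoriality) unchanged, and with group $b$ heavy whenever $d_b\geq2$ (and deleted when $d_b=1$). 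Collapsing successively all non-heavy multivariable groups and all heavy multivariable groups with $d_b\geq2$ keeps the number of heavy groups above the threshold and leads either to the base cases above or to a trinomial variety all of whose multivariable groups are heavy with $d_b=1$; for such a $b$ one applies Lemma~\ref{ridsus} instead, which reduces rigidity of $X$ to rigidity of at least two of the specializations $Y_s=\mathrm{Susp}(X'(\LL_s),f,(l_{bj})_{j\neq s})$, each a trinomial variety over $\LL_s=\overline{\KK(T_{bs})}$ with one fewer variable and with group $b$ still heavy, to which the induction hypothesis applies once factoriality has been arranged to persist.

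\emph{The main obstacle} lies in two places. First, the base subcase of affine surfaces with all $n_i=1$ and at least four groups (Type~2): here $X$ carries an effective complexity-one torus action with all $l_{ij}\geq2$ and pairwise coprime $d_i$, and one must show directly that it is rigid --- a $T$-homogeneous LND is, by Lemma~\ref{edin}, supported on a single variable of each group, and specializing the remaining variables reduces the question to the rigidity of a Pham--Brieskorn-type hypersurface already covered by Theorem~\ref{rt}. Second, the verification in the inductive step for Type~2 that a group $b$ and at least two specializations $s$ can always be chosen so that the $Y_s$ stay factorial: this is a finite analysis of exponent patterns, in the spirit of Cases~1--4 in the proof of Theorem~\ref{rt}, resting on the observation that $Y_s$ can fail to be factorial only if deleting $T_{bs}$ produces two groups all of whose exponents are even --- a configuration that pairwise coprimality of the $d_i$ makes very hard to realize, so that a safe choice always exists. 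I expect this second point to be where most of the real work goes.
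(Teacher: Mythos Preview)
Your sufficiency argument matches the paper's. For Type~1 necessity your reduction via Lemmas~\ref{ridsus} and~\ref{nod} is parallel to the paper's, but for the endgame the paper does not use Mason--Stothers: once all $n_i=1$ and all remaining $\widetilde l_{i1}\geq2$, it applies Makar-Limanov's lemma (Lemma~\ref{sumdv}) directly to each relation $\widetilde T_{i1}^{\widetilde l_{i1}}-\widetilde T_{i+1,1}^{\widetilde l_{i+1,1}}=\mathrm{const}\in\Ker\partial$ (with $u=1$, $v=-1$), which forces $\partial(\widetilde T_{i1})=0$ for every $i$ and hence $\partial=0$. This handles all $r\geq2$ at once and bypasses the curve/$\AA^1$ step.

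For Type~2 necessity there is a genuine gap. Both obstacles you flag --- the surface base case with all $n_i=1$, and the preservation of factoriality under the specializations of Lemma~\ref{ridsus} --- are real, and neither of your proposed fixes works as stated. In particular, your appeal to Lemma~\ref{edin} in the surface case is vacuous: when $n_r=1$ the suspension torus $\TT$ is zero-dimensional, so $\TT$-homogeneity imposes no constraint and you get no reduction to a Pham--Brieskorn hypersurface. The paper avoids both obstacles by a different route. It inducts on $r$ rather than on $n$, and the inductive step uses an external input, Lemma~\ref{roo} (from \cite[Corollary~4.4]{FMJ}): if $R$ carries a $\ZZ$-grading $\mathfrak g$ under which $f$ is homogeneous of nonzero degree $\phi$ with $\gcd(n,\phi)=1$, then every $\mathfrak h$-homogeneous LND of $B=R[z]/(z^n+f)$ annihilates $\mathrm{ML}(R)$. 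With $R=\KK[Z]$ (where $Z$ is obtained by dropping group $r$), $n=d_r$, and a grading built so that $\deg f=\prod_{i<r} d_i$, the coprimality $\gcd(d_r,\prod_{i<r} d_i)=1$ is precisely the factoriality hypothesis; since $Z$ is rigid by induction on $r$, $\mathrm{ML}(R)=R$ and every homogeneous LND of $\KK[X]$ vanishes. For $n_r\geq2$ one first collapses group $r$ to a single variable of exponent $d_r$ via Lemma~\ref{nod}, which leaves all $d_i$ --- and hence factoriality --- untouched, and then applies the $n_r=1$ case. Thus there is no separate surface base case, and Lemma~\ref{ridsus} (with its factoriality headaches) is never invoked for Type~2; Lemma~\ref{roo} is the missing ingredient in your outline.
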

\begin{proof}
If $m\neq 0$, we have an LND $\frac{\partial}{\partial S_1}$ of $R(A)$. So, in the sequel we assume $m=0$.

If $r=2$, $X$ is a trinomial hypersurface and the assertion of the theorem follows from Theorem~\ref{rt}.

Let us prove by induction on $r$ that if one of conditions 2) and 3) holds, then $X$ is not rigid. Base of induction $r=2$ is already proved.

Assume $r>2$.
If condition 2) holds, we can assume that $b=1$. Indeed, the system of equations of $X$ is not symmetric with respect to groups $T_i$. But we can replace equations by linear combinations of them in such a way that we obtain system of equations of Type 1 with permuting groups. Analogically, if the condition 3) holds, we can assume $b=0, c=1$. 

Consider 
$$\mathcal{A}=\KK\left[T_{ij}\mid q\leq i\leq r-1, 1\leq j\leq n_i \right],$$
$$Z=\mathrm{Spec}\left(\mathcal{A}/(g_i\mid i\in I\setminus\{r-1\})\right) \text{in case of Type 1},$$
$$Z=\mathrm{Spec}\left(\mathcal{A}/(g_i\mid i\in I\setminus\{r-2\})\right) \text{in case of Type 2}.$$
Then $X=\mathrm{Susp}(Z,f,l_{r1},\ldots,l_{rn_r})$, where $f=T_{r-1}^{l_{r-1}}+p$ for some $p\in\KK$ in case of Type 1 and $f=p_1T_{r-2}^{l_{r-2}}+p_2T_{r-1}^{l_{r-1}}$ for some $p_1, p_2\in\KK$ in case of Type 2. Lemma~\ref{prodol} implies that if $Z$ is not rigid, then $X$ is not rigid. This completes the inductive step.   

Now, let us prove that if neither 2) nor 3) holds, then $X$ is rigid. 

{\bf Case of Type 1.} Suppose, condition 2) does not hold, but $X$ is not  rigid. We have seen that $X$ is an $n_r$-suspension over $Z$. By Lemma~\ref{ridsus} we can decrease $m$ until $m=1$. Analogically, we can decrease the numbers of variables $T_{ij}$ for all $i$. So, we obtain a trinomial variety $ \widetilde{X}$ such that
\begin{itemize}
\item $\widetilde{X}$ does not satisfy the condition 2), 
\item $\widetilde{X}$  is not rigid,
\item for $\widetilde{X}$ we have $n_1=n_2=\ldots=n_r=1$. 
\end{itemize}
We use notations $\widetilde{T}_{ij}$ and $\widetilde{l}_{ij}$ for variables and their powers in equations of $\widetilde{X}$. If $\widetilde{l}_{ij}=1$, then $\widetilde{X}$ is isomorphic to a trinomial variety with less $r$. We assume that we have eliminated all variables $\widetilde{T}_{ij}$ with $\widetilde{l}_{ij}=1$. So now all $\widetilde{l}_{ij}\geq 2$ and since the condition 2) does not hold, $r\geq 2$.

We need the following lemma, which is Lemma~2 from \cite{LML}, see also  \cite[Theorem~2.2~(a)]{FMJ}.

\begin{lem}\label{sumdv}
Let $B$ be a commutative $\KK$-domain and $\partial$ be non-zero LND of $B$. Suppose
$u,v \in \Ker\partial$ and $x, y \in B$ are non-zero, and $a$,~$b$ are integers with $a, b \geq 2$. Assume $ux^a+vy^b\neq 0$.  If $\partial(ux^a+vy^b)=0$, then $\partial(x)=\partial(y)=0$.
\end{lem}

In our situation $B=\KK[\widetilde{X}]$, $u=1$, $v=-1$, $x=\widetilde{T}_{i1}$, $y=\widetilde{T}_{i+1,1}$, $a=\widetilde{l}_{i1}$, $b=\widetilde{l}_{i+1,1}$. Then $ux^a+vy^b=\widetilde{T}_{i1}^{\widetilde{l}_{i1}}-\widetilde{T}_{i+1,1}^{\widetilde{l}_{i+1,1}}\neq 0\in\KK$. 
Hence, $\partial(ux^a+vy^b)=0$. Then, by Lemma~\ref{sumdv}, $\partial(\widetilde{T}_{i1})=\partial(\widetilde{T}_{i+1,1})=0$. Since this is true for all~$i$, $\widetilde{X}$ is rigid. Therefore, $X$ is rigid.

{\bf Case of Type 2.} 
We use induction on $r$. Base of induction $r=2$ is already proved in~Theorem~\ref{rt}.

Since the condition 3) does not hold, there are $a,b,c\in\{0,1,\ldots,r\}$ such that for all possible $j$ we have $l_{aj}\geq 2, l_{bj}\geq 2, l_{cj}\geq 2$. We can assume that $a=0, b=1, c=2$. 
Again consider $Z$. Since $X$ is factorial any two of $d_i$ are coprime. Hence, $Z$ is factorial too. In the step of induction it is sufficiently to show that if $r\geq 3$ rigidity of $Z$ implies rigidity of $X$. 

There are two cases.

{\it Case A. $n_r=1$.}
If $l_{r1}=1$, then $X\cong Z$. So, in the sequel we assume $l_{r1}\geq 2$.

\begin{lem}\cite[Corollary~4.4]{FMJ}\label{roo}
Suppose $R$ is a $\KK$-domain, $\mathfrak{g}$ is a $\ZZ$-grading of $R$, and $f \in R$ is homogeneous of degree $ \phi\in \ZZ$ ($\phi\neq 0$). Assume that $n\geq 2$ is an integer such that $B = R[z]/(f +z^n)$ is a domain, and let $\mathfrak{h}$ be the $\ZZ$-grading of $B$ defined by $\mathfrak{h} = (n\mathfrak{g},\phi)$. If $\gcd(\phi,n) = 1$, then 
$\partial(\mathrm{ML}(R)) = 0$ for every $\mathfrak{h}$-homogeneous LND $\partial$ of~$B$.
\end{lem}

We would like to use this lemma in case when 
$$
R=\KK[Z], B=\KK[X], f=-(p_1T_{r-2}^{l_{r-2}}+p_2T_{r-1}^{l_{r-1}}), z=T_{r1}, n=l_{r1}.
$$ 
There are integers $u_{i1}, \ldots, u_{in_i}$
such that $u_{i1}l_{i1}+\ldots+u_{in_i}l_{in_i}=d_i$. Let us define a $\ZZ$-grading $\mathfrak{g}$ of $R$ by the rule
$$
\deg T_{ij}=u_{ij}d_1d_2\ldots d_{i-1}d_{i+1}\ldots d_{r-1}.
$$

Then 
$$\deg f= \prod_{k=1}^{r-1}d_i.$$

Since $n_r=1$, we have $l_{r1}=d_r$. By Proposition~\ref{faktor}, 
$$\gcd(d_r,\deg f)=\gcd\left(d_r,\prod_{k=1}^{r-1}d_i\right)=1.$$

We see that all conditions of Lemma~\ref{roo} are satisfied. Therefore, for every $\mathfrak{h}$-homogeneous LND $\partial$ we obtain $\partial(\mathrm{ML}(R)) = 0$. But we assume that $R=\KK[Z]$ is rigid. That is $\mathrm{ML}(R)=R$. We obtain that if $i\leq r-1$, then $\partial(T_{ij})=0$ for all $j$. This implies $\partial (T_{r1})=0$, so $\partial\equiv 0$. Lemma~\ref{flz} implies that $X$ is rigid.

{\it Case B. $n_r\geq 2$.}
We see that $X$ is an $n_r$-suspension over $Z$. Suppose $X$ is not rigid. By Lemma~\ref{nod} the variety
$$V=\VV\left(f-y^{d_r}\right)\subset Z\times\KK$$ 
is not rigid. If $d_r=1$ then $V\cong Z$. But $Z$ is rigid. Therefore, $d_r\geq 2$. Hence, $V$ is a factorial trinomial variety, such that condition 3) does not hold. Therefore, we are in Case A. Hence, $V$ is rigid. This implies that $X$ is rigid.
Theorem \ref{ridfac} is proved.
\end{proof}

\begin{ex}
The following subvariety $X$ in $\KK^4$ is rigid
$$
\begin{cases}
x^2+y^2+z^2=1;\\
y^2+z^2+w^2=1.
\end{cases}
$$
Indeed, it is easy to see that $X$ is isomorphic to the variety Y given by
$$
\begin{cases}
T_{21}T_{22}-T_{11}^2=1;\\
T_{31}^2-T_{21}T_{22}=1.\\
\end{cases}
$$
If we take $A=(1,2,3)$, $T_1^{l_1}=T_{11}^2$, $T_2^{l_2}=T_{21}T_{22}$ and $T_3^{l_3}=T_{31}^2$, we obtain $Y$ as a trinomial variety of Type 1.  Therefore, Theorem~\ref{ridfac} implies rigidity of $Y$.

\end{ex}

\section{Flexible trinomial hypersurfaces}

In Section~\ref{rhs} we have obtained a criterium for trinomial hypersurface to be rigid.
Now let us consider trinomial hypersurfaces which are not rigid. The opposite to rigidity situation is flexibility. Let us prove that some trinomial hypersurfaces are flexible. 
We introduce five special types of trinomial hypersurfaces, which we denote $H_1$-$H_5$.

\begin{tabular}{c|c}

{\tiny\ }&{\tiny\ }\\
$H_1$&
$\VV\left(T_0^{l_0}+T_1^{l_1}+T_{21}T_{22}\ldots T_{2n_2}\right)$\\
{\tiny\ }&{\tiny\ }\\
\hline
{\tiny\ }&{\tiny\ }\\
$H_2$&$\VV\left(T_{01}^2T_{02}^2\ldots T_{0n_0}^2+T_{11}^2T_{12}^2\ldots T_{1n_1}^2+T_2^{l_2}\right)$\\
{\tiny\ }&{\tiny\ }\\
\hline
{\tiny\ }&{\tiny\ }\\
$H_3$&$\VV\left(T_0^{l_0}+T_{11}T_{12}^{l_{12}}\ldots T_{1n_1}^{l_{1n_1}}+T_{21}T_{22}^{l_{22}}\ldots T_{2n_2}^{l_{2n_2}}\right)$\\
{\tiny\ }&{\tiny\ }\\
\hline
{\tiny\ }&{\tiny\ }\\
$H_4$&$
\VV\left(T_{01}T_{02}^{l_{02}}\ldots T_{0n_0}^{l_{0n_0}}+T_{11}^2T_{12}^{2m_{12}}\ldots T_{1n_1}^{2m_{1n_1}}+T_{21}^2T_{22}^{2m_{22}}\ldots T_{2n_2}^{2m_{2n_2}}\right)$\\
{\tiny\ }&{\tiny\ }\\
\hline
{\tiny\ }&{\tiny\ }\\
$H_5$&$\VV\left(T_{01}^2T_{02}^{2m_{02}}\ldots T_{0n_0}^{2m_{0n_0}}+T_{11}^2T_{12}^{2m_{12}}
\ldots T_{1n_1}^{2m_{1n_1}}+T_{21}^2T_{22}^{2m_{22}}\ldots T_{2n_2}^{2m_{2n_2}}\right)$\\
{\tiny\ }&{\tiny\ }\\
\end{tabular}

These types have nontrivial intersections. For example, the variety $\VV\left(T_{01}^2+T_{11}^2+T_{21}^2\right)$ belongs to Type $H_2$ and Type $H_5$.
\begin{re}
We use the same notations as in Section~\ref{rhs}. In particular, $n_0\geq 0$, $n_1,n_2>0$. Types of hypersurfaces we denote up to permutation of variables. For example, the variety $\VV\left(1+T_{11}T_{12}+T_{21}^3 \right)$ is of Type $H_1$, but the variety $\VV\left(1+T_{11}^2+T_{21}^2\right)$ is not of Type $H_2$.
\end{re}

\begin{theor}\label{fltr}
Trinomial hypersurfaces of Types $H_1$-$H_5$ are flexible.
\end{theor}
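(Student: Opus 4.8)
The strategy is to verify flexibility for each of the five families $H_1$–$H_5$ separately, using Theorem~\ref{aaa}: it suffices to exhibit, for every regular point $x\in X^{\mathrm{reg}}$, enough $\GA$-actions whose orbit tangents span $T_xX$, or — more economically — to show $\mathrm{SAut}(X)$ acts transitively on $X^{\mathrm{reg}}$. Since the variety carries a torus $T$ of complexity one, the guiding principle is: produce a sufficiently rich collection of $T$-homogeneous LNDs (the ones constructed in Lemmas~\ref{nenul} and~\ref{dve}, referred to in the introduction), so that together with the torus they generate a group acting transitively on $X^{\mathrm{reg}}$. First I would fix notation and recall that transitivity on $X^{\mathrm{reg}}$ is a local-to-global statement: by \cite[Theorem~0.1 and its proof]{AFKKZ} it is enough to find one regular point $x_0$ such that the $\GA$-orbit tangents at $x_0$ span $T_{x_0}X$, provided $\mathrm{SAut}(X)$ together with $T$ already acts transitively — and in fact the cleanest route is to show directly that the tangent space at every regular point is spanned by velocity vectors of explicit one-parameter subgroups.

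**Case analysis.** For $H_1=\VV\left(T_0^{l_0}+T_1^{l_1}+T_{21}T_{22}\ldots T_{2n_2}\right)$, the last monomial has all exponents equal to $1$; here the "Winkelmann-type" LNDs of the form $\delta(T_{2a})=\partial T_k/\partial(\cdot)$ familiar from case 1) of Theorem~\ref{rt} are available for each index $2a$ and for each of the other two monomials, and I would combine these with the torus to move any regular point to a normal form, then check spanning of the tangent space there by a direct Jacobian computation. For $H_3$ (two monomials having a variable with exponent $1$) the same type of LND is available with respect to both such monomials, and additionally one can use partial derivatives in the third group; one shows these span. For $H_2$, $H_4$, $H_5$ the relevant LNDs are the "square-root" derivations $\delta$ appearing in case 2) of Theorem~\ref{rt} (and its generalizations in Lemmas~\ref{nenul}, \ref{dve}): when two monomials are perfect squares of monomials, the isomorphism $X\cong\VV\left(T_0^{l_0}-T_1^{l_1}-T_2^{l_2}\right)$ lets one write down LNDs $\delta$ with $\delta(T_{01}),\delta(T_{11})$ proportional to $\sqrt{T_0^{l_0}}/T_{01}$, $\sqrt{T_1^{l_1}}/T_{01}$ etc. Permuting which variable in each square-monomial plays the role of "$T_{k1}$", and permuting the roles of the three monomials (for $H_5$, where all three are squares), yields a whole family of such LNDs; one then argues that at a generic — hence, by homogeneity and the torus action, at every — regular point these span the tangent space.

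**The main obstacle.** The hard part is the spanning/transitivity verification in the "square" cases $H_2$, $H_4$, $H_5$: the LNDs $\delta$ constructed there vanish on all variables with $j\neq 1$, so a single $\delta$ moves the point only in the directions $\partial/\partial T_{01},\partial/\partial T_{11},\partial/\partial T_{21}$; to fill out the remaining tangent directions one must use LNDs obtained after permuting which variable within a block carries exponent $2$ (equivalently, applying an automorphism that swaps $T_{k1}$ with another $T_{ka}$ of the same parity class), and one must be careful that such permutations are genuine automorphisms of $X$ only when the exponents match up appropriately — otherwise one works over the algebraic closure of the field generated by the "frozen" variables, as in Cases 3 and 4 of the proof of Theorem~\ref{rt}, and descends. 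Ensuring that the resulting collection of orbit directions spans $T_xX$ at every regular point — including points lying on coordinate hyperplanes where some $T_{ij}=0$ — and not merely generically, is where the real work lies; I expect to handle the regular locus by first showing $\mathrm{SAut}(X)$ moves any regular point into the open torus orbit (using the exponent-$1$ or square-root LNDs to decrease the number of vanishing coordinates one at a time) and then checking spanning at a single point of that orbit, which is a finite linear-algebra computation.
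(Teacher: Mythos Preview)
Your overall plan---prove transitivity of $\mathrm{SAut}(X)$ on $X^{\mathrm{reg}}$ using the explicit LNDs of Lemmas~\ref{nenul} and~\ref{dve}, treating each type $H_1$--$H_5$ separately---is exactly what the paper does. The paper does not bother with tangent-space spanning at a distinguished point; it argues pure transitivity throughout, by iteratively applying Lemma~\ref{nenul} or Lemma~\ref{dve} to adjust one block of coordinates at a time until an arbitrary regular point $P$ is mapped to a fixed regular point $Q$ with all coordinates nonzero. Your hybrid route (move to the open torus orbit, then check spanning at one point) would also work, but is slightly less direct.

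One point of confusion deserves correction. In the ``square'' cases you worry that permuting which variable within a block plays the role of $T_{k1}$ may fail when the other exponents $2m_{kj}$ exceed~$2$, and you propose to repair this by passing to $\overline{\KK(T_{kj})}$ as in Cases~3 and~4 of Theorem~\ref{rt} and then descending. That idea will not help here: those base-change arguments were used to prove \emph{rigidity} (nonexistence of LNDs), not to construct LNDs, and an LND over an extension field does not descend to one over $\KK$. The paper avoids the issue entirely. For $H_2$ all exponents are exactly~$2$, so relabeling within a block is a genuine symmetry. For $H_4$ one combines Lemma~\ref{nenul} (using the exponent-$1$ variable in block~$0$) to adjust the coordinates in blocks~$1$ and~$2$, and then Lemma~\ref{dve} (with blocks~$1,2$ as the two squares) to adjust block~$0$. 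For $H_5$ the key device---which you do mention but then muddle---is to cycle which \emph{pair of monomials} plays the role of the two squares in Lemma~\ref{dve}: taking $(0,1)$ as squares adjusts all $T_{2k}$; taking $(1,2)$ as squares adjusts all $T_{0i}$; taking $(0,2)$ as squares adjusts all $T_{1j}$. No variable-within-block permutation and no field extension is needed.

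The remaining substantive work, as you correctly anticipate, is the boundary analysis: showing that a regular point with some $T_{ij}=0$ can be moved by $\mathrm{SAut}(X)$ into the locus where the relevant slice lemma applies. The paper handles this case-by-case using the Jacobian criterion for regularity (a regular point cannot have too many vanishing coordinates in the ``wrong'' blocks) together with a single application of one of the explicit LNDs to make the offending coordinate nonzero.
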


To prove this theorem we need two lemmas.

\begin{lem}\label{nenul}
Let $X=\VV\left(T_0^{l_0}+T_1^{l_1}+T_{21}T_{22}^{l_{22}}\ldots T_{2n_2}^{l_{2n_2}}\right)$. Let us fix $n_2-1$ numbers $c_2,\ldots,c_{n_2}\in \KK$. Consider the subvariety 
$$
X(c_2,\ldots,c_{n_2})=\VV\left(T_{22}-c_2, \ldots, T_{2n_2}-c_{n_2}\right)\subset X. 
$$
If for all $k\in\{2,\ldots,n_0\}$ we have $c_k\neq 0$, then $\mathrm{SAut}(X)$ acts on $X(c_2,\ldots,c_{n_2})$ transitively. 
\end{lem}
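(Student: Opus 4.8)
The plan is to realize $X(c_2,\dots,c_{n_2})$ as a slice on which we can produce enough one-parameter unipotent subgroups of $\mathrm{SAut}(X)$ to move any point to any other. First I would substitute $T_{2j}=c_j$ for $j\geq 2$ into the defining equation: since all $c_k\neq 0$ for $k\in\{2,\dots,n_2\}$, the product $c_2^{l_{22}}\cdots c_{n_2}^{l_{2n_2}}$ is a nonzero scalar $\lambda$, so that $X(c_2,\dots,c_{n_2})$ is isomorphic to the hypersurface $\VV(T_0^{l_0}+T_1^{l_1}+\lambda T_{21})\subset\KK^{n_0+n_1+1}$. Rescaling $T_{21}$, this is just a graph $T_{21}=-\lambda^{-1}(T_0^{l_0}+T_1^{l_1})$, hence isomorphic to the affine space $\KK^{n_0+n_1}$ with coordinates $T_{01},\dots,T_{0n_0},T_{11},\dots,T_{1n_1}$. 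Of course $\mathrm{SAut}$ of affine space is infinitely transitive, but the real point is that these translations must be induced by automorphisms of the ambient $X$, not just of the slice.

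So the main work is to lift, for each coordinate function $T_{pq}$ with $p\in\{0,1\}$, the $\GA$-action that translates $T_{pq}$ on the slice to a $\GA$-action on all of $X$ that preserves the slice. For this I would use the LND $\delta$ exhibited in the proof of Theorem~\ref{rt}, case 1): taking the distinguished variable in the third monomial to be $T_{21}$ (which appears to first power there), the derivation
$$
\delta(T_{pq})=\frac{\partial T_2^{l_2}}{\partial T_{21}},\qquad \delta(T_{21})=-\frac{\partial T_1^{l_1}}{\partial T_{pq}},\qquad \delta(T_{st})=0\ \text{otherwise}
$$
(with the analogous formula using $T_0^{l_0}$ when $p=0$) is a nonzero LND of $\KK[X]$. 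Since $\dfrac{\partial T_2^{l_2}}{\partial T_{21}}=T_{22}^{l_{22}}\cdots T_{2n_2}^{l_{2n_2}}$ does not involve $T_{21}$ and $\delta$ kills every $T_{2j}$ with $j\geq 2$, the exponential $\exp(t\delta)$ fixes all the functions $T_{22},\dots,T_{2n_2}$, hence preserves $X(c_2,\dots,c_{n_2})$; moreover on the slice it acts (after the identifications above) as a translation in the $T_{pq}$ direction modified by the unit $c_2^{l_{22}}\cdots c_{n_2}^{l_{2n_2}}\neq 0$ and by the factor $\dfrac{\partial T_1^{l_1}}{\partial T_{pq}}$. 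The hard part will be organizing these partial‐derivative factors: $\dfrac{\partial T_1^{l_1}}{\partial T_{pq}}$ vanishes on the locus $T_{pq}=0$ when $l_{pq}\geq 2$, so a single such $\GA$-action does not move points off that locus, and one must combine the actions coming from different choices of $(p,q)$ (and the two symmetric families indexed by $p=0$ and $p=1$) to generate a group acting transitively.

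To finish, I would argue as follows. The functions $T_{01},\dots,T_{0n_0},T_{11},\dots,T_{1n_1}$ are coordinates on $X(c_2,\dots,c_{n_2})\cong\KK^{n_0+n_1}$, and for each index pair $(p,q)$ the $\GA$-action above restricts to a replica, up to a nowhere-vanishing rescaling, of the "triangular" $\GA$-action $T_{pq}\mapsto T_{pq}+t\,\prod_{(s,t')\neq(p,q),\,s\in\{0,1\},\,s\neq p}T_{st'}^{\cdots}$ familiar from flexibility proofs for $\VV(T_0^{l_0}+T_1^{l_1}+T_{21})$. These are precisely the partial derivatives appearing in the Koszul/Jacobian LND of a smooth-looking hypersurface, and the collection of all of them, for $p=0$ and $p=1$, spans the tangent space at every point of the regular locus of the slice; by Theorem~\ref{aaa} the subgroup they generate acts transitively (indeed infinitely transitively) on $(X(c_2,\dots,c_{n_2}))^{\mathrm{reg}}$. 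Since all these automorphisms extend to elements of $\mathrm{SAut}(X)$ preserving the slice, $\mathrm{SAut}(X)$ acts transitively on $X(c_2,\dots,c_{n_2})$, which is the claim. The one subtlety to handle carefully is the passage between "transitive on the slice" and the flexibility criterion, namely checking that the slice itself, being isomorphic to affine space, has its whole self as regular locus so no points are lost; this is immediate once the isomorphism with $\KK^{n_0+n_1}$ is established.
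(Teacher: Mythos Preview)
Your proposal uses exactly the same LNDs as the paper, but you have misread how they act on the slice, and this misreading leads you into an unnecessary detour through Theorem~\ref{aaa}.

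Concretely: for your derivation $\delta=\delta_{pq}$ one has
\[
\delta(T_{pq})=\frac{\partial T_2^{l_2}}{\partial T_{21}}=T_{22}^{l_{22}}\cdots T_{2n_2}^{l_{2n_2}},
\]
and $\delta$ kills every $T_{2j}$ with $j\geq 2$. Hence $\delta^2(T_{pq})=0$ and
\[
\exp(t\delta)(T_{pq})=T_{pq}+t\,T_{22}^{l_{22}}\cdots T_{2n_2}^{l_{2n_2}},
\]
which on the slice $X(c_2,\dots,c_{n_2})$ is the \emph{pure translation} $T_{pq}\mapsto T_{pq}+t\lambda$ with $\lambda=c_2^{l_{22}}\cdots c_{n_2}^{l_{2n_2}}\neq 0$. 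The factor $\dfrac{\partial T_1^{l_1}}{\partial T_{pq}}$ sits only in $\delta(T_{21})$; once you identify the slice with $\KK^{n_0+n_1}$ via the coordinates $T_{ij}$ ($i\in\{0,1\}$), the variable $T_{21}$ has been eliminated and that factor plays no role. In particular your ``hard part'' does not exist: the locus $\{T_{pq}=0\}$ is \emph{not} preserved by $\exp(t\delta_{pq})$, and your later description of the restricted action as a triangular $\GA$-action with coefficient $\prod_{s\neq p}T_{st'}^{\cdots}$ is also incorrect.

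With the correct computation the proof becomes the paper's: given $P,Q\in X(c_2,\dots,c_{n_2})$, set $s_{ij}=\lambda^{-1}\bigl(T_{ij}(Q)-T_{ij}(P)\bigr)$ for each $i\in\{0,1\}$, $1\le j\le n_i$; then the composite of the $\exp(s_{ij}\delta_{ij})$ sends $P$ to a point agreeing with $Q$ in all coordinates $T_{ij}$ with $i\in\{0,1\}$ and in $T_{22},\dots,T_{2n_2}$, and the defining equation forces $T_{21}$ to agree as well. No appeal to Theorem~\ref{aaa} is needed.
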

\begin{proof}
For every $i\in\{0,1\}$ and  $j\in\{1,2,\ldots, n_i\}$
we can define an LND $\gamma_{ij}$ of $\KK[X]$ by
$$
\gamma_{ij}(T_{21})=-\frac{\partial T_i}{\partial T_{ij}},\qquad
\gamma_{ij}(T_{ij})=\frac{\partial T_2}{\partial T_{21}},\qquad
\gamma_{ij}(T_{pq})=0\ \text{for all other pairs}\ (p,q).$$

Denote $\tau_{ij}(s)=\exp(s\gamma_{ij})$. Then
\begin{equation*}
\tau_{ij}(s)(T_{ij})=T_{ij}+sT_{22}^{l_{22}}\ldots T_{2n_2}^{l_{2n_2}},
\end{equation*}
$\tau_{ij}(s)(T_{pq})=T_{pq}$ for all pairs $(p,q)\neq(2,1)$, $(p,q)\neq(i,j)$.

Let $P, Q\in X(c_2,\ldots,c_{n_2})$.
Since 
$$T_{22}^{l_{22}}\ldots T_{2n_2}^{l_{2n_2}}(P)=T_{22}^{l_{22}}\ldots T_{2n_2}^{l_{2n_2}}(Q)\neq 0,$$ we can take 
$$
s_{ij}=\frac{T_{ij}(Q)-T_{ij}(P)}{T_{22}^{l_{22}}\ldots T_{2n_2}^{l_{2n_2}}(P)}.
$$
Put $R=\tau_{ij}(s_{ij})(P)$. Then $T_{ij}(R)=T_{ij}(Q)$.

Starting from $P$ we can change by $\tau_{ij}(s_{ij})$ all $T_{ij}$. As the result we get a point $S\in X(c_2,\ldots,c_{n_2})$ such that $T_{ij}(S)=T_{ij}(Q)$ for all 
$i=0,1$; $j=1,2,\ldots, n_i$. Then 
$$T_{21}(S)=-\frac{T_0^{l_0}(S)+T_1^{l_1}(S)}{T_{22}^{l_{22}}\ldots T_{2n_2}^{l_{2n_2}}(S)}=
-\frac{T_0^{l_0}(Q)+T_1^{l_1}(Q)}{T_{22}^{l_{22}}\ldots T_{2n_2}^{l_{2n_2}}(Q)}=T_{21}(Q).$$
Therefore, $S=Q$. That is $P$ and $Q$ are in one $\mathrm{SAut}(X)$-orbit.

\end{proof}

\begin{lem}\label{dve}
Consider 
$$X=\VV\left(T_{01}^2T_{02}^{2m_{02}}\ldots T_{0n_0}^{2m_{0n_0}}-T_{11}^2T_{12}^{2m_{12}}\ldots T_{1n_1}^{2m_{1n_1}}-T_2^{l_2}\right). $$ 
Let us fix $n_0+n_1-2$ numbers: $a_2,\ldots, a_{n_0}, b_2, \ldots, b_{n_1}$. Consider the subvariety 
$$
X(a_2,\ldots, a_{n_0}, b_2, \ldots, b_{n_1})=\VV\left(T_{02}-a_2, \ldots, T_{0n_0}-a_{n_0},T_{12}-b_2, \ldots, T_{1n_1}-b_{n_1}\right)\subset X. 
$$
If all $a_u\neq 0$ and all $b_v\neq 0$, then $\mathrm{SAut}(X)$ acts on $X(a_2,\ldots, a_{n_1}, b_2, \ldots, b_{n_2})\cap X^{reg}$ transitively. 
\end{lem}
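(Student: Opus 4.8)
The plan is to follow the strategy of Lemma~\ref{nenul}: construct enough locally nilpotent derivations of $\KK[X]$ that annihilate all the coordinates cut out by the slice, so that the corresponding $\GA$-subgroups preserve it, and then show that these subgroups move an arbitrary point of the slice lying in $X^{\mathrm{reg}}$ to one fixed base point (transitivity then follows because $\mathrm{SAut}(X)$ is a group). The explicit derivations will be the ``square root'' derivation $\delta$ from the proof of Theorem~\ref{rt} together with its $n_2$ analogues obtained by replacing $\partial T_2^{l_2}/\partial T_{21}$ by $\partial T_2^{l_2}/\partial T_{2j}$.

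Concretely, put $\alpha=\prod_{u=2}^{n_0}a_u^{m_{0u}}$ and $\beta=\prod_{v=2}^{n_1}b_v^{m_{1v}}$, which lie in $\KK^\times$ by hypothesis; on the slice the monomials $\sqrt{T_0^{l_0}}=T_{01}T_{02}^{m_{02}}\cdots$ and $\sqrt{T_1^{l_1}}=T_{11}T_{12}^{m_{12}}\cdots$ restrict to $\alpha T_{01}$ and $\beta T_{11}$, and the equation of $X$ restricts to $\alpha^2T_{01}^2-\beta^2T_{11}^2=T_2^{l_2}$. For each $j$ I would define $\delta_j,\bar\delta_j$ on $\KK[X]$ by the formulas for $\delta$ in Theorem~\ref{rt}, with $\partial T_2^{l_2}/\partial T_{2j}$ in place of $\partial T_2^{l_2}/\partial T_{21}$, taking $\delta_j(T_{2j})\propto\sqrt{T_0^{l_0}}-\sqrt{T_1^{l_1}}$ with a $+$ sign in $\delta_j(T_{11})$, and $\bar\delta_j(T_{2j})\propto\sqrt{T_0^{l_0}}+\sqrt{T_1^{l_1}}$ with a $-$ sign in $\bar\delta_j(T_{11})$, all other generators going to $0$. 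Exactly as in Theorem~\ref{rt} one verifies that each annihilates the defining polynomial (hence descends to $\KK[X]$), that $\delta_j$ kills $\sqrt{T_0^{l_0}}-\sqrt{T_1^{l_1}}$ and $\bar\delta_j$ kills $\sqrt{T_0^{l_0}}+\sqrt{T_1^{l_1}}$, and hence that $\delta_j^{l_{2j}+1}(T_{01})=\delta_j^{l_{2j}+1}(T_{11})=\delta_j^2(T_{2j})=0$, and similarly for $\bar\delta_j$, so all of them are LND. Since every $\delta_j$ and $\bar\delta_j$ kills all $T_{0k},T_{1k}$ with $k\ge2$, the groups $\exp(s\delta_j),\exp(s\bar\delta_j)$ preserve the slice.

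Writing $U=\alpha T_{01}+\beta T_{11}$ and $V=\alpha T_{01}-\beta T_{11}$, so that $UV=T_2^{l_2}$ on the slice, one gets: $\exp(s\delta_j)$ fixes $V$ and every $T_{2k}$ with $k\ne j$ and sends $T_{2j}\mapsto T_{2j}+2\alpha\beta V s$ (the value of $U$ adjusting so that $UV=T_2^{l_2}$ stays valid); symmetrically for $\exp(s\bar\delta_j)$ with $U$ and $V$ interchanged. A short Jacobian computation (using $\alpha,\beta\ne0$ and all $a_u,b_v\ne0$) shows that a slice point $p$ belongs to $X^{\mathrm{reg}}$ exactly when either $\bigl(U(p),V(p)\bigr)\ne(0,0)$, or $U(p)=V(p)=0$ with exactly one $T_{2j}(p)=0$ and $l_{2j}=1$ for that $j$. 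If $V(p)\ne0$ (the case $U(p)\ne0$ is symmetric), apply the $\exp(s\delta_j)$ in turn to bring every $T_{2j}$ to $1$ — legitimate because the coefficient $2\alpha\beta V$ is nonzero, so $T_{2j}$ ranges over $\KK$ — which forces $U=1/V(p)$; then $\exp(s\bar\delta_1)$ moves $T_{21}$ (with $U$ fixed) to a value with $T_{21}^{l_{21}}=1/V(p)$, bringing $V$ to $1$; then $\exp(s\delta_1)$ moves $T_{21}$ back to $1$, reaching the base point $U=V=1$, all $T_{2j}=1$. In the exceptional case, say $T_{21}(p)=0$, $l_{21}=1$, $T_{2k}(p)\ne0$ for $k\ge2$ and $U(p)=V(p)=0$: applying $\exp(s\bar\delta_1)$ first keeps $U=0$ and all $T_{2k}$ fixed while adding to $V$ the nonzero constant multiple $2\alpha\beta s\prod_{k\ge2}T_{2k}(p)^{l_{2k}}$, so after one such step $V\ne0$ and we are reduced to the previous case.

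The main difficulty is precisely the interplay with the loci $\{U=0\}$ and $\{V=0\}$: the translation produced by $\delta_j$ collapses on $\{V=0\}$ and the one produced by $\bar\delta_j$ collapses on $\{U=0\}$, so the phases must be ordered so as never to be stranded, and this is exactly where the hypothesis $p\in X^{\mathrm{reg}}$ enters — it excludes the singular points with $U=V=0$ at which two of the $T_{2j}$ vanish, or at which a single vanishing $T_{2j}$ has exponent $\ge2$. Beyond that, the work consists of writing out the formulas for $\delta_j,\bar\delta_j$, checking local nilpotency as in Theorem~\ref{rt}, and verifying their restrictions to the slice, all of which is routine.
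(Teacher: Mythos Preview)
Your proposal is correct and follows essentially the same route as the paper: you build the same family of ``square-root'' LNDs $\delta_j,\bar\delta_j$ (the paper's $\delta_{j-},\delta_{j+}$), check they preserve the slice, and shuttle an arbitrary regular slice point to a fixed base point via the invariants $U,V$ with $UV=T_2^{l_2}$. The one small deviation is in the degenerate case $U=V=0$: the paper introduces an auxiliary LND $\gamma$ of the ``$l_{2k}=1$'' type to make $T_{11}\neq0$, whereas you notice that $\bar\delta_1$ itself already translates $V$ (since $l_{21}=1$ forces $\partial T_2^{l_2}/\partial T_{21}\neq0$ at $p$ while $U\equiv0$ freezes $T_{21}$), which is a mild simplification.
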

\begin{proof}
For each $1\leq i\leq n_2$ we have two LNDs $\delta_{i+}$ and $\delta_{i-}$ of $\KK[X]$ given by

$$
\left\{
\begin{array}{lcl}
\delta_{i+}(T_{01})=\frac{\partial T_2^{l_2}}{\partial T_{2i}}\frac{\sqrt{T_1^{l_1}}}{T_{11}};\\
\delta_{i+}(T_{11})=-\frac{\partial T_2^{l_2}}{\partial T_{2i}}\frac{\sqrt{T_0^{l_0}}}{T_{01}};\\
\delta_{i+}(T_{2i})=2\frac{\sqrt{T_0^{l_0}}\sqrt{T_1^{l_1}}}{T_{01}T_{11}}\left(\sqrt{T_0^{l_0}}+\sqrt{T_1^{l_1}}\right);\\
\delta_{i+}(T_{jk})=0 \text{ for all } (j,k)\notin\{ (0,1), (1,1), (2,i)\}.
\end{array}
\right.
$$
$$
\left\{
\begin{array}{lcl}
\delta_{i-}(T_{01})=\frac{\partial T_2^{l_2}}{\partial T_{2i}}\frac{\sqrt{T_1^{l_1}}}{T_{11}};\\
\delta_{i-}(T_{11})=\frac{\partial T_2^{l_2}}{\partial T_{2i}}\frac{\sqrt{T_0^{l_0}}}{T_{01}};\\
\delta_{i-}(T_{2i})=2\frac{\sqrt{T_0^{l_0}}\sqrt{T_1^{l_1}}}{T_{01}T_{11}}\left(\sqrt{T_0^{l_0}}-\sqrt{T_1^{l_1}}\right);\\
\delta_{i-}(T_{jk})=0 \text{ for all } (j,k)\notin\{ (0,1), (1,1), (2,i)\}.
\end{array}
\right.
$$
As in the proof of Theorem~\ref{rt} one can check that $\delta_{i+}$ and  $\delta_{i-}$ are LNDs of $\KK[X]$.

Denote 
\begin{equation*}
\alpha=\sqrt{T_0^{l_0}}-\sqrt{T_1^{l_1}},
\end{equation*} 
\begin{equation*}
\beta=\sqrt{T_0^{l_0}}+\sqrt{T_1^{l_1}}.
\end{equation*}
Then 
\begin{equation*}
\delta_{i+}(\alpha)=\delta_{i-}(\beta)=2\frac{\partial T_2^{l_2}}{\partial T_{2i}}\frac{\sqrt{T_0^{l_0}}\sqrt{T_1^{l_1}}}{T_{01}T_{11}},
\qquad
\delta_{i+}(\beta)=\delta_{i-}(\alpha)=0.
\end{equation*}

Denote $\varphi_{i+}(s)=\exp(s\delta_{i+})$ and $\varphi_{i-}(s)=\exp(s\delta_{i-})$.
We have 
$$
\varphi_{i\pm}(s)(T_{2i})=T_{2i}+2s\frac{\sqrt{T_0^{l_0}}\sqrt{T_1^{l_1}}}{T_{01}T_{11}}\left(\sqrt{T_0^{l_0}}\pm\sqrt{T_1^{l_1}}\right).
$$

Let us fix a point $Q$ in $X(a_2,\ldots, a_{n_0}, b_2, \ldots, b_{n_1})\cap X^{reg}$ such, that for all $i$ and~$j$ we have $T_{ij}(Q)\neq 0$. Let $P$ be an arbitrary point in $X(a_2,\ldots, a_{n_0}, b_2, \ldots, b_{n_1})\cap X^{reg}$. If we can map $P$ to $Q$ by an element of $\mathrm{SAut}(X)$, then $X(a_2,\ldots, a_{n_0}, b_2, \ldots, b_{n_1})\cap X^{reg}$ is contained in one $\mathrm{SAut}(X)$-orbit. Let us construct a sequence of automorphisms in $\mathrm{SAut}(X)$, composition of which maps $P$ to $Q$.

First of all we would like to map $P$ by $\mathrm{SAut}(X)$ to such a point $R$, that $T_{2i}(R)=T_{2i}(Q)$ for all $i$. To do this we obtain points 
$$P_0=P, P_1,\ldots,P_{n_0}=R\in\left(X(a_2,\ldots, a_{n_0}, b_2, \ldots, b_{n_1})\cap X^{reg}\right),$$
where $T_{21}(P_i)=T_{21}(Q),\ldots, T_{2i}(P_i)=T_{2i}(Q)$ and $P_i=\beta_i(P_{i-1})$ for some $\beta_i\in\mathrm{SAut}(X)$.
We start from $i=1$ and $P_0=P$. When we obtain a point $P_i$, we increase $i$. The point $R$ coincides with $P_{n_0}$.

Since all $a_u\neq 0$ and all $b_v\neq 0$, if $T_{01}(P_{i-1})\neq 0$ or $T_{11}(P_{i-1})\neq 0$, then at least one $\delta_{i+}(T_{2i})(P_{i-1})$ or $\delta_{i-}(T_{2i})(P_{i-1})$ is not equal to zero. Therefore, we can change $T_{2i}(P_{i-1})$ by $\varphi_{i+}$ or by $\varphi_{i-}$. Hence, there is $s$ such that 
$T_{2i}(\varphi_{i\pm}(s)(P_{i-1}))=T_{2i}(Q)$. We can take $P_i=\varphi_{i\pm}(s)(P_{i-1})$. 

If $T_{01}(P_{i-1})=T_{11}(P_{i-1})=0$, then there is an index $k$ such that $T_{2k}(P_{i-1})=0$. For every $j<i$ we have $T_{2j}(P_{i-1})=T_{2j}(Q)\neq 0$. Therefore, $k\geq i$. Since $P_{i-1}$ is a regular point, we have $l_{2k}=1$ and $k$ is the unique index such that $T_{2k}(P_{i-1})=0$. Then we can consider the LND $\gamma$ given by
$$
\gamma(T_{2k})=\frac{\partial T_1^{l_1}}{\partial T_{11}}\qquad \gamma(T_{11})=-\frac{\partial T_2^{l_2}}{\partial T_{2k}}, \qquad
\gamma(T_{pq})=0\ \text{for all other pairs}\ (p,q).
$$
Denote $\tau(s)=\exp(s\gamma)$. Since for all $r\neq k$ it is true $T_{2r}(P_{i-1})\neq 0$, we obtain $\gamma(T_{11})(P_{i-1})\neq 0$. Therefore, there is $s\in\KK$ such that $T_{11}\left(\tau(s)(P_{i-1})\right)\neq 0$. Let us denote $P_{i-1}'=\tau(s)(P_{i-1})$. Since $k\geq i$, we have $T_{2j}(P_{i-1}')=T_{2j}(P_{i-1})=T_{2j}(Q)$ for all $j<i$.
Now we have $\delta_{i+}(T_{2i})(P_{i-1}')\neq 0$. Hence, we can take $P_i=\varphi_{i+}(t)(P'_{i-1}))$ for suitable $t$.

We have obtained a point $R=\psi(P)$ for some $\psi\in\mathrm{SAut}(X)$ such that for all $i$ we have $T_{2i}(R)=T_{2i}(Q)$, for all $j\geq 2$ we have $T_{0j}(R)=T_{0j}(Q)=a_j$ and $T_{1j}(R)=T_{1j}(Q)=b_j$.

Since for all $i$ and~$j$ it is true $T_{ij}(Q)\neq 0$, we have $\delta_{1+}(\alpha)(R)\neq 0$. Hence, there is $\hat{s}$ such that $\alpha(\varphi_{1+}(\hat{s})(R))=\alpha(Q)$. Let us denote $M=\varphi_{1+}(\hat{s})(R)$. Volumes of the following functions at $M$ and $Q$  coincide:
$\alpha$, $T_{0j}$, $T_{1j}$, and $T_{2j}$ for all $j\geq 2$.

We have $\alpha\beta=T_2^{l_2}$. Therefore, $\alpha(Q)\neq 0$. Since volumes of all $T_{0j}, T_{1j}$ for $j\geq 2$, and $\alpha$  at $M$ are not equal to zero, we obtain $\delta_{1-}(T_{21})(M)\neq0$. Hence, we can find $t\in\KK$ such that $T_{21}(\varphi_{1-}(t)(M))=T_{21}(Q)$. From the other hand $\varphi_{1-}(t)$ does not change $T_{ij}$, $j\geq 2$ and $\alpha$. So, volumes of the following functions coincide at $N=\varphi_{1-}(t)(M)$ and $Q$:
$T_{0j}$ and $T_{1j}$ for $j\geq 2$, $T_{2i}$ for all $i$, and $\alpha.$ 
Since $\alpha\beta=T_2^{l_2}$ and $\alpha(Q)\neq 0$, we obtain $\beta (N)=\beta(Q)$. Therefore, all variables coincide at $N$ and $Q$. That is $N=Q$.
\end{proof}

\begin{proof}[Proof of Theorem~\ref{fltr}] In all the items we prove that $X^{reg}$ is one $\mathrm{SAut}(X)$-orbit, where $X$ is the considered trinomial hypersurface. 

{\bf Type $H_1$.} Suppose $P,Q \in X$ such that for all $1\leq i\leq n_2$ we have $T_{2i}(P)\neq 0$ and $T_{2i}(Q)\neq 0$. Then $P$ and $Q$ are in one $\mathrm{SAut}(X)$-orbit. 
Indeed, by Lemma~\ref{nenul} there is $\psi_1\in\mathrm{SAut}(X)$ such that 
$$T_{21}(\psi_1( P))=T_{21}(Q),\qquad \forall j\neq 1\colon T_{2j}(\psi_1( P))=T_{2j}(P).$$
Then again by Lemma~\ref{nenul} there is $\psi_2\in\mathrm{SAut}(X)$ such that 
$$T_{22}((\psi_2\circ\psi_1)(P))=T_{22}(Q),\qquad \forall j\neq 2\colon T_{2j}((\psi_2\circ\psi_1)(P))=T_{2j}(\psi_1( P)).$$ 
And so on. Terminally we obtain, that $(\psi_{n_2}\circ\ldots\circ\psi_1)(P)=Q$.

Now let us take $R\in X^{reg}$. Our goal is to find $\psi\in\mathrm{SAut}(X)$ such that for all $1\leq i\leq n_2$ $T_{2i}(\psi(R))\neq 0$. If there are $i\in\{0,1\}$ and $j\in\{1,2\ldots,n_i\}$ such that 
$$\frac{T_i^{l_i}}{T_{ij}}(R)\neq 0,$$ 
then $\gamma_{ij}(T_{21})(R)\neq 0$. Therefore, we can find $s\in\KK$ such that $T_{21}(\exp(s\gamma_{ij})(R))\neq 0$. We can choose  such $s$ that 
$$\frac{T_i^{l_i}}{T_{ij}}(\exp(s\gamma_{ij})(R))\neq 0.$$ 
But $T_{2k}(\exp(s\gamma_{ij})(R))=T_{2k}(R)$, $k>1$. Analogically we can apply automorphisms from $\mathrm{SAut}(X)$ to get a point $S$ with $T_{2k}(S)\neq 0$ for all~$k$.  

 If there are no $i\in\{0,1\}$ and $j\in\{1,2\ldots,n_i\}$ such that 
 $$\frac{T_i^{l_i}}{T_{ij}}(R)\neq 0.$$ 
 Then for all $i\in\{0,1\}$ and $j\in\{1,2\ldots,n_i\}$ we have
 $$\frac{\partial f}{\partial T_{ij}}=0,$$
 where $f=T_0^{l_0}+T_1^{l_1}+T_{21}T_{22}\ldots T_{2n_2}$. Since $R\in X^{reg}$, there exists $1\leq a\leq n_2$ such that for all $b\neq a$ we have $T_{2b}(R)\neq 0$. 
 Applying Lemma~\ref{nenul} we obtain $\psi\in\mathrm{SAut}(X)$ such that $T_{2i}(\psi(R))\neq 0$ for all $i$.

{\bf Type $H_2$.} Analogically to $H_1$, using Lemma~\ref{dve} we can conclude that if $P,Q\in X^{reg}$ and 
$$T_{0i}(P)\neq 0, T_{0i}(Q)\neq 0, T_{1j}(P)\neq 0, T_{1j}(Q)\neq 0$$ 
for all $1\leq i\leq n_0$, $1\leq j\leq n_1$, then $P$ and $Q$ are in one $\mathrm{SAut}(X)$-orbit. 
Indeed, we can find $\psi_1\in \mathrm{SAut}(X)$ such that 
\begin{equation*}T_{01}(\psi_1(P))=T_{01}(Q), T_{11}(\psi_1(P))=T_{11}(Q),
\end{equation*}
\begin{equation*} 
T_{0i}(\psi_1(P))=T_{0i}(P), T_{1j}(\psi_1(P))=T_{1j}(P) \text{ for all } i,j>1.
\end{equation*}

In this way we can find $\psi\in\mathrm{SAut}(X)$ such that 
\begin{equation*}
\forall i,j\colon T_{0i}(\psi)(P)=T_{0i}(Q), T_{1j}(\psi)(P)=T_{1j}(Q),
\end{equation*}

Therefore, by Lemma~\ref{dve}, $P$ and $Q$ are in one $\mathrm{SAut}(X)$-orbit.

Let $R\in X^{reg}$. Suppose $T_{ij}(R)=0$, for some $i\in\{0,1\}$. We can assume that $T_{01}(R)=0$. Suppose all $l_{2k}>1$. Since $R\in X^{reg}$ we obtain that for all $j$ and $k$ we have $T_{1j}(R)\neq 0$ and $T_{2k}(R)\neq 0$. Therefore, there is $s$ such that $T_{01}(\varphi_{1+}(s)(R))\neq 0$. We can take such $s$ that $T_{1j}(\varphi_{1+}(s)(R))\neq 0$ for all $j$. So, there exists $\psi\in\mathrm{SAut}(X)$ such that for all $i$ and $j$ we have $T_{0i}(\psi(R))\neq 0, T_{1j}(\psi(R))\neq 0$.

If there is $1\leq k\leq n_2$ such that $l_{2k}=1$, then $X$ is of Type $H_4$. We consider this case later.

{\bf Type $H_3$.} Let $P,Q\in X^{reg}$. We can fix $Q$ such that $T_{12}\ldots T_{1n_1}T_{22}\ldots T_{2n_2}(Q)\neq 0$. If $T_{12}\ldots T_{1n_1}(P)\neq 0$, then using Lemma~\ref{nenul} we can obtain $P'=\psi(P)$, $\psi\in\mathrm{SAut}(X)$, $T_{2j}(P')=T_{2j}(Q)$ for all $1\leq j\leq n_2$. By Lemma~\ref{nenul} $P'$ and $Q$ are in one $\mathrm{SAut}(X)$-orbit. Therefore $P$ and $Q$ are in one $\mathrm{SAut}(X)$-orbit. 

If $T_{12}\ldots T_{1n_1}(P)=0$ and $T_{22}\ldots T_{2n_2}(P)\neq 0$ we can swap $T_1^{l_1}$ and $T_2^{l_2}$.

If $T_{12}\ldots T_{1n_1}(P)= 0$ and $T_{22}\ldots T_{2n_2}(P)=0$, then $T_0^{l_0}(P)=0$. Since $P$ is regular, there exist
$a\in\{0,1,2\}$ and $1\leq j\leq n_a$ such that $l_{aj}=1$ and 
$$T_{a1}\ldots T_{aj-1}T_{aj+1}\ldots T_{an_a}(P)\neq 0.$$ 
Then we can swap monomials $a$ and $1$ and swap variables in such a way that $T_{12}\ldots T_{1n_1}(P)\neq0$. So, we obtain that $X^{\mathrm{reg}}$ is one orbit.

{\bf Type $H_4$.} Let $P,Q\in X^{reg}$, $T_{ij}(Q)\neq 0$ for all $i,j$. If $T_{02}\ldots T_{0n_0}(P)\neq 0$, using Lemma~\ref{nenul} we can find $\psi\in\mathrm{SAut}(X)$ such that $T_{ij}(\psi(P))=T_{ij}(Q)$ for $i=1,2$. Then by Lemma~\ref{dve} $\psi(P)$ and $Q$ are in one $\mathrm{SAut}(X)$-orbit.

If among $T_{01},\ldots,T_{0n_0}$ there is only one variable $T_{0j}$ such that $T_{0j}(P)=0$ and $l_{0j}=1$, then we can swap $T_{01}$ and $T_{0j}$. 

If among $T_{01},\ldots,T_{0n_0}$ there are at least two variables vanishing at $P$ or there is only one variable $T_{0j}$ vanishing on $P$ but $l_{0j}>1$, then since $P$ is regular, we have $T_{11}\ldots T_{1n_1}(P)\neq 0$ and $T_{21}\ldots T_{2n_2}(P)\neq 0$. Then applying Lemma~\ref{dve} 
we can obtain $\tau\in\mathrm{SAut}(X)$ such that $T_{02}\ldots T_{0n_0}(\tau(P))\neq0$.

{\bf Type $H_5$.} Let $P, Q\in X^{reg}$, $T_{ij}(Q)\neq 0$ for all $i,j$. Since $P$ is regular, at list for two $i$ we have $T_{i1}\ldots T_{in_i}(P)\neq 0$. 
We can assume that $T_{01}\ldots T_{0n_0}(P)\neq 0$ and $T_{11}\ldots T_{1n_1}(P)\neq 0$. 
By Lemma~\ref{dve} there exists $\psi_1\in\mathrm{SAut}(X)$ such that for all $2\leq i\leq n_0$, $2\leq j\leq n_1$ and $1\leq k\leq n_2$ we have
$$T_{0i}(\psi_1(P))=T_{0i}(P),\qquad T_{1j}(\psi_1(P))=T_{1j}(P),\qquad T_{2k}(\psi_1(P))=T_{2k}(Q).$$
Applying once more Lemma~\ref{dve} we obtain $\psi_2\in\mathrm{SAut}(X)$ such that for all $1\leq i\leq n_0$, $2\leq j\leq n_1$ and $2\leq k\leq n_2$ we have
\begin{equation*}
T_{0i}(\psi_2\circ\psi_1(P))=T_{0i}(Q),\qquad T_{1j}((\psi_2\circ\psi_1(P))=T_{1j}(P),
\end{equation*} 
\begin{equation*}
T_{2k}((\psi_2\circ\psi_1(P))=T_{2k}(\psi_1(P))=T_{2k}(Q).
\end{equation*} 
Again by Lemma~\ref{dve} $\psi_2\circ\psi_1(P)$ and $Q$ are in one $\mathrm{SAut}(X)$-orbit.

The proof of Theorem~\ref{fltr} is completed.

\end{proof}

\begin{re}
Theorem~\ref{fltr} does not give a criterium for a trinomial hypersurface to be flexible, but only a sufficient condition.
\end{re}
\begin{re}
Flexibility of varieties of Type $H_1$ when $n_2=2$ follows from results of \cite{AKZ}.
\end{re}

\begin{ex}
In \cite[Theorem~9.2]{FMJ} it is proved that $\mathrm{ML}\left(\VV(t^2x^2+y^2+z^n)\right)=\KK$. This variety is of Type $H_2$. Hence, it is flexible by Theorem~\ref{fltr}.
\end{ex}

\section{Intermediate trinomial hypersurfaces}

Theorem~\ref{rt} gives a criterium for a trinomial hypersurface to be rigid. Theorem~\ref{fltr} gives a sufficient condition for a trinomial hypersurface to be flexible.
Let us consider varieties, which are not rigid and are not isomorphic to any variety from Theorem~\ref{fltr}. There are two types of them

$
\text{\bf Type A.}\qquad \VV\left(T_0^{l_0}+T_1^{l_1}+T_{21}\ldots T_{2k}T_{2k+1}^{l_{2k+1}}\ldots T_{2n_2}^{l_{2n_2}}\right),
$
where $k\geq 1$, $n_2-k\geq 1$, all $l_{ij}\geq 2$, and 
$$\{l_{01},\ldots,l_{0n_0}\}\neq \{2, 2a_1,\ldots, 2a_{n_0-1}\}.$$

$
\text{\bf Type B.}\qquad
\VV\left(T_{01}^2\ldots T_{0k_0}^2T_{0k_0+1}^{2m_{0k_0+1}}\ldots T_{0n_0}^{2m_{0n_0}}+T_{11}^2\ldots T_{1k_1}^2 T_{1k_1+1}^{2m_{1k_1+1}}\ldots T_{1n_1}^{2m_{1n_1}}+T_2^{l_2}\right),
$
where $k_0,k_1\geq 1$, $n_0-k_0+n_1-k_1\geq 1$, all $m_{ij}\geq 2$, all $l_{2i}\geq 2$, and 
$$\{l_{21},\ldots,l_{2n_2}\}\neq \{2, 2a_1,\ldots, 2a_{n_2-1}\}. $$

\begin{de}
A variety, which is not rigid and is not flexible, we call {\it intermediate}.
\end{de}

Let us formulate a question, which remains open.
\begin{quest}\label{qu}
Does Theorem~\ref{fltr} give a criterium for a trinomial hypersurface to be flexible? That is, is it true that trinomial hypersurfaces of Types A and B are intermediate?
\end{quest}

The next two propositions give a partial answer to Question~\ref{qu} when the trinomial hypersurface is of Type A and $k=1$ and when it is of Type B and $k_0=1$. In these cases the answer is "yes". This implies that there exists an intermediate trinomial hypersurface. 

\begin{prop}\label{m}
Let 
$$X=\VV\left(T_0^{l_0}+T_1^{l_1}+T_{21}T_{22}^{l_{22}}\ldots T_{2n_2}^{l_{2n_2}}\right), $$ 
$n_2\geq 2$, all $l_{ij}\geq 2$ and $\{l_{01},\ldots,l_{0n_0}\}\neq \{2, 2a_1,\ldots, 2a_{n_0-1}\}.$ Then 

1) $\mathrm{ML}(X)=\KK[T_{22}, \ldots, T_{2n_2}]$,

2) generic $\mathrm{SAut}(X)$-orbits on $X$ can be separated by functions from $\mathrm{ML}(X)$. 

\end{prop}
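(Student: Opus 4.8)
\textbf{Proof proposal for Proposition~\ref{m}.}
The plan is to combine the suspension machinery of Section~\ref{tre} with the explicit $\GA$-actions of Lemma~\ref{nenul}. First I would observe that $X$ is an $n_2$-suspension over the affine space $\KK^{n_0+n_1}$, namely
$$X=\mathrm{Susp}\left(\KK^{n_0+n_1},-(T_0^{l_0}+T_1^{l_1}),1,l_{22},\ldots,l_{2n_2}\right),$$
with first weight equal to $1$. The idea for part~1) is to show $T_{2j}\in\mathrm{ML}(X)$ for each $j\geq 2$ via Lemma~\ref{tool}: taking $c=l_{2j}$ there, one needs that the suspension with first weight replaced by $l_{2j}$ (equivalently, the trinomial hypersurface obtained from $X$ by changing the exponent of $T_{21}$ from $1$ to $l_{2j}\geq 2$) is rigid. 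That rigidity should follow from Theorem~\ref{rt}: the resulting hypersurface has all exponents $\geq 2$, so it is nonrigid only if it has the shape~(\ref{fff}), i.e.\ two of its monomials consist entirely of even exponents with a $2$ present; but the monomial $T_0^{l_0}$ has exponent set $\{l_{01},\ldots,l_{0n_0}\}\neq\{2,2a_1,\ldots,2a_{n_0-1}\}$ and the third monomial now reads $T_{21}^{l_{2j}}\cdots$, and one checks the hypotheses forbid condition~2) of Theorem~\ref{rt}. Hence each $T_{2j}$, $j\geq 2$, lies in every kernel, so $\KK[T_{22},\ldots,T_{2n_2}]\subseteq\mathrm{ML}(X)$.

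For the reverse inclusion I would use the $\GA$-actions $\gamma_{ij}$ ($i\in\{0,1\}$, $1\leq j\leq n_i$) and $\gamma_{21}$-type derivations exhibited in Lemma~\ref{nenul} (and the analogous one moving $T_{21}$): these LNDs have $T_{11},T_{01},\ldots$ and $T_{21}$ in their images in a way that kills any candidate invariant depending on those variables. Concretely, if $h\in\mathrm{ML}(X)$ then $\gamma_{ij}(h)=0$ for all such $(i,j)$; writing $h$ as a polynomial and using that $\gamma_{ij}(T_{ij})=\partial T_2/\partial T_{21}=T_{22}^{l_{22}}\cdots T_{2n_2}^{l_{2n_2}}$ is a nonzerodivisor, a standard degree/leading-term argument on the kernel of the $\GA$-action forces $h$ to be independent of $T_{ij}$; doing this for all $i\in\{0,1\}$, $j$ and then for $T_{21}$ leaves $h\in\KK[T_{22},\ldots,T_{2n_2}]$. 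This proves 1).

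For part~2), I would note that the common zero locus of the $T_{2j}$-values, i.e.\ the fibration $X\to\KK^{n_2-1}=\spec\mathrm{ML}(X)$ sending a point to $(T_{22},\ldots,T_{2n_2})$, has the property that over the open set where all these coordinates are nonzero the fibre is exactly the subvariety $X(c_2,\ldots,c_{n_2})$ of Lemma~\ref{nenul} (up to the condition $c_k\neq0$), and that lemma says $\mathrm{SAut}(X)$ acts transitively on it. Therefore the generic $\mathrm{SAut}(X)$-orbit is a whole such fibre, and two points lying in different fibres are distinguished precisely by the values of $T_{22},\ldots,T_{2n_2}\in\mathrm{ML}(X)$; hence generic orbits are separated by $\mathrm{ML}(X)$.

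\textbf{Main obstacle.} The delicate point is the reverse inclusion in part~1): one must rule out \emph{every} LND having a nonzero $T_{2j}$-component, not merely the $T$-homogeneous or finely homogeneous ones, since in the non-factorial setting an LND need not be homogeneous for the finest grading. I expect to handle this by applying Lemma~\ref{flza} to reduce to a $T$-homogeneous LND $\partial$ (where $T$ is the complexity-one torus, realized as the stabilizer torus $\TT$ of the suspension monomial), then invoking Lemma~\ref{edin} to see $\partial$ kills all but one $y_j=T_{2j}$, and finally using Lemma~\ref{tool} together with the rigidity input above to kill that last one as well; the bookkeeping of which grading-coarsening $\mathfrak{h}_{1j}$ to use, and checking the relevant auxiliary hypersurface is genuinely rigid under the stated exclusion of the pattern $\{2,2a_1,\ldots\}$, is where the real work lies.
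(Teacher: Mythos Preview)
Your overall strategy matches the paper's: apply Lemma~\ref{tool} with a rigid auxiliary suspension $Z$ to obtain $\KK[T_{22},\ldots,T_{2n_2}]\subseteq\mathrm{ML}(X)$, and intersect the kernels of the explicit $\gamma_{ij}$ from Lemma~\ref{nenul} for the opposite inclusion; part~2) then follows from Lemma~\ref{nenul}. There is, however, a genuine gap in your choice of $c$. Consider $X=\VV(T_{01}^3+T_{11}^2+T_{21}T_{22}^2)$: it satisfies all hypotheses of the proposition (in particular $\{l_{01}\}=\{3\}$ is not of the excluded form), yet with your $c=l_{22}=2$ the auxiliary variety is $Z=\VV(T_{01}^3+T_{11}^2+T_{21}^2T_{22}^2)$, where monomials~$1$ and~$2$ both consist entirely of even exponents containing a~$2$; condition~2) of Theorem~\ref{rt} is met and $Z$ is \emph{not} rigid. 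The hypothesis only excludes the even-with-a-two pattern for monomial~$0$, and nothing prevents monomial~$1$ from having it, so your claim that ``the hypotheses forbid condition~2)'' is false in general. The paper sidesteps this by taking $c=3$: the exponent of $T_{21}$ in $Z$ is then odd, monomial~$2$ can never satisfy the pattern, and since monomial~$0$ is excluded by hypothesis, condition~2) fails and $Z$ is rigid. Note also that one application of Lemma~\ref{tool} already gives $T_{2j}\in\mathrm{ML}(X)$ for \emph{all} $j\geq2$; no separate $c$ per index is needed.

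Two smaller remarks. For $\mathrm{ML}(X)\subseteq\KK[T_{22},\ldots,T_{2n_2}]$ the paper uses a transcendence-degree count rather than a leading-term computation: $\Ker\gamma_{ij}$ has transcendence degree $\dim X-1$ and already contains the $n_0+n_1+n_2-2$ algebraically independent elements $T_{ab}$ with $(a,b)\notin\{(2,1),(i,j)\}$, so it coincides with that polynomial ring; intersecting over all $(i,j)$ with $i\in\{0,1\}$ leaves exactly $\KK[T_{22},\ldots,T_{2n_2}]$. No separate ``$\gamma_{21}$-type'' derivation is needed, since each $\gamma_{ij}$ already moves $T_{21}$. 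Finally, your worry about non-homogeneous LNDs is not where the difficulty lies: the proof of Lemma~\ref{tool} already decomposes an arbitrary LND into $\mathfrak{h}_{1j}$-homogeneous parts, so once a rigid $Z$ is supplied its conclusion holds for every LND of $\KK[X]$.
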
 
\begin{proof}{\bf 1)}
Note that $X$ is an $n_2$-suspension over $\KK^{n_0+n_1}$. 

Consider
$$
Z=\VV\left(T_0^{l_0}+T_1^{l_1}+S^3T_{22}^{l_{22}}\ldots T_{2n_2}^{l_{2n_2}}\right).
$$
Theorem~\ref{rt} implies that $Z$ is rigid. Hence, Lemma~\ref{tool} implies that for all $2\leq j\leq n_2$ we have $T_{2j}\in\mathrm{ML}(X)$. 
Therefore, $\KK[T_{22}, \ldots, T_{2n_2}]\subset\mathrm{ML}(X)$.

In the proof of Lemma~\ref{nenul} we define an LND $\gamma_{ij}\colon\KK[X]\rightarrow\KK[X]$. We have 
$$\Ker\gamma_{ij}=\KK\left[T_{ab}\mid (a,b)\neq  (2,1), (a,b)\neq (i,j)\right].$$ 
Indeed, for any nonzero LND $\partial$ of $\KK[X]$ we have a formula for transcendence degree of its kernel
$$\mathrm{tr.deg}\,\Ker\partial=\mathrm{tr.deg}\,\KK[X]-1=n_0+n_1+n_2-2,$$ 
see for example \cite[Principle~11(e)]{Fr}. It is obvious that 
$$
\KK\left[T_{ab}\mid (a,b)\neq  (2,1), (a,b)\neq (i,j)\right]\subset\Ker\gamma_{ij}.
$$
If there is a polynomial depending on $T_{21}$ or $T_{ij}$ in $\Ker\gamma_{ij}$, then 
$$\mathrm{tr.deg}\Ker\gamma_{ij}>n_0+n_1+n_2-2,$$ this gives a contradiction.

We obtain 
$$\mathrm{ML}(X)\subset \bigcap_{i=1,2;\,j=1,\ldots,n_i}\Ker\gamma_{ij}=\KK[T_{22}, \ldots, T_{2n_2}].$$

{\bf 2)} follows from Lemma~\ref{nenul}.
\end{proof}

\begin{re}
$\mathrm{SAut}(X)$-orbits on an arbitrary affine variety can not always be separated by $\mathrm{ML}(X)$. They always can be separated by rational $\mathrm{SAut}(X)$-invariants $\mathrm{FML}(X)$, see \cite[Theorem~1.13]{AFKKZ}. For example, there are nonflexible affine threefolds with trivial Makar-Limanov invariant, see \cite[$\S$ 4.2]{Li}.  
\end{re}

\begin{prop}\label{mm}
Let 
$$
X=\VV\left(T_{01}^2T_{02}^{2m_{02}}\ldots T_{0n_0}^{2m_{0n_0}}+T_{11}^2\ldots T_{1k_1}^2 T_{1k_1+1}^{2m_{1k_1+1}}\ldots T_{1n_1}^{2m_{1n_1}}+T_2^{l_2}\right),
$$
where $n_0\geq 2$, $1\leq k_1\leq n_1$, for all $i$, $j$, $k$ we have $l_{2i}\geq 2$ and $m_{jk}\geq 2$, and
$$\{l_{21},\ldots,l_{2n_2}\}\neq \{2, 2a_1,\ldots, 2a_{n_2-1}\}. $$
Then 

1) $\mathrm{ML}(X)\supset\KK[T_{02},\ldots, T_{0n_0}]$,

2) If $k_1=1$, then 

\ \ \ a) $\mathrm{ML}(X)=\KK[T_{02},\ldots, T_{0n_0}, T_{12}, \ldots, T_{1n_1}],$

\ \ \ b) generic $\mathrm{SAut}(X)$-orbits can be separated by $\mathrm{ML}(X)$. 

3) If $k_1=n_1$, then 

\ \ \ a) $\mathrm{ML}(X)=\KK[T_{02},\ldots, T_{0n_0}],$

\ \ \ b) generic $\mathrm{SAut}(X)$-orbits can be separated by $\mathrm{ML}(X)$. 

\end{prop}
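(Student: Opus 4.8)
The plan is to prove each of the three parts by combining the suspension techniques of Section~\ref{tre} (to get the inclusion $\mathrm{ML}(X) \supset \KK[\ldots]$) with the explicit LNDs of Lemma~\ref{dve} and the transcendence-degree argument used in Proposition~\ref{m} (to get the reverse inclusion and the orbit separation). Throughout I would write $X$ as an $n_2$-suspension over $\KK^{n_0+n_1}$, namely $X \cong \mathrm{Susp}(\KK^{n_0+n_1}, f, l_{21},\ldots,l_{2n_2})$ with $f = -(T_0^{l_0}+T_1^{l_1})$, so that the results of Section~\ref{tre} apply.

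\medskip

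\textbf{Part 1.} To show $T_{0j} \in \mathrm{ML}(X)$ for $j = 2,\ldots,n_0$, I would mimic the use of Lemma~\ref{tool} from Proposition~\ref{m}: replace $T_{01}^2$ by $S^3$ (or more precisely compare $X = \mathrm{Susp}(Z', f', 2, \ldots)$ with $Z = \mathrm{Susp}(Z', f', 6, \ldots)$ where $Z'$ is the affine space with the $T_{01}$-coordinate removed), observe that the modified variety $Z$ is rigid by Theorem~\ref{rt} — here one must check that changing the exponent $2 \mapsto 6$ on $T_{01}$ lands one outside the nonrigid family \eqref{fff}, which holds because the hypothesis $\{l_{21},\ldots,l_{2n_2}\}\neq\{2,2a_1,\ldots,2a_{n_2-1}\}$ forces the third monomial to break the pattern of \eqref{fff} — and then invoke Lemma~\ref{tool} to conclude $T_{0j}\in\mathrm{ML}(X)$ for $j\geq 2$. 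Note $T_{01}$ itself need not be in $\mathrm{ML}(X)$, consistently with the statement.

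\medskip

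\textbf{Part 2 ($k_1=1$).} When $k_1 = 1$ the second monomial is $T_{11}^2 T_{12}^{2m_{12}}\cdots T_{1n_1}^{2m_{1n_1}}$, the symmetric analogue of the first, so by the same argument as in Part 1 (now perturbing the exponent on $T_{11}$) one gets $T_{1j} \in \mathrm{ML}(X)$ for $j = 2,\ldots,n_1$ as well, giving $\mathrm{ML}(X) \supset \KK[T_{02},\ldots,T_{0n_0},T_{12},\ldots,T_{1n_1}]$. For the reverse inclusion I would run the transcendence-degree argument of Proposition~\ref{m}: the LNDs $\delta_{i+},\delta_{i-}$ of Lemma~\ref{dve} (applied with $X$ put in the sign-normalized form $\VV(T_0^{l_0}-T_1^{l_1}-T_2^{l_2})$) each have kernel of transcendence degree $n_0+n_1+n_2-2$, which by \cite[Principle~11(e)]{Fr} forces the kernel of each to be exactly the polynomial ring in the variables it fixes; intersecting these kernels over all $i$ and over the two signs, together with $T_{01},T_{11}$ being moved by $\delta_{1\pm}$, kills everything except $T_{02},\ldots,T_{0n_0},T_{12},\ldots,T_{1n_1}$. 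Part 2b then follows directly from Lemma~\ref{dve}, which says $\mathrm{SAut}(X)$ acts transitively on each fiber $X(a_2,\ldots,a_{n_0},b_2,\ldots,b_{n_1})\cap X^{\mathrm{reg}}$, and these fibers are precisely the common level sets of the generators of $\mathrm{ML}(X)$.

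\medskip

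\textbf{Part 3 ($k_1=n_1$).} Here the second monomial is $T_{11}^2\cdots T_{1n_1}^2$ with \emph{all} exponents equal to $2$, so $X$ is also an $n_1$-suspension in the first group of variables with all weights $2$; by Lemma~\ref{nod} applied in that direction (or by a direct argument with the LNDs of Lemma~\ref{dve}), the $T_{1j}$ are \emph{not} in $\mathrm{ML}(X)$ — indeed one can move all of them — so the Makar-Limanov invariant shrinks to $\KK[T_{02},\ldots,T_{0n_0}]$. The reverse inclusion $\mathrm{ML}(X) \subset \KK[T_{02},\ldots,T_{0n_0}]$ again comes from intersecting kernels of the $\delta_{i\pm}$ together with the additional LNDs that move the $T_{1j}$; and 3b follows by combining Lemma~\ref{dve} with these extra $\GA$-actions to show $\mathrm{SAut}(X)$ is transitive on the common level set of $T_{02},\ldots,T_{0n_0}$ inside $X^{\mathrm{reg}}$.

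\medskip

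\textbf{Main obstacle.} The delicate point, in all three parts, is the reverse inclusion: I need to be sure I have enough explicit LNDs so that the intersection of their kernels is no larger than the claimed polynomial ring. For Part 1 and the ``$\supset$'' halves this is handled cleanly by Lemma~\ref{tool} plus Theorem~\ref{rt}, but for the ``$=$'' statements in 2a and 3a the transcendence-degree trick of Proposition~\ref{m} only works if the kernels of $\delta_{i+},\delta_{i-}$ (and the auxiliary LNDs in Part 3) genuinely generate, as an intersection, the right subring — equivalently, that no polynomial involving $T_{01}$, or $T_{11}$ when $k_1=n_1$, survives in all kernels simultaneously. Verifying that the collection $\{\delta_{i\pm}\}$ (resp. the larger collection in Part 3) is rich enough — in particular that $T_{11}$ is genuinely moved when $k_1 = n_1$ and that one has enough actions to reach every point of a generic fiber in 3b — is where the real work lies; the rest is bookkeeping with suspensions and the rigidity criterion already established in Theorem~\ref{rt}.
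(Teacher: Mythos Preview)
Your approach to Part~1 and to the inclusions $\mathrm{ML}(X)\supset\KK[\ldots]$ in Parts~2 and~3 is correct and essentially identical to the paper's (though note that you must view $X$ as an $n_0$-suspension over $\KK^{n_1+n_2}$, not the $n_2$-suspension you set up at the outset; you effectively correct this in your Part~1 discussion). The replacement exponent must be an even number $\geq 4$ --- the paper uses $S^{2u}$ with $u\geq 2$ --- so your ``more precisely $6$'' is right and the earlier $S^3$ is not.

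Where you diverge from the paper is in the reverse inclusions for 2(a) and 3(a). You propose to compute $\mathrm{ML}(X)$ by intersecting the kernels of the explicit LNDs $\delta_{i\pm}$, mimicking the argument of Proposition~\ref{m}. This is in principle possible but genuinely harder here than in Proposition~\ref{m}: there the kernels $\Ker\gamma_{ij}$ are plainly polynomial rings in a subset of the $T_{ab}$, whereas $\Ker\delta_{i\pm}$ also contains $\beta=\sqrt{T_0^{l_0}}+\sqrt{T_1^{l_1}}$ (resp.\ $\alpha$), so the kernels are not simply polynomial rings in coordinate variables and the intersection is not immediate. This is exactly the ``main obstacle'' you flag, and the paper avoids it entirely by reversing the logic: it first proves the orbit statement and \emph{deduces} the reverse inclusion from it. Concretely, any $g\in\mathrm{ML}(X)$ is $\mathrm{SAut}(X)$-invariant, hence constant on $\mathrm{SAut}(X)$-orbits; since Lemma~\ref{dve} shows that generic level sets of the claimed generators are single orbits, $g$ is constant on those level sets and so lies in the claimed polynomial ring. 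Thus 2(b) (resp.\ 3(b)) implies 2(a) (resp.\ 3(a)), not the other way around.

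For Part~3 specifically, the mechanism that makes generic fibers of $T_{02},\ldots,T_{0n_0}$ into single orbits is an \emph{iteration} of Lemma~\ref{dve}: since $k_1=n_1$ means every $T_{1j}$ has exponent $2$, one may permute which $T_{1j}$ plays the role of the distinguished variable in Lemma~\ref{dve}, and apply the lemma $n_1$ times to adjust the $T_{1j}$ one by one until they match the target point. Your ``extra $\GA$-actions'' are precisely these permuted instances of $\delta_{i\pm}$, and the paper makes this iteration explicit. Once you see that the orbit statements come first and the $\mathrm{ML}$-equalities follow, the obstacle you identified dissolves.
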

\begin{proof}
{\bf 1)} Note that $X$ is an $n_0$-suspension over $\KK^{n_1+n_2}$. Let us consider 
$$
Z=\VV\left(S^{2u}T_{02}^{2m_{02}}\ldots T_{0n_0}^{2m_{0n_0}}+T_{11}^2\ldots T_{1k_1}^2 T_{1k_1+1}^{2m_{1k_1+1}}\ldots T_{1n_1}^{2m_{1n_1}}+T_2^{l_2}\right),
$$ 
where $u\geq 2$.
Theorem~\ref{rt} implies that $Z$ is rigid. It follows from Lemma~\ref{tool} that for all $j\geq 2$ we have $T_{0j}\in \mathrm{ML}(X)$.

{\bf 2)} By 1) we have $\mathrm{ML}(X)\supset\KK[T_{02},\ldots, T_{0n_0}, T_{12}, \ldots, T_{1n_1}]$. Lemma~\ref{dve} implies that a generic variety 
$$\VV\left(T_{02}-a_2,\ldots, T_{0n_0}-a_{n_0}, T_{12}-b_1, \ldots, T_{1n_1}-b_{n_1}\right)\cap X^{\mathrm{reg}}$$ 
is one $\mathrm{SAut}(X)$-orbit. This implies 2(a) and 2(b).  

{\bf 3)} By 1) we have $\mathrm{ML}(X)\supset\KK[T_{02},\ldots, T_{0n_0}]$. Let 
$$
U=\{x\in X\mid \forall i,j\colon T_{ij}(x)\neq 0\}.
$$
It is clear that $U\subset X$ is open. Then $V=\mathrm{SAut}(X)(U)\subset X$ is open.

Let us prove that  every nonempty set
$$W(a_1,\ldots,a_{n_0})=\VV\left(T_{02}-a_2,\ldots, T_{0n_0}-a_{n_0}\right)\cap V$$ 
is one $\mathrm{SAut}(X)$-orbit. 

Since $\mathrm{ML}(X)\supset\KK[T_{02},\ldots, T_{0n_0}]$, if $W(a_1,\ldots,a_{n_0})\neq\varnothing$, then $W(a_1,\ldots,a_{n_0})\cap U\neq\varnothing$. 
Let $P, Q\in W(a_1,\ldots,a_{n_0})$. Then there are $\psi_P$ and $\psi_Q$ in $\mathrm{SAut}(X)$ such that $P'=\psi_P(P)$ and $Q'=\psi_Q(Q)$ are in 
$W(a_1,\ldots,a_{n_0})\cap U$. 

By Lemma~\ref{dve} there is $\psi_1\in\mathrm{SAut}(X)$ such that 
$$
T_{11}(\psi_1(P'))=T_{11}(Q'),\qquad \forall\, i\geq 2\colon T_{1i}(\psi_1(P'))=T_{1i}(P').
$$

Applying again Lemma~\ref{dve} we obtain $\psi_2\in\mathrm{SAut}(X)$ such that 
$$
T_{11}(\psi_2\circ\psi_1(P'))=T_{11}(Q'),\qquad T_{12}(\psi_2\circ\psi_1(P'))=T_{12}(Q'),
$$
$$ 
\forall\, i\geq 3\colon T_{1i}(\psi_2\circ\psi_1(P'))=T_{1i}(P').
$$
We proceed in such a way until we obtain $\psi \in\mathrm{SAut}(X)$ such that for all $i=1,\ldots,n_1$ we have $T_{1i}(\psi(P'))=T_{1i}(Q')$. Since for all $j\geq 2$ we have $T_{0j}(\psi(P'))=T_{0j}(P')=T_{0j}(Q')$, Lemma~\ref{dve} implies existing of $\tau\in\mathrm{SAut}(X)$ such that $(\tau\circ\psi)(P')=Q'$. This implies that $W(a_1,\ldots,a_{n_0})$ is one $\mathrm{SAut}(X)$-orbit. This proves 3(a) and 3(b).

\end{proof}


\begin{thebibliography}{99}
%
\bibitem{Ar}
I. Arzhantsev. {\it On rigidity of factorial trinomial hypersurfaces}. Int. J. Algebra Comput. {\bf 26} (2016) no.~5, 1061-1070.

%
\bibitem{ADHL} I. Arzhantsev, U. Derenthal, J. Hausen, and A. Laface. {\it Cox rings}. Cambridge Studies in Adv. Math. {\bf 144}, Cambridge University Press, New York, 2015.
%
\bibitem{AG} I. Arzhantsev and S. Gaifullin. {\it The automorphism group of a rigid affine variety}. Math. Nachr. {\bf 290} (2017) no. 5-6, 662-671.
%
\bibitem{AFKKZ}
I. Arzhantsev, H. Flenner, S. Kaliman, F. Kutzschebauch, and M. Zaidenberg. {\it Flexible varieties and automorphism groups}. Duke Math. J. {\bf 162} (2013) no. 4, 767-823.
%

\bibitem{AKZ}
I. Arzhantsev, K. Kuyumzhiyan, and M. Zaidenberg. {\it Flag varieties, toric varieties, and
suspensions: three instances of infinite transitivity}. Sbornik: Math. {\bf 203} (2012) no.~7, 923-949.
%
\bibitem{CM}
A. Crachiola and S. Maubach. {\it Rigid rings and Makar-Limanov techniques}, Commun. Algebra {\bf 41} (2013), no.~11, 4248-4266.
%

\bibitem{G}
S. Gaifullin. {\it Automorphisms of Danielewski varieties}. arXiv:1709.09237. 
%
\bibitem{GSh}
S. Gaifullin, A. Shafarevich. {\it Flexibility of normal affine horospherical varieties}, Proc. Amer. Math. Soc. to appear; see also arXiv:1805.05024.
%
\bibitem{GZ} S. Gaifullin, Yu. Zaitseva. {\it On homogeneous locally nilpotent derivations of trinomial algebras}, J. Alg. Appl. to appear; see also arXiv:1810.11839.
%
\bibitem{FM}
D. Finston and S. Maubach. {\it Constructing (almost) rigid rings and a UFD having infinitely generated Derksen and Makar-Limanov invariant}. Canad. Math. Bull. {\bf 53} (2010) 77-86
%
\bibitem{FZ}
H. Flenner and M. Zaidenberg. {\it On the uniqueness of $\CC^*$-actions on affine surfaces}.
In: Affine Algebraic Geometry, Contemp. Math. {\bf 369}, Amer. Math. Soc., Providence, RI (2005) 97-111.

%
\bibitem{Fr}
G. Freudenburg. {\it Algebraic theory of locally nilpotent derivations}. Encyclopaedia Math. Sci., vol. {\bf 136}, Springer, Berlin, 2006.
%
\bibitem{FMJ}
G. Freudenburg and L. Moser-Jauslin, {\it Locally nilpotent derivations of rings with roots adjoined}, Michigan Math. J. {\bf 62} (2013) no. 2, 227--258.

%
\bibitem{HH}
J. Hausen and E. Herppich. {\it Factorially graded rings of complexity one}. In: Torsors, \'etale
homotopy and applications to rational points. London Math. Soc. Lecture Note Series {\bf 405} (2013) 414-428.
%

\bibitem{HW}
J. Hausen and M. Wrobel. {\it Non-complete rational T-varieties of complexity one}. Math. Nachr. {\bf 290} (2017) no. 5-6, 815-826.
%
\bibitem{KZ}
S. Kaliman and M. Zaidenberg. {\it Miyanishi's characterization of the affine 3-space does
not hold in higher dimensions}. Ann. Inst. Fourier {\bf 50} (2000) no.~6, 1649-1669.
%
\bibitem{Li}
A. Liendo. {\it Affine T -varieties of complexity one and locally nilpotent derivations}. Transform.
Groups {\bf 15} (2010) 389-425.
%
\bibitem{LML}
L. Makar-Limanov. {\it Again $x + x^2y + z^2 + t^3 = 0$}, Contemp. Math., {\bf 369}, Amer. Math. Soc.,
Providence, RI (2005) 177-182.
%
\bibitem{Pe}
A. Perepechko. {\it Flexibility of affine cones over del Pezzo surfaces of degree 4 and 5}, Func. Anal. Appl., {\bf 47} (2013) no. 4, 284-289.
%
\bibitem{Re}
R. Rentschler. {\it Operations du groupe additif sur le plane affine}, C. R. Acad. Sci. {\bf 267} (1968) 384-387.
%
\bibitem{Z} Yu. Zaitseva. {\it Homogeneous locally nilpotent derivations of non-factorial trinomial algebras}, Mat. Zametki, to appear; see also  arXiv:1710.10610.

\end{thebibliography}
\end{document}